\theoremstyle{definition}
\newtheorem{thm}{Theorem}[subsection]
\newtheorem{prop}[thm]{Proposition}
\newtheorem{cor}[thm]{Corollary}
\newtheorem{con}[thm]{Conjecture}
\newtheorem{lem}[thm]{Lemma}
\newtheorem{ex}[thm]{Example}
\numberwithin{equation}{subsection}
\definecolor{mainCol}{RGB}{127, 49, 79} 
\definecolor{TextCol}{rgb}{1.00,1.00,1.00} 
\definecolor{BoxCol}{RGB}{50,31,19}
\definecolor{DarkSlateBlue}{RGB}{72,61,139}
\definecolor{DarkSlateGray}{RGB}{47,79,79}
\definecolor{Aquamarine1}{RGB}{127,255,212}
\definecolor{BlueViolet}{RGB}{138,43,226}
\definecolor{AquaViolet}{RGB}{83,107,241}
\definecolor{Aquamarine1Dual}{RGB}{255,127,170}
\definecolor{Black}{RGB}{0,0,0}
\definecolor{A}{RGB}{44,245,135}
\definecolor{SpringGreen}{RGB}{0,255,128}
\definecolor{SpringGreen1}{RGB}{128,0,255}
\definecolor{SpringGreen2}{RGB}{255,128,0}
\definecolor{Clayton}{RGB}{92,148,197}
\definecolor{amarelo}{RGB}{255,255,0}
\definecolor{VerdeCiano}{RGB}{0,255,161}
\definecolor{Ciano}{RGB}{0,255,255}
\def\ord#1^#2{#1$^{\text{#2}}$}
\def\lie#1{\mathfrak{#1}}
\def\tlie#1{\tilde{\mathfrak{#1}}}
\def\hlie#1{\hat{\mathfrak{#1}}}
\def\uqr#1^#2{\text{$U_q^{#2}(\lie #1)$}}
\def\uqhr#1^#2{\text{$U_q^{#2}(\hlie #1)$}}
\def\us#1^#2{\text{$U_{\xi}^{#2}(\lie #1)$}}
\def\ush#1^#2{\text{$U_{\xi}^{#2}(\hlie #1)$}}
\def\dus#1^#2{\text{$\dot{U}_{\xi}^{#2}(\lie #1)$}}
\def\dush#1^#2{\text{$\dot{U}_{\xi}^{#2}(\hlie #1)$}}
\def\wtl{{\rm wt}_\ell}
\def\wt{{\rm wt}}
\def\supp{{\rm supp}}
\def\ch{{\rm ch}}
\def\qch{{\rm qch}}
\def\opl_#1^#2{\text{\scriptsize$\bigoplus\limits_{\text{\normalsize$#1$}}^{\text{\normalsize$#2$}}$}}
\def\otm_#1^#2{\text{\scriptsize$\bigotimes\limits_{\text{\footnotesize$#1$}}^{\text{\footnotesize$#2$}}$}}
\def\bs#1{\boldsymbol{#1}}
\def\endd{\hfill$\diamond$}
\def\ffbox#1{\setbox9=\hbox{$\scriptstyle\overline{1}$}\framebox[0.55cm][c]{\rule{0mm}{\ht9}${\scriptstyle #1}$}}
\begin{document}

\title[Three-vertex prime graphs and reality of trees]{Three-vertex prime graphs and reality of trees}
\author[A. Moura and C. Silva]{Adriano Moura and Clayton  Silva}

\address{Departamento de Matemática, Universidade Estadual de Campinas, Campinas - SP - Brazil, 13083-859.}
\email{aamoura@ime.unicamp.br}
\email{ccris22@gmail.com}

\thanks{This work was developed as part of the Ph.D. project of the second author, which was supported by a PICME grant.
	The work of the first author was partially supported by CNPq grant 304261/2017-3 and Fapesp grant 2018/23690-6. }

\begin{abstract}
	We continue the study of prime simple modules for quantum affine algebras from the perspective of $q$-fatorization graphs.
	In this paper we establish several properties related to simple modules whose  $q$-factorization graphs are afforded by trees. The two most important of them are proved for type $A$. The first completes the classification of the prime simple modules with three $q$-factors by giving a precise criterion for the primality of a $3$-vertex line which is not totally ordered. Using a very special case of this criterion, we then show that a simple module whose $q$-factorization graph is afforded by an arbitrary tree  is real. Indeed, the proof of the latter works for all types, provided the aforementioned special case is settled in general.
\end{abstract}

\maketitle

\section{Introduction}

This paper is a continuation of \cite{mosi}, where we introduced the combinatorial notion of a $q$-fatorization graph intended as a tool to study and express results related to the  classification of prime simple  modules for quantum affine algebras. While the main result of \cite{mosi} focused on graphs whose vertex sets are totally ordered, the present paper goes in the opposite direction. Most of this paper is dedicated to proving a first step towards describing a set of necessary and sufficient conditions for the primality of simple modules whose  $q$-fatorization graph  is afforded by a tree. The only type of tree which is also totally ordered are the monotinic lines
\begin{equation*}
	\begin{tikzcd}
		\circ \arrow[r] & \circ \arrow[r] & \cdots 
	\end{tikzcd} 
\end{equation*}
which are prime by \cite[Theorem 3.5.4]{mosi}. This first step is \Cref{t:3lineprime}, which completes the proposed goal in the case that the underlying simple Lie algebra is of type $A$ and the $q$-factorization graph has three vertices. Its proof occupies about half of this paper. The present argument can be used to study all other types, but it heavily explores the precise description of the sets $\mathscr R_{i,j}^{r,s}$ which, by definition, encode the answer for graphs with two vertices (their definition is reviewed in \Cref{ss:psm}). Hence, the study of other types using similar arguments requires a case-by-case analysis. 

In the seminal paper \cite{hele:cluster},  Hernandez and Leclerc  showed that the finite-dimensional representation theory of quantum affine algebras and cluster algebras are deeply interconnected. In particular, the notion of cluster variables gave rise to the notion of real prime simple modules. A real module is a simple module whose tensor square is also simple. Thus, beside classifying the prime simple modules, another important task is that of classifying the real ones. As an application of a very particular case of \Cref{t:3lineprime}, we prove \Cref{t:realtree}, that shows that every simple module whose $q$-factorization graph is afforded by a tree is real. This particular case (\Cref{c:3aline}) is expected to be true for all types and  can possibly be proved in a simpler manner with different techniques than those employed here (we have used no cluster algebra argument and made very rudimentary use of qcharacters). If that is done, the present proof of \Cref{t:realtree} works, exactly as written here, for all types.

Beside these two main results, we also prove a few general criteria for deciding the reducibility or simplicity of a tensor product of two simple modules for which the $q$-factorization graph of the product of the two Drinfeld polynomials is a tree,  such as Propositions \ref{p:critsimp} and \ref{c:nonadcjprim}. We also give a few illustrative examples of the usability of the several criteria we have established here as well as in \cite[Section 4]{mosi}. For instance, it was shown in \cite{mosi} that every connected subgraph of a prime tree is also prime. The converse of this is false and, in \Cref{ex:cosubpt=>p},  we give an example of a non-prime tree with four vertices, all of whose proper connected subgraphs are prime. On the other hand, in \Cref{ex:cesubpt=>p}, we give an example of a $q$-factorization graph afforded by a $4$-cycle which is prime but contains connected three-vertex subgraphs which are not prime. Note that we are able to check the primality of the graphs in these examples using our set of criteria, even though they are not covered by the main results from \cite{mosi} nor by the ones from the present paper.

The paper is organized as follows. In \Cref{s:main}, we review the basic terminology and notation, state the main results, and develop the aforementioned examples. In \Cref{s:crit}, we review some further background and prove Propositions \ref{p:critsimp} and \ref{c:nonadcjprim}. \Cref{s:3primes} is dedicated to the proofs of Theorems \ref{t:3lineprime} and \ref{t:realtree}, starting with  a brief review of short exact sequences describing tensor products of fundamental modules for type $A$ in \Cref{soctpkr} (the only part of the paper using qcharacters). \Cref{ss:tpfund} prepares the ground for the proof of \Cref{t:3lineprime}, starting from a broader setup (for all types) by discussing  a few initial steps for obtaining  reducibility criteria for the tensor product of a Kirillov-Reshetikhin  module with a general simple module. Lemmas \ref{l:disconect} and \ref{l:j'inJ} give preliminary answers in this direction. The lengthy \Cref{ss:3linecut} contains the proof of \Cref{t:3lineprime}, while   \Cref{t:realtree} is proved in the final subsection.

\section{Basic Background and Main Results}\label{s:main}

Throughout the paper, let $\mathbb C$ and  $\mathbb Z$ denote the sets of complex numbers and integers, respectively. Let also $\mathbb Z_{\ge m} ,\mathbb Z_{< m}$, etc. denote the obvious subsets of $\mathbb Z$. Given a ring $\mathbb A$, the underlying multiplicative group of units is denoted by $\mathbb A^\times$. 
The symbol $\cong$ means ``isomorphic to''. We shall use the symbol $\diamond$ to mark the end of remarks, examples, and statements of results whose proofs are postponed. The symbol \qedsymbol\ will mark the end of proofs as well as of statements whose proofs are omitted.

\subsection{Quantum Algebras and Their Finite-Dimensional Representations}\label{ss:clalg}
Since this paper should be regarded as a continuation of \cite{mosi}, in order to avoid excessive repetition of background, we kindly ask the reader to refer to Sections 2.3, 2.4, and 3.1 of that paper. Let us fix some additional notation. If $J\subseteq I$, we let $w_0^J$ denote the longest element of the Weyl group of the diagram subalgebra $\lie g_J\subseteq \lie g$, regarded as a subgroup of $\mathcal W$ in the obvious way. Moreover, 
we introduce the following notation which will be often used in the main proof of this paper.
Given $i,j,k\in I$, set
\begin{equation*}
	d_{i,j}^k = \frac{d(k,i)+d(k,j)-d(i,j)}{2}
\end{equation*}
and note
\begin{equation}\label{dikj}
	d_{i,j}^k =  \begin{cases}
		0, &\text{if } k\in[i,j];\\ d(k,i), &\text{if } i\in[k,j];\\ d(k,j),& \text{if } j\in [k,i].
	\end{cases}
\end{equation}
Also,
\begin{equation}\label{eq:cotd}
	d_{i,j}^{k}\leq\operatorname{min}\{d(k,i),d(k,j)\} \qquad\text{and}\qquad d_{i,j}^k+d_{k,j}^i=d(k,i).
\end{equation}

\subsection{Simple Prime Modules and $q$-Factors}\label{ss:psm}
Although this subsection is contained in \cite[Section 3.3]{mosi}, we chose to partially reproduce it here for easy of referencing later on.

Given $(i,r),(j,s)\in I\times  \mathbb Z_{>0}$, there exists a finite set $\mathscr R_{i,j}^{r,s} \subseteq \mathbb Z_{>0}$
such that
\begin{equation}\label{defredset}
	L_q(\bs\omega_{i,a,r})\otimes L_q(\bs\omega_{j,b,s}) \text{ is reducible}\qquad\Leftrightarrow\qquad \frac{a}{b} = q^m \text{ with } |m|\in \mathscr R_{i,j}^{r,s}.
\end{equation}
Moreover, in that case,
\begin{equation}\label{e:krhwtp}
	L_q(\bs\omega_{i,a,r})\otimes L_q(\bs\omega_{j,b,s}) \text{ is  highest-$\ell$-weight}\qquad\Leftrightarrow\qquad m>0.
\end{equation}
If $\lie g$ is of type $A$  and $i,j\in I, r,s\in\mathbb Z_{>0}$, we have 
\begin{equation}\label{t:krredsets}
		\mathscr R_{i,j}^{r,s} = \{r+s+d(i,j)-2p: - d([i,j],\partial I)\le p<\min\{r,s\}  \}. 
\end{equation}
Given a connected subdiagram $J$ such that $[i,j]\subseteq J$, let $\mathscr R_{i,j,J}^{r,s}$ be determined by 
\begin{equation*}
	V_q((\bs\omega_{i,a,r})_J)\otimes V_q((\bs\omega_{j,b,s})_J)  \text{ is reducible}\qquad\Leftrightarrow\qquad \frac{a}{b} = q^m \text{ with } |m|\in \mathscr R_{i,j,J}^{r,s}.
\end{equation*}
\Cref{c:sJs} below implies
\begin{equation}\label{incredsets}
	\mathscr R_{i,j,J}^{r,s}\subseteq \mathscr R_{i,j,K}^{r,s} \quad\text{if}\quad J\subseteq K.
\end{equation}
Also, setting
\begin{equation}\label{e:redsetsl2}
	\mathscr R_{i}^{r,s} =  \mathscr R_{i,i,\{i\}}^{r,s},
\end{equation}
we have
\begin{equation}\label{c:krtpsl2}
	\mathscr R_{i}^{r,s}=\{d_i(r+s-2p):0\le p<\min\{r,s\}\} \quad\text{for every}\quad i\in I,\ r,s\in\mathbb Z_{>0}.
\end{equation}

\begin{prop}[{\cite[Proposition 3.3.4]{mosi}}]\label{p:qftp}
	If $\bs\pi,\bs\varpi\in\mathcal P^+$ are such that $L_q(\bs\pi)\otimes L_q(\bs\varpi)$ is simple, then they have dissociate $q$-factorizations.\qed
\end{prop}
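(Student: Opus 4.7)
The plan is to prove the contrapositive. Write the $q$-factorizations as $\bs\pi = \prod_i \bs\omega^{(i)}$ and $\bs\varpi = \prod_j \bs\varpi^{(j)}$ and suppose they are not dissociate, so there is a pair $(i_0,j_0)$ for which $L_q(\bs\omega^{(i_0)}) \otimes L_q(\bs\varpi^{(j_0)})$ is reducible; the task is then to show that $L_q(\bs\pi) \otimes L_q(\bs\varpi)$ is reducible as well.

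The conceptual idea is that every $q$-factor of $\bs\pi$ (respectively $\bs\varpi$) behaves as a ``tensor constituent'' of $L_q(\bs\pi)$ (respectively $L_q(\bs\varpi)$): by the standard cyclicity results of Chari--Pressley and Kashiwara that are recalled in the background sections of \cite{mosi}, $L_q(\bs\pi)$ arises as the head of an appropriately ordered tensor product of the $L_q(\bs\omega^{(i)})$'s, and likewise for $L_q(\bs\varpi)$. Consequently $L_q(\bs\pi) \otimes L_q(\bs\varpi)$ is the head of a single ordered tensor product of all the Kirillov--Reshetikhin factors $L_q(\bs\omega^{(i)})$ and $L_q(\bs\varpi^{(j)})$ combined.

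One then invokes the standard ``peeling'' principle for simple tensor products: if an ordered tensor product of simple modules is itself simple, then any sub-tensor-product of the factors (in particular, any pairwise one) is simple, and reordering the tensor factors yields isomorphic modules. Isolating the pair $L_q(\bs\omega^{(i_0)}) \otimes L_q(\bs\varpi^{(j_0)})$ then contradicts the hypothesis and finishes the argument.

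The main obstacle I anticipate is making this ``peeling'' step fully rigorous when $L_q(\bs\pi)$ and $L_q(\bs\varpi)$ are merely subquotients of (and not isomorphic to) the tensor products of their $q$-factors, so that simplicity of the outer tensor product does not, a priori, pass down through a subquotient relation. This is where one invokes the theory of normalized $R$-matrices (or equivalently a qcharacter/$\ell$-weight counting argument), which shows that recovering the simple head $L_q(\bs\pi)\otimes L_q(\bs\varpi)$ as a quotient of both orderings of the full Kirillov--Reshetikhin tensor product forces every intermediate pair of $q$-factors to commute simply, thereby extracting pairwise simplicity from the global simplicity.
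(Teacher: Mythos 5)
There is a genuine gap, and it starts with the very first sentence: you have misread what ``dissociate $q$-factorizations'' means. It does \emph{not} mean that $L_q(\bs\omega)\otimes L_q(\bs\omega')$ is simple for every $q$-factor $\bs\omega$ of $\bs\pi$ and $\bs\omega'$ of $\bs\varpi$; it means that the $q$-factorization of the product $\bs\pi\bs\varpi$ is the concatenation of the two $q$-factorizations, i.e.\ no same-colored pair $\bs\omega_{i,a,r}\mid\bs\pi$, $\bs\omega_{i,b,s}\mid\bs\varpi$ with $a/b=q^m$ has $|m|\in\mathscr R_i^{r,s}$ (this is condition \eqref{e:areqfact}). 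Cross pairs of $q$-factors with reducible tensor product are perfectly compatible with dissociateness --- they are exactly the arrows joining $G(\bs\pi)$ to $G(\bs\varpi)$ inside $G(\bs\pi\bs\varpi)$, which \Cref{p:critsimp}, \Cref{c:source<-sink} and \Cref{p:critred2} all presuppose can exist under the dissociate hypothesis. Worse, the statement you set out to prove is false: in \eqref{e:cesubpt=>ps} of \Cref{ex:cesubpt=>p}, $L_q(\bs{\omega}_{2,aq^4,2}\bs{\omega}_{1,a})\otimes L_q(\bs{\omega}_{2,aq^3})$ is simple even though the cross pair $L_q(\bs{\omega}_{1,a})\otimes L_q(\bs{\omega}_{2,aq^3})$ is reducible (it is an arrow of the graph there). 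Consequently the ``peeling'' principle you invoke --- that simplicity of the simple head of an ordered tensor product of Kirillov--Reshetikhin factors forces pairwise simplicity of all those factors --- cannot be made rigorous, since it is essentially this false statement; the obstacle you flag at the end is not a technical nuisance but the point where the argument breaks.

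The intended argument (the proof is in \cite{mosi}; this paper only quotes the result) is a rank-one restriction. If the factorizations are not dissociate, pick a color $i$ and $q$-factors $\bs\omega_{i,a,r}\mid\bs\pi$, $\bs\omega_{i,b,s}\mid\bs\varpi$ with $a/b=q^m$ and $|m|\in\mathscr R_i^{r,s}$. By \Cref{c:sJs} with $J=\{i\}$, simplicity of $L_q(\bs\pi)\otimes L_q(\bs\varpi)$ would force simplicity of $L_q(\bs\pi_{\{i\}})\otimes L_q(\bs\varpi_{\{i\}})$ over $U_q(\tlie g)_{\{i\}}\cong U_{q_i}(\hlie{sl}_2)$. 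But the defining property of a $q$-factorization guarantees that the color-$i$ strings of $\bs\pi$ (resp.\ $\bs\varpi$) are pairwise in general position, so each of $L_q(\bs\pi_{\{i\}})$ and $L_q(\bs\varpi_{\{i\}})$ is the tensor product of the evaluation modules attached to its strings; by \eqref{c:krtpsl2} the offending cross pair of strings is in special position, so the restricted tensor product is reducible --- a contradiction. You should redo the argument along these lines.
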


\subsection{Factorization Graphs}\label{ss:prefact}
We again ask the reader to refer to Sections 2.1 and 2.2 of \cite{mosi} for the basic notation regarding graph theory in order to avoid excessive repetition of background. 
For easy of referencing, we recall the main parts of \cite[Sections 2.5 and 3.4]{mosi}, where the notion of $q$-factorization graphs was introduced.

A pre-factorization graph is a directed graph $G$ equipped with three maps
\begin{equation*}
	c:\mathcal V\to I, \quad \lambda:\mathcal V\to \mathbb Z_{>0}, \quad \epsilon:\mathcal A\to \mathbb Z_{>0},
\end{equation*}
called the coloring, the weight, and the exponent, respectively,  and $\epsilon$ satisfies the following compatibility condition:
\begin{equation}\label{e:expcomp}
	\epsilon_\rho = \epsilon_{\rho'} \qquad\text{for all}\qquad \rho,\rho'\in\mathscr P_{v,v'}, v,v'\in \mathcal V.
\end{equation}
Here, $\mathscr P_{v,v'}$ is the set of paths from $v$ to $v'$ and 
\begin{equation*}
	\epsilon_\rho = \sum_{j=1}^m s_j\epsilon(a_j),
\end{equation*}
where $\rho=e_1\cdots e_m$ has signature  $\sigma_\rho=(s_1,\dots,s_m)$ and $e_j$ is the edge associated to the arrow $a_j$.
Set also
\begin{equation}\label{e:incpaths}
	\mathscr P^+_G = \{\rho\in\mathscr P_G:\epsilon_\rho>0\} \quad\text{and}\quad \mathscr P^-_G = \{\rho\in\mathscr P_G:\epsilon_\rho<0\}.
\end{equation}
The structures maps of  a pre-factorization graph can be locally represented by a picture of the form
	\begin{tikzcd}
		\stackrel{r}{i} \arrow[r,"m"] & \stackrel{s}{j}
		\end{tikzcd} 
where $i$ and $j$ are the colors at the corresponding vertices, $r$ and $s$ are their associated weights and $m$ is the exponent associated to the given arrow. Condition \eqref{e:expcomp} implies a pre-factorization graph contains no oriented cycles.
In particular, the set $\mathcal A$ induces a partial order on $\mathcal V$  by the transitive extension of the strict relation
\begin{equation*}
	h_a\prec t_a \quad\text{for}\quad a\in\mathcal A.
\end{equation*}

Recall  \eqref{defredset} and \eqref{e:redsetsl2}. A pre-factorization graph $G$ is said to be a $q$-factorization graph if,
for every $i\in I$, 
\begin{equation}\label{e:areqfact}
	v,v'\in \mathcal V_i, \ \rho\in\mathscr P_{v,v'}  \qquad\Rightarrow\qquad |\epsilon_\rho| \notin \mathscr R_i^{\lambda(v),\lambda(v')}
\end{equation}
and
\begin{equation}\label{e:notherarrow}
	\rho\in\mathscr P_{v,v'}\cap\mathscr P_G^+  \quad\text{with}\quad \epsilon_\rho \in \mathscr R_{c(v),c(v')}^{\lambda(v),\lambda(v')} \qquad\Rightarrow\qquad (v',v)\in\mathcal A.
\end{equation}
We  refer to a pre-factorization graph satisfying \eqref{e:notherarrow} as a pseudo $q$-factorization graph.

If $G$ is a connected pre-factorization graph, for each choice of $(v_0,a)\in \mathcal V\times\mathbb F^\times$, we can associate a Drinfeld polynomial by 
\begin{equation}\label{e:polytograph}
	\bs\pi_{G,v_0,a}=\prod_{v\in \mathcal V} \bs\omega_{c(v),a_v,\lambda(v)},
\end{equation}
where $a_{v_0}=a$ and $a_{v}=aq^{\epsilon_\rho}$ if $\rho\in\mathscr P_{v_0,v}$.
Condition \eqref{e:expcomp} guarantees this is well-defined. Conversely,  any pseudo $q$-factorization  of a Drinfeld polynomial $\bs\pi$ gives rise  to a pseudo $q$-factorization graph which is a $q$-factorization graph if and only if it is the $q$-factorization of $\bs\pi$. The vertex set $\mathcal V$ is the multiset of (pseudo) $q$-factors, the coloring is determined by 
\begin{equation*}
	c^{-1}(\{i\}) := \mathcal V_i := \{\bs\omega\in \mathcal V: \supp(\bs\omega)=\{i\}\}, \quad i\in I,
\end{equation*} 
and the weight map $\lambda:\mathcal V\to\mathbb Z_{>0}$ is defined by
\begin{equation}
	\lambda(\bs\omega) = \wt(\bs\omega)(h_i) \quad\text{for all}\quad \bs\omega\in \mathcal V_i.
\end{equation}
In particular,
\begin{equation}
	\sum_{i\in I}\sum_{\bs\omega\in \mathcal V_i} \lambda(\bs\omega)\omega_i = \wt(\bs\pi).
\end{equation}
The set of arrows $\mathcal A=\mathcal A(\bs\pi)$ is defined as the set of ordered pairs of 
$q$-factors, say $(\bs\omega_{i,a,r},\bs\omega_{j,b,s})$, such that
\begin{equation}\label{e:arrowsdefp}
	a=bq^m \quad\text{for some}\quad m\in \mathscr R_{i,j}^{r,s}.
\end{equation} 
This is equivalent to saying that $L_q(\bs\omega_{i,a,r})\otimes L_q(\bs\omega_{j,b,s})$
is reducible and highest-$\ell$-weight. In the case of the actual $q$-factorization, we necessarily have $m\notin \mathscr R_i^{r,s}$ when $i=j$. The value of the exponent $\epsilon:\mathcal A\to\mathbb Z_{>0}$ at an arrow satisfying \eqref{e:arrowsdefp} is set to be $m$.  We refer to $G$ as a pseudo $q$-factorization graph over $\bs\pi$. In the case this construction was performed using the $q$-factorization of $\bs\pi$, then $G$ is called the  $q$-factorization graph of $\bs\pi$ and it is denoted by $G(\bs\pi)$.

\begin{prop}[{\cite[Proposition 3.4.1]{mosi}}]\label{p:primeconect}
	Let $\bs\pi\in\mathcal P^+$. If $G_1, \cdots, G_k$ are the connected components of $G(\bs\pi)$ and $\bs\pi^{(j)}\in\mathcal P^+, 1\le j\le k$, are such that $\bs\pi = \prod_{j=1}^k \bs\pi^{(j)}$ and $G_j = G(\bs\pi^{(j)})$, then
	\begin{equation*}
		L_q(\bs\pi)\cong L_q(\bs\pi^{(1)})\otimes\cdots\otimes L_q(\bs\pi^{(k)}).
	\end{equation*}  
	In particular,  $G(\bs\pi)$ is connected if $L_q(\bs\pi)$ is prime.	
	\qed
\end{prop}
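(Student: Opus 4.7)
The second assertion is immediate from the first, since a nontrivial tensor product decomposition of $L_q(\bs\pi)$ would contradict primality. So the task is to establish the displayed isomorphism. I would proceed by induction on the number $k$ of connected components. The case $k=1$ is vacuous, and the inductive step reduces to the case $k=2$ by grouping $G_2,\dots,G_k$ into a single Drinfeld polynomial (whose associated $q$-factorization graph remains disconnected from $G_1$). Thus the core claim is: given $\bs\mu,\bs\nu\in\mathcal P^+$ such that $G(\bs\mu)$ and $G(\bs\nu)$ are distinct connected components of $G(\bs\mu\bs\nu)$, one has $L_q(\bs\mu)\otimes L_q(\bs\nu)\cong L_q(\bs\mu\bs\nu)$.

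The hypothesis translates, via the definition \eqref{e:arrowsdefp} of the arrow set together with \eqref{defredset}, into the statement that $L_q(\bs\omega)\otimes L_q(\bs\omega')$ is simple for every pair of $q$-factors $\bs\omega$ of $\bs\mu$ and $\bs\omega'$ of $\bs\nu$. I would now pick a total order of the $q$-factors of $\bs\mu$ extending the internal arrow partial order of $G(\bs\mu)$, and similarly for $\bs\nu$, and concatenate them into a single total order $\bs\omega_1,\dots,\bs\omega_p,\bs\omega'_1,\dots,\bs\omega'_q$ on $\mathcal V(\bs\mu\bs\nu)$. Along such an order the iterated tensor product of the corresponding Kirillov-Reshetikhin modules is highest-$\ell$-weight with highest $\ell$-weight $\bs\mu\bs\nu$ (within each block this uses the $R$-matrix/cyclicity result behind highest-$\ell$-weight generation, and across the two blocks it uses the pairwise simplicity, which in particular makes the tensor product highest-$\ell$-weight). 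Since $L_q(\bs\mu)$ and $L_q(\bs\nu)$ are the simple heads of their respective block-ordered iterated tensor products, one deduces a surjection $L_q(\bs\mu)\otimes L_q(\bs\nu)\twoheadrightarrow L_q(\bs\mu\bs\nu)$.

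To promote this surjection to an isomorphism I would run the symmetric argument with the reversed total order, which yields a lowest-$\ell$-weight structure on the same tensor product with lowest $\ell$-weight matching that of $L_q(\bs\mu\bs\nu)$, and hence an injection $L_q(\bs\mu\bs\nu)\hookrightarrow L_q(\bs\mu)\otimes L_q(\bs\nu)$. A tensor product of simple finite-dimensional modules which is simultaneously highest- and lowest-$\ell$-weight with extremal $\ell$-weights equal to those of a single simple module is forced to be that simple module, yielding the desired isomorphism.

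The main obstacle is the last implication: closing the gap between the ``head equals socle'' statement and actual simplicity of the whole tensor product. This is the step where one typically invokes either a $q$-character multiplicity argument or a cyclicity/cosimplicity criterion from the background of \cite{mosi}; careful bookkeeping is needed to ensure the argument works uniformly from the sole assumption of pairwise cross-simplicity and does not require a case analysis depending on the type of $\lie g$.
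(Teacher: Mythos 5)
Your outline is correct, and the ``main obstacle'' you flag at the end is not actually an obstacle: it is precisely \Cref{c:vnvstar} (i.e.\ \cite[Corollary 4.1.4]{mosi}, recalled in the paper), which says that $L_q(\bs\mu)\otimes L_q(\bs\nu)$ is simple if and only if both $L_q(\bs\mu)\otimes L_q(\bs\nu)$ and $L_q(\bs\nu)\otimes L_q(\bs\mu)$ are highest-$\ell$-weight. Your surjection and injection are exactly the statement that both orders are highest-$\ell$-weight (via \Cref{l:subsandq}(a)), so no $q$-character multiplicity argument and no type-dependent case analysis is needed; everything runs through \Cref{cyc}, \Cref{l:hlwquot} and \Cref{c:vnvstar}, all of which are type-uniform. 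Two small points of hygiene. First, for the ``reversed'' step you should not reverse the total order literally: within each block the reversed pairs would no longer be highest-$\ell$-weight. What you want is to swap the two blocks while keeping the internal orders, i.e.\ consider $\bs\omega'_1,\dots,\bs\omega'_q,\bs\omega_1,\dots,\bs\omega_p$; the cross pairs are simple, hence highest-$\ell$-weight in either order, so \Cref{cyc} applies and gives that $L_q(\bs\nu)\otimes L_q(\bs\mu)$ is highest-$\ell$-weight. Second, your translation of ``no arrow between the components'' into ``pairwise cross-simplicity'' is right, but it deserves one line: by \eqref{defredset} and \eqref{e:arrowsdefp}, reducibility of $L_q(\bs\omega)\otimes L_q(\bs\omega')$ forces an arrow in one of the two directions, so the absence of both arrows yields simplicity. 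With these adjustments the argument is complete and is the standard one behind \cite[Proposition 3.4.1]{mosi}, whose proof the present paper omits.
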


The converse of the above proposition is not true. In fact, \Cref{t:3lineprime} below implies that, if $\lie g$ is of type $A_2$ and $r=2$ in \eqref{e:newprimex}, the given graph  is not prime. On the other hand, the same theorem implies this graph is prime if $r\ge 3$.
	\begin{equation}\label{e:newprimex}
		\begin{tikzcd}
			\stackrel{r}{1} \arrow[r,"r+1"] & \stackrel{2}{2} & \stackrel{1}{1} \arrow[swap,l,"4"]
		\end{tikzcd} 
	\end{equation}
	Note that, for $r=1$, this is only a pre-factorization graph, but the corresponding simple module is prime and its $q$-factorization graph is:
	\begin{equation*}
		\begin{tikzcd}
			\stackrel{2}{2}  & \stackrel{2}{1} \arrow[swap,l,"3"]
		\end{tikzcd} 
	\end{equation*}
	The  simple modules associated to these graphs belong to the Hernandez-Leclerc category $\mathcal C_{r+1}$.

	Recall also the following notions of duality for pre-factorization graphs. Given a graph $G$, we denote by $G^-$ the graph obtained from $G$ by reversing all the arrows and keeping the rest of structure of (pre)-factorization graph. Then, $G^-$ is a factorization graph as well, which we refer to as the arrow-dual of $G$. Similarly, the graph $G^*$, called the color-dual of $G$, obtained  by changing the coloring according to the rule $i\mapsto i^*$ for all $i\in I$, is a factorization graph. Moreover,
\begin{equation}\label{e:dualgraphs}
	\bs\pi_{G^-,v,a^{-1}} =\bs\pi_{G,v,a}^- \qquad\text{and}\qquad \bs\pi_{G^*,v,aq^{-r^\vee h^\vee}} =\bs\pi_{G,v,a}^*.
\end{equation}

\subsection{Statement of the Main Results}\label{ss:main}
Throughout this section, we let $G=G(\bs\pi)=(\mathcal V,\mathcal A)$ be a $q$-factorization graph. We say $G$ is prime if $L_q(\bs\pi)$ is prime. We start by recalling:

\begin{prop}[{\cite[Proposition 3.5.2]{mosi}}]\label{p:buildupprimesmono}
	Suppose $G$ is prime and $\#\mathcal V>1$. Then, for every $v\in \partial G$,  $G_{\mathcal V\setminus\{v\}}$ is also prime.\qed
\end{prop}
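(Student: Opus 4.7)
The plan is to argue by contradiction. Suppose $L_q(\bs\pi)$ is prime but $L_q(\bs\pi')$ is not, where $\bs\pi'=\bs\pi/\bs\omega_v$ and $\bs\omega_v$ denotes the $q$-factor attached to the boundary vertex $v$. By the arrow-dual symmetry \eqref{e:dualgraphs}, which preserves primality, we may assume without loss of generality that $v$ is $\prec$-maximal; in particular, every arrow of $G$ incident to $v$ terminates at $v$. The $q$-factorization graph of $\bs\pi'$ is then exactly $G_{\mathcal V\setminus\{v\}}$.

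By hypothesis, there exist nontrivial $\bs\pi_1,\bs\pi_2\in\mathcal P^+$ with $\bs\pi'=\bs\pi_1\bs\pi_2$ and $L_q(\bs\pi')\cong L_q(\bs\pi_1)\otimes L_q(\bs\pi_2)$. \Cref{p:qftp} ensures that the $q$-factorizations of $\bs\pi_1$ and $\bs\pi_2$ are dissociate, so the vertices of $G_{\mathcal V\setminus\{v\}}$ split into disjoint nonempty subsets $V_1$ and $V_2$. Since $v$ is $\prec$-maximal, choosing any enumeration of $\mathcal V$ refining $\prec$ with $v$ last realizes $L_q(\bs\pi)$ as a quotient of the ordered tensor product $L_q(\bs\pi_1)\otimes L_q(\bs\pi_2)\otimes L_q(\bs\omega_v)$.

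The goal is then to show that, for some $j\in\{1,2\}$, the tensor $L_q(\bs\omega_v)\otimes L_q(\bs\pi_j)$ is simple (hence equal to $L_q(\bs\omega_v\bs\pi_j)$) and that the resulting two-factor product $L_q(\bs\omega_v\bs\pi_j)\otimes L_q(\bs\pi_{3-j})$ is itself simple. If so, this product must coincide with $L_q(\bs\pi)$ by a highest-$\ell$-weight comparison, directly contradicting primality of $L_q(\bs\pi)$.

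The main obstacle is verifying the absorption step: one must argue that the in-neighbors of $v$ and the cross-interactions between $\bs\omega_v$ and the $q$-factors of $\bs\pi_j$ are compatible with simplicity. The boundary property of $v$ is crucial here, because all arrows at $v$ are in-arrows and therefore the pairwise reducibility conditions governed by $\mathscr R_{c(w),c(v)}^{\lambda(w),\lambda(v)}$ for $w\prec v$ all point in the same direction. Combined with the dissociation of $\bs\pi_1$ and $\bs\pi_2$, this should allow a case analysis depending on whether the in-neighbors of $v$ lie in $V_1$, in $V_2$, or across both, which in each case reduces to a pairwise analysis built from the two-vertex criteria underlying the definition of the sets $\mathscr R_{i,j}^{r,s}$.
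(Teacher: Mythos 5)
Your write-up stops exactly where the proof has to start: the ``absorption step'' is asserted to follow from ``a case analysis \dots which in each case reduces to a pairwise analysis,'' but it is never carried out, and the route you propose for it cannot be carried out. The first problem is that you have read the hypothesis $v\in\partial G$ too weakly. You use only that all arrows at $v$ point the same way, and you explicitly allow $v$ to have several neighbours distributed ``across both'' $V_1$ and $V_2$. Under that reading the statement is false: in \Cref{ex:cesubpt=>p} the $4$-cycle is prime and the vertex $\bs\omega_{1,a}$ has every incident arrow terminating at it, yet deleting it leaves a connected three-vertex graph which is not prime by the second assertion in \eqref{e:cesubpt=>ps}. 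What $v\in\partial G$ actually provides is a \emph{unique} neighbour $w$ of $v$; that is the indispensable structural input (it is also what makes $v$ automatically a source or a sink, so your duality reduction is fine apart from a convention slip: with $h_a\prec t_a$, a vertex at which all arrows terminate is $\prec$-\emph{minimal}).

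The second problem is that, even granting $\deg v=1$, your target decomposition is the wrong one. If $w\in V_1$, the only $j$ for which $L_q(\bs\omega_v)\otimes L_q(\bs\pi_j)$ can be expected to be simple is $j=2$, and then $L_q(\bs\omega_v\bs\pi_2)\otimes L_q(\bs\pi_1)\cong L_q(\bs\omega_v)\otimes L_q(\bs\pi_2)\otimes L_q(\bs\pi_1)$, which by \Cref{irred} is simple if and only if $L_q(\bs\omega_v)\otimes L_q(\bs\pi_1)$ is simple --- precisely the product that the adjacency of $v$ and $w$ generically destroys; moreover, such simplicity questions are not settled by two-vertex data alone (cf. \Cref{ex:cosubpt=>p}). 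The workable argument absorbs $\bs\omega_v$ into the part containing its neighbour and never requires $L_q(\bs\omega_v)\otimes L_q(\bs\pi_1)$ to be simple: normalizing the arrow to point into $v$, each of the ordered products $L_q(\bs\pi_1)\otimes L_q(\bs\omega_v)$, $L_q(\bs\pi_1)\otimes L_q(\bs\pi_2)$, $L_q(\bs\pi_2)\otimes L_q(\bs\omega_v)$, and $L_q(\bs\omega_v)\otimes L_q(\bs\pi_2)$ is highest-$\ell$-weight by \Cref{l:hlwquot}, so \Cref{cyc} makes both $L_q(\bs\pi_1)\otimes L_q(\bs\omega_v)\otimes L_q(\bs\pi_2)$ and $L_q(\bs\pi_2)\otimes L_q(\bs\pi_1)\otimes L_q(\bs\omega_v)$ highest-$\ell$-weight; these surject onto the two orderings of $L_q(\bs\pi_1\bs\omega_v)\otimes L_q(\bs\pi_2)$, which is therefore simple by \Cref{c:vnvstar} and isomorphic to $L_q(\bs\pi)$, contradicting primality. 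You should rebuild the final step along these lines rather than through pairwise simplicity.
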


 Together with elementary combinatorial properties of trees, \Cref{p:buildupprimesmono} implies:

\begin{cor}\label{c:subpt=>p}
	If $G$ is a prime tree, every of its proper connected subgraphs are  prime.\hfill\qed
\end{cor}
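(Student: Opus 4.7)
The plan is to iterate \Cref{p:buildupprimesmono} by peeling off one leaf at a time while keeping the target subgraph intact. Concretely, let $H$ be a proper connected subgraph of the prime tree $G$ and induct on $n := \#\mathcal{V}(G) - \#\mathcal{V}(H) \geq 1$. Suppose I can exhibit a leaf $v$ of $G$ with $v \notin \mathcal{V}(H)$, so in particular $v \in \partial G$. Then \Cref{p:buildupprimesmono} gives that $G' := G_{\mathcal{V}\setminus\{v\}}$ is prime; since removing a leaf from a tree yields a tree and $H$ is untouched, $G'$ is a prime tree containing $H$ as a connected subgraph with $\#\mathcal{V}(G') - \#\mathcal{V}(H) = n-1$, and the induction hypothesis applied to $(G', H)$ finishes the step. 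The base case is $H = G$, where the conclusion is just the hypothesis.

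The only point that actually needs justification is the combinatorial claim that such a leaf $v$ always exists when $H \subsetneq G$. I would prove this by contracting the vertex set $\mathcal{V}(H)$ to a single vertex $h$: the resulting graph is still a tree (contracting a connected subtree of a tree creates no cycle and preserves connectedness), has at least two vertices since $H$ is a proper subgraph, and therefore has at least two leaves. At least one such leaf must be distinct from $h$, and any leaf of the contracted graph other than $h$ has the same neighborhood as it does in $G$, hence is already a leaf of $G$ lying in $\mathcal{V}(G)\setminus\mathcal{V}(H)$.

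I do not expect a genuine obstacle: the representation-theoretic content is entirely absorbed by \Cref{p:buildupprimesmono}, and the remainder is the purely graph-theoretic leaf-peeling procedure sketched above. The mildest point of care is verifying that the induced subgraph $G'$ produced at each step really is again a \emph{tree} containing $H$ (so that the induction hypothesis applies), but this is immediate because leaf deletion preserves the tree property and $v$ was chosen outside $\mathcal{V}(H)$.
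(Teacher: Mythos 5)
Your argument is correct and is exactly the intended one: the paper derives this corollary by combining \Cref{p:buildupprimesmono} with the elementary fact that a proper connected subtree of a tree can always be reached by successively deleting leaves lying outside it. Your leaf-existence argument via contraction and the inductive leaf-peeling are sound, so nothing is missing.
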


This corollary is false for general $q$-factorization graphs as  we show in \Cref{ex:cesubpt=>p}. Its converse is false even for trees as  \Cref{ex:cosubpt=>p} shows. The following proposition, which can also be regarded as a stronger version of this corollary, provides a criterion for checking if a tree is prime by studying its $3$-vertex subgraphs. It will be proved in \Cref{ss:redc}.

\begin{prop}\label{p:critsimp}
	Suppose $\bs\pi,\bs\pi'\in\mathcal P^+$ have dissociate $q$-factorizations and that $G(\bs\pi\bs\pi')$ is a tree. Suppose further that  the unique arrow 
	connecting $G(\bs\pi)$ and $G(\bs\pi')$ in $G(\bs\pi\bs\pi')$ is of the form $(\bs\omega,\bs\omega')$ with $\bs\omega\in G(\bs\pi), \bs\omega'\in G(\bs\pi')$, and that there exists $\widetilde{\bs\omega}\in\mathcal P^+$ such that one of the following conditions holds:
	\begin{enumerate}[(i)]
		\item $(\bs\omega,\widetilde{\bs\omega})\in\mathcal A_G$ and $L_q(\widetilde{\bs{\omega}}\bs{\omega})\otimes L_q(\bs{\omega}')$ is simple;
		\item $(\widetilde{\bs\omega},\bs\omega')\in\mathcal A_{G'}$ and $L_q(\widetilde{\bs{\omega}}\bs\omega')\otimes L_q(\bs{\omega})$ is simple.
	\end{enumerate}
	Then, $L_q(\bs\pi)\otimes L_q(\bs\pi')$ is simple, as well.
\end{prop}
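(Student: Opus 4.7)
The plan is to first reduce case (ii) to case (i) via the arrow-duality of \eqref{e:dualgraphs}: applying $G\mapsto G^-$ to both $G(\bs\pi)$ and $G(\bs\pi')$ reverses all arrows, turning the bridge $(\bs\omega,\bs\omega')$ into $(\bs\omega',\bs\omega)$ and the internal arrow $(\widetilde{\bs\omega},\bs\omega')\in\mathcal A_{G'}$ into $(\bs\omega',\widetilde{\bs\omega})\in\mathcal A_{(G')^-}$. After relabelling $\bs\pi\leftrightarrow\bs\pi'$ (which the conclusion tolerates, since simplicity of a tensor product is symmetric), this is precisely the shape of the hypothesis of (i) for the arrow-duals $(\bs\pi')^-, \bs\pi^-$. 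Since the $^-$-duality is induced by a Hopf antiautomorphism, it preserves simplicity of tensor products, so it suffices to treat case (i).

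Assume (i), and write $\bs\pi=\widetilde{\bs\omega}\bs\omega\bs\beta$ and $\bs\pi'=\bs\omega'\bs\gamma$, with $\bs\beta,\bs\gamma$ the products of the remaining $q$-factors. The tree hypothesis together with uniqueness of the bridge $(\bs\omega,\bs\omega')$ immediately yields: (a) every $q$-factor of $\bs\beta$ is dissociate from every $q$-factor of $\bs\pi'$, and (b) every $q$-factor of $\bs\gamma$ is dissociate from every $q$-factor of $\bs\pi$; in particular $\widetilde{\bs\omega}$ and $\bs\omega'$ are dissociate, so $L_q(\widetilde{\bs\omega})\otimes L_q(\bs\omega')\cong L_q(\widetilde{\bs\omega}\bs\omega')$ by \Cref{p:primeconect}. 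The strategy is induction on $\#\mathcal V(G(\bs\pi\bs\pi'))$. The base case is $\bs\pi=\widetilde{\bs\omega}\bs\omega$, $\bs\pi'=\bs\omega'$, which is exactly the hypothesis of (i). For the inductive step (when $\#\mathcal V\ge 4$) the tree structure supplies a leaf $v$ of $G(\bs\pi\bs\pi')$ distinct from $\widetilde{\bs\omega},\bs\omega,\bs\omega'$; after possibly swapping $\bs\pi\leftrightarrow\bs\pi'$, assume $v\in\mathcal V(G(\bs\pi))\setminus\{\widetilde{\bs\omega},\bs\omega\}$. The inductive hypothesis then gives that $L_q(\bs\pi/v)\otimes L_q(\bs\pi')$ is simple, and (a) together with \Cref{p:primeconect} gives that $L_q(v)\otimes L_q(\bs\pi')\cong L_q(v\bs\pi')$ is simple.

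The main obstacle is the closing step of the induction: deducing simplicity of $L_q(\bs\pi)\otimes L_q(\bs\pi')$ from these two simplicities together with the fact that, as a leaf of $G(\bs\pi)$, $v$ has a unique neighbour $v_0$ in $G(\bs\pi)/v$. My plan is to tensor the defining exact sequence
\begin{equation*}
0\longrightarrow K\longrightarrow L_q(\bs\pi/v)\otimes L_q(v)\longrightarrow L_q(\bs\pi)\longrightarrow 0
\end{equation*}
with $L_q(\bs\pi')$, commute the middle factor $L_q(v)$ past $L_q(\bs\pi')$ via the iso afforded by the simplicity of $L_q(v)\otimes L_q(\bs\pi')$, and identify the resulting middle module with $L_q((\bs\pi/v)\bs\pi')\otimes L_q(v)$. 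The composition-factor structure of this last product is governed solely by the single $v$-$v_0$ arrow, and the task is to show that every non-head composition factor is absorbed by the image of $K\otimes L_q(\bs\pi')$, so that the quotient $L_q(\bs\pi)\otimes L_q(\bs\pi')$ collapses to its simple head $L_q(\bs\pi\bs\pi')$. Making this last point rigorous is the crux: it should combine a Chari--Pressley-type cyclicity analysis for the triple $L_q(\bs\pi/v)\otimes L_q(v)\otimes L_q(\bs\pi')$ with the tree structure to exclude unexpected composition factors, and very likely also invokes \Cref{p:buildupprimesmono} to control how the primality of subgraphs of $G(\bs\pi)$ propagates through the induction.
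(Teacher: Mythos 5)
Your reduction of (ii) to (i) by arrow-duality is fine (the paper simply treats the two cases as symmetric), and your observation that every $q$-factor of $\bs\pi$ other than $\bs\omega$ is non-adjacent to every $q$-factor of $\bs\pi'$ is the right starting point. But there is a genuine gap at exactly the point you flag as the crux: the closing step of the induction is not carried out, and the mechanism you propose for it would not work. Knowing that $L_q(\bs\pi\bs\varpi^{-1})\otimes L_q(\bs\pi')$ and $L_q(\bs\varpi)\otimes L_q(\bs\pi')$ are simple (where $\bs\varpi$ is the removed leaf) does not let you conclude anything about $L_q(\bs\pi)\otimes L_q(\bs\pi')$ by tensoring the sequence $0\to K\to L_q(\bs\pi\bs\varpi^{-1})\otimes L_q(\bs\varpi)\to L_q(\bs\pi)\to 0$ with $L_q(\bs\pi')$ and ``absorbing'' composition factors: composition factors do not determine submodule structure, and the displayed sequence already presupposes that $L_q(\bs\pi\bs\varpi^{-1})\otimes L_q(\bs\varpi)$ is highest-$\ell$-weight, which depends on the orientation of the arrow at the leaf. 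Moreover, in your induction the hypothesis involving $\widetilde{\bs\omega}$ enters only in the base case, whereas it must intervene every time the two tensor factors are put in the reverse order, since it is precisely what neutralizes the arrow $(\bs\omega,\bs\omega')$.

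The missing ingredient is \Cref{c:vnvstar}: $L_q(\bs\pi)\otimes L_q(\bs\pi')$ is simple if and only if both orderings are highest-$\ell$-weight. The order $L_q(\bs\pi)\otimes L_q(\bs\pi')$ is highest-$\ell$-weight by \Cref{l:hlwquot}, since the only adjacency between the two graphs is $(\bs\omega,\bs\omega')$. For the reverse order, the paper partitions $\mathcal V_G=\mathcal V_+\mathbin{\dot\cup}\{\bs\omega,\widetilde{\bs\omega}\}\mathbin{\dot\cup}\mathcal V_-$ and $\mathcal V_{G'}=\mathcal V'_+\mathbin{\dot\cup}\{\bs\omega'\}\mathbin{\dot\cup}\mathcal V'_-$ according to the partial order relative to $\{\bs\omega,\widetilde{\bs\omega}\}$ and $\bs\omega'$, sets $W=L_q(\bs\pi_+)\otimes L_q(\bs\omega\widetilde{\bs\omega})\otimes L_q(\bs\pi_-)$ and $W'=L_q(\bs\pi'_+)\otimes L_q(\bs\omega')\otimes L_q(\bs\pi'_-)$, and checks via \Cref{cyc} that every ordered pair of factors of $W'\otimes W$ is highest-$\ell$-weight; the only pair not automatic from the tree structure is $L_q(\bs\omega\widetilde{\bs\omega})\otimes L_q(\bs\omega')$, which is simple by hypothesis (i). Then $W'\otimes W$ surjects onto $L_q(\bs\pi')\otimes L_q(\bs\pi)$, which is therefore highest-$\ell$-weight. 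A leaf-removal induction could be salvaged along the same lines (show by induction that the reverse order is highest-$\ell$-weight, place the leaf on the correct side according to whether it is a source or a sink, and invoke \Cref{cyc}), but as written your argument does not establish the proposition.
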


The following criterion for determining whether a tree is prime  will follow as  consequence of \Cref{p:critred2}. 

\begin{prop}\label{c:nonadcjprim}
	If  $G$ is a tree and $L_q(\bs{\omega})^*\otimes L_q(\bs{\omega}^{\prime})$ is simple for any pair of non-adjacent vertices $\bs{\omega}$ and $\bs{\omega}^{\prime}$ of $G$, then $L_q(\bs{\pi})$ is prime.\endd
\end{prop}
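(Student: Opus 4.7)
The plan is to argue by contradiction, using the reducibility criterion of \Cref{p:critred2} (to be stated and proved in \Cref{s:crit}). Suppose, towards a contradiction, that $L_q(\bs\pi)$ is not prime. Then there exist nontrivial $\bs\pi_1,\bs\pi_2\in\mathcal P^+$ such that $\bs\pi = \bs\pi_1\bs\pi_2$ and $L_q(\bs\pi)\cong L_q(\bs\pi_1)\otimes L_q(\bs\pi_2)$ is simple. By \Cref{p:qftp}, $\bs\pi_1$ and $\bs\pi_2$ then have dissociate $q$-factorizations, so the vertex multiset $\mathcal V$ of $G$ partitions as $\mathcal V = \mathcal V_1\sqcup \mathcal V_2$, where $\mathcal V_i$ is the vertex set of the induced subgraph $G_i$ of $G$, which coincides with $G(\bs\pi_i)$.

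The next step is to locate an arrow of $G$ crossing this partition. Since $G$ is a tree, an elementary edge count yields that the number of arrows of $G$ with one endpoint in $\mathcal V_1$ and the other in $\mathcal V_2$ equals $c_1+c_2-1$, where $c_i$ denotes the number of connected components of $G_i$; indeed, $|\mathcal A_G| = |\mathcal V|-1$ and each $G_i$ is a forest with $|\mathcal V_i|-c_i$ edges. Because the partition is nontrivial we have $c_1,c_2\ge 1$, so at least one arrow $(\bs\omega,\bs\omega')$ of $G$ crosses the partition; up to swapping the roles of $\bs\pi_1$ and $\bs\pi_2$, we may assume $\bs\omega\in \mathcal V_1$ and $\bs\omega'\in \mathcal V_2$, the reversed orientation being symmetric via the arrow-duality \eqref{e:dualgraphs}.

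With such a crossing arrow fixed, the concluding step is to feed the triple $(\bs\pi_1,\bs\pi_2,(\bs\omega,\bs\omega'))$ into \Cref{p:critred2}. The expected shape of that criterion is that it forces $L_q(\bs\pi_1)\otimes L_q(\bs\pi_2)$ to be reducible whenever every dual tensor product $L_q(\bs\omega)^*\otimes L_q(\widetilde{\bs\omega})$ across a $q$-factor $\widetilde{\bs\omega}$ of $\bs\pi_i$ not adjacent to $\bs\omega$ (or $\bs\omega'$) in $G$ is simple. Since $G$ is a tree containing the crossing arrow $(\bs\omega,\bs\omega')$, any such $\widetilde{\bs\omega}$ will automatically be non-adjacent to the other endpoint of the crossing arrow, and therefore the standing hypothesis supplies the required simplicity of all these duals. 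This contradicts the assumed simplicity of $L_q(\bs\pi_1)\otimes L_q(\bs\pi_2)$, so no nontrivial factorization can exist and $L_q(\bs\pi)$ is prime.

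The principal obstacle in executing this plan is strictly that of aligning the precise hypotheses and conclusion of the as-yet-unstated \Cref{p:critred2} with the crossing-arrow configuration extracted above, so that the non-adjacency of the pair of vertices whose dual tensor the criterion produces becomes automatic from the tree property of $G$. Once \Cref{p:critred2} is available in its final form, the remainder is a routine bookkeeping exercise on the induced subgraphs $G_1,G_2\subseteq G$, which explains why the present statement is advertised as a corollary-type consequence rather than as a result requiring its own substantive machinery.
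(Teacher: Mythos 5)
Your overall strategy (contradiction via a nontrivial tensor factorization, dissociate $q$-factorizations from \Cref{p:qftp}, and an appeal to \Cref{p:critred2} across a crossing arrow) is the paper's intended route, but there is a genuine gap at the crucial step. \Cref{p:critred2} requires $G(\bs\pi_1)$ and $G(\bs\pi_2)$ to be \emph{simply linked}, i.e.\ joined by exactly one arrow, and your own edge count shows this need not hold: the number of crossing edges is $c_1+c_2-1$, which exceeds $1$ as soon as one of the induced subgraphs is disconnected (e.g.\ a star whose centre lies in $\mathcal V_2$ and two of whose leaves lie in $\mathcal V_1$). In that situation \Cref{p:critred2} is simply not applicable, and moreover your claim that every cross pair other than the chosen arrow is ``automatically non-adjacent'' is false, so the standing hypothesis of the proposition does not supply the simplicity the criterion needs for those pairs. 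Choosing ``at least one'' crossing arrow is therefore not enough; you must arrange for there to be \emph{exactly} one.

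The missing reduction is the reason the paper deduces the proposition from \Cref{p:critred2} \emph{together with} \Cref{l:elemtree}. Concretely: if $L_q(\bs\pi)$ is not prime, decompose it into prime tensor factors using \Cref{irred}; by \Cref{p:primeconect} each prime factor has a connected $q$-factorization graph, so these factors partition the tree $G$ into $k\ge 2$ connected subtrees. Contracting each subtree to a point yields a tree on $k$ vertices; picking a leaf of this quotient tree and grouping the remaining subtrees together produces a factorization $L_q(\bs\pi)\cong L_q(\bs\pi_1)\otimes L_q(\bs\pi_2)$ (still simple, by \Cref{irred}) in which \emph{both} induced subgraphs are connected. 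Only now does \Cref{l:elemtree} give the unique crossing edge, the two pieces are simply linked, every other cross pair is non-adjacent, and \Cref{p:critred2} yields the contradiction. Without this regrouping step your argument does not close.
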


Let us recall the main result of \cite{mosi}.

\begin{thm}[{\cite[Theorem 3.5.5]{mosi}}]\label{t:toto}
	If $\lie g$ is of type $A$, every totally ordered $q$-factorization graph is prime. \qed
\end{thm}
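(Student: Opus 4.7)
The plan is to proceed by induction on $n = \#\mathcal V$, the case $n = 1$ being immediate. For the inductive step, label the vertices $v_1 \prec v_2 \prec \cdots \prec v_n$ consistently with the total order, and write $\bs\omega_k$ for the $q$-factor at $v_k$. A preparatory observation is that the total order, together with the convention $h_a \prec t_a$ for every arrow $a$, guarantees a directed path in $G$ from $v_j$ to $v_i$ whenever $i < j$, whose signed exponent $\epsilon_\rho > 0$ is independent of the path by \eqref{e:expcomp}. Iteratively applying \eqref{e:krhwtp} to the ordered product
\begin{equation*}
L_q(\bs\omega_n) \otimes L_q(\bs\omega_{n-1}) \otimes \cdots \otimes L_q(\bs\omega_1),
\end{equation*}
where each adjacent pairwise factor is either simple or reducible and highest-$\ell$-weight with the larger vertex on the left, one concludes that this ordered product is highest-$\ell$-weight with $L_q(\bs\pi)$ as its head.

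Now suppose for contradiction that $L_q(\bs\pi) \cong L_q(\bs\pi_1) \otimes L_q(\bs\pi_2)$ is a nontrivial tensor factorization. By \Cref{p:qftp} the $q$-factorizations of $\bs\pi_1, \bs\pi_2$ are dissociate, so the multiset $\mathcal V$ partitions as $\mathcal V = \mathcal V_1 \sqcup \mathcal V_2$ and the induced subgraphs are precisely $G(\bs\pi_1)$ and $G(\bs\pi_2)$. These remain totally ordered—a subposet of a total order is total—and so are prime by the inductive hypothesis. Choose the smallest $k$ for which $v_k$ and $v_{k+1}$ lie in different parts; relabel so that $v_k \in \mathcal V_1$ and $v_{k+1} \in \mathcal V_2$. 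The total order then forces a definite edge pattern between the two parts via the chain of arrows realizing $v_{k+1} \prec \cdots \prec v_k$ in $G$.

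The decisive step, and the point where the type $A$ hypothesis is essential, is to leverage the explicit arithmetic-progression form of $\mathscr R_{i,j}^{r,s}$ in \eqref{t:krredsets} together with the concrete highest-$\ell$-weight realization of the first paragraph. Using the inductive hypothesis, $L_q(\bs\pi_1)$ and $L_q(\bs\pi_2)$ are themselves heads of ordered tensor products of their $q$-factors. One then identifies a specific $\ell$-weight monomial which appears with positive multiplicity in the $q$-character of $L_q(\bs\pi_1) \otimes L_q(\bs\pi_2)$ and serves as a highest $\ell$-weight of a proper submodule, but which is absent from the $q$-character of $L_q(\bs\pi)$. This contradicts the assumed simplicity of the tensor product.

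The main obstacle is precisely this final step: promoting the combinatorial cut $\mathcal V = \mathcal V_1 \sqcup \mathcal V_2$ to a genuine singular vector witnessing reducibility of $L_q(\bs\pi_1) \otimes L_q(\bs\pi_2)$. The combinatorics and the inductive reduction to this cut are routine, but extracting such a vector from the type-$A$ arithmetic of \eqref{t:krredsets} and the ordered-tensor realization of both factors is the technical core. Cleaner alternatives likely route through the snake-module framework of Mukhin--Young, where primality for chains of KR modules in type $A$ is proved directly via thin $q$-characters, or through the cluster-algebra perspective of \cite{hele:cluster}, where a totally ordered $q$-factorization graph is expected to correspond to a cluster monomial.
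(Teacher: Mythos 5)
Your first two paragraphs are sound: the ordered tensor product $L_q(\bs\omega_n)\otimes\cdots\otimes L_q(\bs\omega_1)$ is indeed highest-$\ell$-weight (consecutive vertices in a total order must be joined by an arrow, and then \eqref{e:krhwtp} together with \Cref{cyc} applies), and \Cref{p:qftp} does reduce the problem to showing that $L_q(\bs\pi_1)\otimes L_q(\bs\pi_2)$ is reducible for every nontrivial partition $\mathcal V=\mathcal V_1\sqcup\mathcal V_2$ into (automatically totally ordered) induced subgraphs. But that reduction is essentially a restatement of what primality means; the entire content of the theorem lives in the step you label ``decisive'' and then explicitly leave undone. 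Producing, for an arbitrary cut of an arbitrary totally ordered graph, a dominant $\ell$-weight monomial occurring in $\qch(L_q(\bs\pi_1))\cdot\qch(L_q(\bs\pi_2))$ but not in $\qch(L_q(\bs\pi))$ is not a routine verification: the modules $L_q(\bs\pi_1)$, $L_q(\bs\pi_2)$, $L_q(\bs\pi)$ are not thin and their $q$-characters are not explicitly known for general totally ordered graphs, so there is no concrete object from which to extract the ``singular vector'' you invoke. Noting that $G(\bs\pi_1)$ and $G(\bs\pi_2)$ are prime by induction also does not help, since primality of the factors is compatible with their tensor product being simple. As written, the proposal is a plan whose technical core is missing, and it is that core which constitutes the theorem.

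For context, this statement is quoted here from \cite{mosi} (Theorem 3.5.5) and is not proved in the present paper, so there is no in-text argument to match; but the proof in \cite{mosi} does not go through $q$-character comparisons. It is built on highest-$\ell$-weight and duality machinery of the kind reviewed in \Cref{ss:hlwtp} and \Cref{ss:redc} of this paper: one shows that for a suitable choice of cut one of the two orderings $L_q(\bs\pi_1)\otimes L_q(\bs\pi_2)$ or $L_q(\bs\pi_2)\otimes L_q(\bs\pi_1)$ fails to be highest-$\ell$-weight (source--sink obstructions in the spirit of \Cref{c:source<-sink}, combined with the explicit type-$A$ description \eqref{t:krredsets}), which by \Cref{c:vnvstar} forces reducibility. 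If you want to complete your argument, that is the route to pursue rather than the $q$-character computation you sketch.
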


As we have already pointed out in \eqref{e:newprimex}, even for type $A_2$, it is not true that every tree with three vertices is prime. There are just three possibilities of connected graphs with three vertices: alternating lines, monotonic lines,  and a triangles. The latter two are prime by \Cref{t:toto}.  If $G$ is an alternating line, say
	\begin{equation*}
	\begin{tikzcd}
		\stackrel{r_1}{i_1} & \arrow[swap,l,"m_1"]  \stackrel{r}{i} \arrow[r,"m_2"] & \stackrel{r_2}{i_2} 
	\end{tikzcd} \qquad\text{or}\qquad 
	\begin{tikzcd}
		\stackrel{r_1}{i_1} \arrow[r,"m_1"] &   \stackrel{r}{i}  & \arrow[swap,l,"m_2"] \stackrel{r_2}{i_2} 
	\end{tikzcd}
\end{equation*}
we have the following characterization for type $A$ proved in \Cref{ss:3linecut}. 

\begin{thm}\label{t:3lineprime}
	Assume $\lie g$ is of type $A$ and let $G$ be an alternating line as above. For $j=1,2$, let also $I_j\subseteq I$ be the minimal connected subdiagram containing $[i,i_j]$ such that $m_j\in\mathscr R_{i,i_j,I_j}^{r,r_j}$ and let $j'$ be such that $\{j,j'\}=\{1,2\}$. Then, $G$ is not prime if and only if there exists $j\in\{1,2\}$ such that
	\begin{equation*}\label{2gencondm}
		i_{j'}\in I_j, \qquad m_{j'}\in{\mathscr{R}_{i,i_{j'},I_j}^{r,r_{j'}}}, \qquad |m_j-m_{j'}-\check h_{I_j}|\in\mathscr R_{w_0^{I_j}(i_j),i_{j'},I_j}^{r_j,r_{j'}},
	\end{equation*}
	and
	\begin{equation*}\label{2exracondm}
		m_j-m_{j'}+1\notin\mathscr{R}_{i_j,i_{j'},I_j}^{r_j-1,r_{j'}}  \quad\text{if}\quad r_j>1.
	\end{equation*}\endd
\end{thm}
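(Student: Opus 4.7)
The plan is to establish the biconditional by reducing non-primality of $G$ to the simplicity of a specific two-factor tensor product, and then analyzing that product through the short exact sequences of Kirillov-Reshetikhin tensor products in type $A$ recalled in \Cref{soctpkr}. By \Cref{p:qftp}, $L_q(\bs\pi)$ fails to be prime if and only if $\bs\pi$ admits a dissociate factorization $\bs\pi=\bs\pi_1\bs\pi_2$ yielding a simple product $L_q(\bs\pi_1)\otimes L_q(\bs\pi_2)$. For a three-vertex connected graph, any such non-trivial factorization isolates a single vertex. The splitting that isolates the central vertex $\bs\omega$ is ruled out because the outer pair is not joined by an arrow in $G$, so $L_q(\bs\omega_1\bs\omega_2)\cong L_q(\bs\omega_1)\otimes L_q(\bs\omega_2)$, and the resulting triple tensor then contains the reducible tensor subfactor $L_q(\bs\omega)\otimes L_q(\bs\omega_j)$. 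Hence non-primality of $G$ is equivalent to asking: for which $j\in\{1,2\}$ is $L_q(\bs\omega_j)\otimes L_q(\bs\omega\bs\omega_{j'})$ simple?

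To answer this, I would invoke the short exact sequence realizing $L_q(\bs\omega\bs\omega_{j'})$ as the unique simple quotient of $L_q(\bs\omega)\otimes L_q(\bs\omega_{j'})$; such a sequence is available in type $A$ and is highest-$\ell$-weight by \eqref{e:krhwtp} since $(\bs\omega,\bs\omega_{j'})$ is an arrow of $G$. The kernel $K$ is a simple module whose Drinfeld polynomial is an explicit product of KR factors governed by the minimal subdiagram $I_{j'}$, with a shift dictated by $w_0^{I_{j'}}$. Tensoring on the left with $L_q(\bs\omega_j)$ and tracking the highest-$\ell$-weight composition factors that can appear, simplicity of $L_q(\bs\omega_j)\otimes L_q(\bs\omega\bs\omega_{j'})$ is equivalent to the simultaneous vanishing of two obstructions: the first is an arrow reducibility between $\bs\omega_j$ and $\bs\omega$ or $\bs\omega_{j'}$ computed in the subdiagram $I_j$ rather than $I_{j'}$, producing $i_{j'}\in I_j$ together with $m_{j'}\in\mathscr R_{i,i_{j'},I_j}^{r,r_{j'}}$ via \eqref{incredsets}; the second comes from the composition factor contributed by $K$, which after applying the KR duality on $I_j$ yields the resonance $|m_j-m_{j'}-\check h_{I_j}|\in\mathscr R_{w_0^{I_j}(i_j),i_{j'},I_j}^{r_j,r_{j'}}$, where $w_0^{I_j}$ and $\check h_{I_j}$ encode the color shift and Coxeter shift in the associated short exact sequence restricted to $I_j$. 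The final exclusion $m_j-m_{j'}+1\notin\mathscr R_{i_j,i_{j'},I_j}^{r_j-1,r_{j'}}$ when $r_j>1$ rules out a cascade one level down the KR hierarchy, where an ``unexpected'' composition factor would otherwise appear from a sub-KR of $\bs\omega_j$.

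The main obstacle is making the ``arising from $K$'' analysis precise: identifying $K$ as an explicit product of $q$-factors and locating $L_q(\bs\omega_j)\otimes K$ inside the Loewy structure of the ambient triple tensor $L_q(\bs\omega_j)\otimes L_q(\bs\omega)\otimes L_q(\bs\omega_{j'})$. This requires essential use of the explicit description \eqref{t:krredsets} of $\mathscr R_{i,j,J}^{r,s}$ in type $A$ via a careful combinatorial match of reducibility exponents along $[i,i_j]\cup[i,i_{j'}]$, which is why the argument does not generalize type-uniformly. Once the analysis is completed for one orientation of the arrows and one value of $j$, the remaining cases follow by the symmetry $j\leftrightarrow j'$ and by the arrow-duality \eqref{e:dualgraphs} used to reduce the ``into the center'' configuration to the ``out of the center'' one.
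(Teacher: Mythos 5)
Your opening reduction is correct and coincides with the paper's: for a three-vertex alternating line the only cuts that can witness non-primality are the two singleton-end cuts, since isolating the centre splits off the non-adjacent outer pair (\Cref{p:primeconect}), and the resulting triple tensor product contains a reducible adjacent pair, so it cannot be simple by \Cref{irred}. So the theorem does indeed reduce to deciding, for each $j$, whether the product of the end module with the simple module attached to the remaining two vertices is simple.

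The gap is in how you propose to make that decision. You want to resolve $L_q(\bs\omega\bs\omega_{j'})$ by a length-two exact sequence $0\to K\to L_q(\bs\omega)\otimes L_q(\bs\omega_{j'})\to L_q(\bs\omega\bs\omega_{j'})\to 0$ with $K$ simple and an explicit product of KR factors, then tensor with the end module and read the conditions off the composition factors. Two problems. First, the paper establishes such a sequence only for a product of two \emph{fundamental} modules (\eqref{socforl2} and \eqref{soctpd}, i.e.\ the case $r=s=1$ of \Cref{soctpkr}); for general KR weights the length-two claim and the identification of the socle are themselves nontrivial and are not available here, so your main structural input is unproven. Second, even granting the sequence, knowing the composition factors of the ambient triple tensor product does not decide whether the quotient is simple: one must determine which factors land in the sub coming from $K$ and which in the quotient, which is exactly the Loewy-structure question you yourself flag as ``the main obstacle'' and then defer --- but that is the entire content of the hard direction, not a technicality. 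The paper's route is different: it first reduces simplicity to the highest-$\ell$-weight property via \Cref{c:vnvstar} (one ordering of the product is automatically highest-$\ell$-weight), derives the necessity of the conditions in \eqref{2gencond} from \Cref{l:j'inJ} and \Cref{p:critred2p}, and then proves or refutes the highest-$\ell$-weight property by a long induction on the end weight driven by \Cref{cyc} and the criteria of \Cref{p:critredp} and \Cref{p:critred2p}; the fundamental-module socle sequence and \Cref{rsh} enter only in the base case $r=1$, after a further reduction to $s=1$. As written, your proposal identifies the correct reduction and the correct shape of the answer, but it does not contain a proof of either implication.
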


The present proof of \Cref{t:3lineprime} utilizes the precise description of the sets $\mathscr R_{i,j}^{r,s}$ and, hence, in order to extend it to other types, it requires a case by case analysis. The proof consists of showing that the cut obtained by isolating the $i_j$-colored vertex, $j=1,2$, is simple if and only if the stated conditions are satisfied.

An important concept related to the connection of finite-dimensional representations of quantum affine algebras with the theory of cluster algebras is that of a real simple module, that is a simple module whose tensor square is simple.  We shall say $G$ is real if $L_q(\bs\pi)$ is real. 

\begin{con}\label{cj:realtree}
	Every tree is real.\endd
\end{con}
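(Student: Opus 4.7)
My plan is to induct on $n=\#\mathcal V$. The base case $n=1$ is the well-known realness of Kirillov--Reshetikhin modules, so $L_q(\bs\pi)^{\otimes 2}\cong L_q(\bs\pi^2)$ automatically.

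For the inductive step, fix a leaf $\bs\omega\in\partial G$ with unique neighbor $\bs\omega'$, and write $\bs\pi=\bs\omega\cdot\bs\pi'$ so that $G(\bs\pi')=G_{\mathcal V\setminus\{\bs\omega\}}$ is a tree on $n-1$ vertices. By induction $L_q(\bs\pi')^{\otimes 2}\cong L_q({\bs\pi'}^2)$, and KR realness gives $L_q(\bs\omega)^{\otimes 2}\cong L_q(\bs\omega^2)$. The goal is to upgrade these two realness statements into $L_q(\bs\pi)^{\otimes 2}\cong L_q(\bs\pi^2)$.

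The central intermediate claim is the simplicity $L_q(\bs\omega^2)\otimes L_q({\bs\pi'}^2)\cong L_q(\bs\pi^2)$. The Drinfeld polynomials $\bs\omega^2$ and ${\bs\pi'}^2$ have dissociate $q$-factorizations, but the joint graph $G(\bs\pi^2)$ is \emph{not} a tree: the arrow $\bs\omega\to\bs\omega'$ in $G$ lifts to four arrows in $G(\bs\pi^2)$ connecting the two copies of $\bs\omega$ to the two copies of $\bs\omega'$. I plan to peel off these connecting arrows one at a time, iteratively applying \Cref{p:critsimp} (and its dualized version via \eqref{e:dualgraphs}) with auxiliary vertex $\widetilde{\bs\omega}$ chosen at each step as a suitable copy of $\bs\omega$ or $\bs\omega'$ in the residual configuration. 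The hypothesis of \Cref{p:critsimp} then localizes, at each step, to the simplicity of a tensor product whose $q$-factorization graph is a \emph{twin-leaf alternating line} $\bs\omega\to\bs\omega'\leftarrow\bs\omega$ with two identical copies of $\bs\omega$ at the leaves; this is precisely the content of the very special case of \Cref{t:3lineprime} referred to in the introduction as \Cref{c:3aline}. The subgraph-primality tool \Cref{p:buildupprimesmono} is used in parallel to ensure that the intermediate subgraphs extracted during the peeling remain well-suited to \Cref{p:critsimp}.

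With the intermediate claim in hand, a final comparison completes the induction: the natural surjection $L_q(\bs\omega)^{\otimes 2}\otimes L_q(\bs\pi')^{\otimes 2}\twoheadrightarrow L_q(\bs\pi^2)$ factors through a surjection onto $L_q(\bs\pi)^{\otimes 2}$, and simplicity of the source forces both quotients to agree, giving the realness of $\bs\pi$. The principal obstacle I foresee is the combinatorial bookkeeping in the iterated peeling: one must carefully select, at each stage, an appropriate single connecting arrow, verify the auxiliary-vertex hypothesis of \Cref{p:critsimp}, and ensure that the remaining structure is organized as a tree to which the proposition applies. Since every input beyond \Cref{c:3aline} is type-independent, the argument should transfer verbatim to any type in which \Cref{c:3aline} is established, in agreement with the authors' announcement in the introduction.
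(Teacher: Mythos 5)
Your central intermediate claim --- that $L_q(\bs\omega^2)\otimes L_q({\bs\pi'}^2)$ is simple and isomorphic to $L_q(\bs\pi^2)$ --- is false, and the argument collapses there. By your induction hypothesis and KR realness, $L_q(\bs\omega^2)\otimes L_q({\bs\pi'}^2)\cong L_q(\bs\omega)\otimes L_q(\bs\omega)\otimes L_q(\bs\pi')\otimes L_q(\bs\pi')$, so \Cref{irred} would force $L_q(\bs\omega)\otimes L_q(\bs\pi')$ to be simple. But $\bs\omega$ is adjacent to its neighbour $\bs\omega'$, which is a $q$-factor of $\bs\pi'$: already for the two-vertex tree (where $\bs\pi'=\bs\omega'$) the product $L_q(\bs\omega)\otimes L_q(\bs\omega')$ is reducible by the very definition of an arrow, and for any prime tree $L_q(\bs\omega)\otimes L_q(\bs\pi')$ must be reducible, since otherwise it would be isomorphic to $L_q(\bs\pi)$ and give a nontrivial factorization. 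Relatedly, the mechanism you propose for reaching the claim is not available: \Cref{p:critsimp} requires $G(\bs\pi\bs\pi')$ to be a tree with a \emph{unique} arrow joining the two pieces, and there is no operation of ``peeling off'' one of the four connecting arrows of $G(\bs\pi^2)$ at a time --- the arrows are dictated by the Drinfeld polynomials and cannot be removed or processed sequentially.

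The workable route never asserts that $L_q(\bs\pi)^{\otimes 2}$ factors into smaller simple modules. Write $\bs\pi^2=\bs\pi\cdot\bs\omega\cdot\bs\pi'$ and (assuming the arrow at the leaf points outward, i.e. $m>0$) show that the three ordered products $L_q(\bs\pi)\otimes L_q(\bs\omega)$, $L_q(\bs\pi)\otimes L_q(\bs\pi')$, and $L_q(\bs\omega)\otimes L_q(\bs\pi')$ are highest-$\ell$-weight; then \Cref{cyc} makes $L_q(\bs\pi)\otimes L_q(\bs\omega)\otimes L_q(\bs\pi')$ highest-$\ell$-weight, it surjects onto $L_q(\bs\pi)^{\otimes 2}$, and \Cref{c:vnvstar} gives simplicity of the square. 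The third product is handled by \Cref{l:hlwquot}, the second by the induction hypothesis (realness of $L_q(\bs\pi')$). You correctly identified \Cref{c:3aline} --- the twin-leaf alternating line --- as the essential special input, but its role is to show that the single problematic ordered pair $L_q(\bs\omega\bs\omega')\otimes L_q(\bs\omega)$ inside an auxiliary product $L_q(\bs\varpi_+)\otimes L_q(\bs\omega\bs\omega')\otimes L_q(\bs\varpi_-)\otimes L_q(\bs\omega)$ is simple, which is what makes the first product $L_q(\bs\pi)\otimes L_q(\bs\omega)$ highest-$\ell$-weight; it is not used to effect a \Cref{p:critsimp}-style cut of $G(\bs\pi^2)$.
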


The last of our main results is: 

\begin{thm}\label{t:realtree}
	\Cref{cj:realtree} is true if $\lie g$ is of type $A$.\endd
\end{thm}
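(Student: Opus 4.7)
The plan is to proceed by induction on $n=|\mathcal V|$, the number of $q$-factors of $\bs\pi$. The base case $n=1$ reduces to the well-known reality of Kirillov--Reshetikhin modules in type $A$. For the inductive step, given a tree $G$ with $n\ge 2$ vertices, pick a leaf $v\in\partial G$ with unique neighbor $w$, and write $\bs\pi=\bs\pi_0\cdot\bs\omega$ with $\bs\omega=\bs\omega_v$ and $\bs\pi_0$ having $q$-factorization graph $G\setminus\{v\}$. By \Cref{p:buildupprimesmono} and \Cref{c:subpt=>p}, the latter is again a prime tree on $n-1$ vertices, so the inductive hypothesis gives that $L_q(\bs\pi_0)$ is real; hence $L_q(\bs\pi_0^2)\cong L_q(\bs\pi_0)^{\otimes 2}$ is simple, and likewise $L_q(\bs\omega^2)\cong L_q(\bs\omega)^{\otimes 2}$ by the reality of the KR factor.

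To upgrade reality from $\bs\pi_0$ to $\bs\pi$, the strategy is to invoke \Cref{p:critsimp} with the auxiliary $\widetilde{\bs\omega}$ taken to be a second copy of the leaf $q$-factor $\bs\omega$. The central geometric observation is that in the polynomial $\bs\omega\cdot\bs\pi_0\cdot\bs\omega$ the two copies of $\bs\omega$ share all parameters and are therefore non-adjacent (since $0\notin\mathscr R_{c(v)}^{\lambda(v),\lambda(v)}$ by \eqref{c:krtpsl2}), while each is adjacent only to $\bs\omega_w$ via the arrow inherited from $G$. The relevant local configuration is therefore the alternating three-line $\stackrel{\lambda(v)}{c(v)}\to\stackrel{\lambda(w)}{c(w)}\leftarrow\stackrel{\lambda(v)}{c(v)}$ (or its arrow-dual, depending on the direction of the $(v,w)$-arrow in $G$). \Cref{c:3aline}---the very particular case of \Cref{t:3lineprime} highlighted in the introduction---asserts precisely that this alternating three-line with identical outer $q$-factors is prime. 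Feeding this primality, together with the inductive simplicities $L_q(\bs\pi_0^2)\cong L_q(\bs\pi_0)^{\otimes 2}$ and $L_q(\bs\omega^2)\cong L_q(\bs\omega)^{\otimes 2}$, into \Cref{p:critsimp} (applied to the grouped products of the two copies of $\bs\pi$) should yield the simplicity of $L_q(\bs\pi)\otimes L_q(\bs\pi)$, completing the induction.

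The main obstacle is that the naive doubled graph $G(\bs\pi\cdot\bs\pi)$ is not a tree: each arrow of $G$ produces a pair of parallel arrows between the two copies and hence a $4$-cycle, so the clean tree criteria \Cref{c:subpt=>p} and \Cref{c:nonadcjprim} do not apply in a single shot. The leaf-by-leaf induction circumvents this by ever only duplicating one vertex at a time, so that the combinatorial difficulty collapses to the single alternating three-line controlled by \Cref{c:3aline}. In type $A$ this particular three-line can be verified prime by checking that the non-primality conditions of \Cref{t:3lineprime}---which impose constraints relating $m_j-m_{j'}$ to certain reducibility sets---all fail when $i_1=i_2$, $r_1=r_2$, and $m_1=m_2$. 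This is also the reason why, as remarked in the introduction, the same argument will carry over to all types once \Cref{c:3aline} is established in general.
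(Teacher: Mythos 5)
Your overall skeleton---induction on the number of $q$-factors, removal of a leaf $\bs\omega_{i,a,r}$ adjacent to $\bs\omega_{j,aq^m,s}$, and \Cref{c:3aline} as the key local input---is the same as the paper's. But the way you propose to execute the inductive step has two genuine problems.

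First, \Cref{p:critsimp} cannot be the engine of the reduction. Its hypotheses require $\bs\pi$ and $\bs\pi'$ to have dissociate $q$-factorizations, $G(\bs\pi\bs\pi')$ to be a tree with a unique connecting arrow, and $\widetilde{\bs\omega}$ to be a vertex with $(\bs\omega,\widetilde{\bs\omega})\in\mathcal A_G$ (or $(\widetilde{\bs\omega},\bs\omega')\in\mathcal A_{G'}$). For the square $\bs\pi^2$ every grouping into two factors repeats $q$-factors, the graph is not a tree, and your candidate $\widetilde{\bs\omega}$ (a second copy of the leaf) is, as you yourself observe, \emph{not} adjacent to $\bs\omega$, so neither condition (i) nor (ii) can hold. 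Saying the grouped application ``should yield'' simplicity leaves precisely the hard step unproved. The paper instead reduces via \Cref{c:vnvstar} (the two orders coincide here) to showing $L_q(\bs\pi)^{\otimes 2}$ is highest-$\ell$-weight, writes $\bs\pi^2=\bs\pi\,\bs\omega_{i,a,r}\,\bs\varpi$ with $\bs\varpi=\bs\pi\bs\omega_{i,a,r}^{-1}$, and checks the three ordered pairs are highest-$\ell$-weight using \Cref{cyc}; the only delicate pair, $L_q(\bs\pi)\otimes L_q(\bs\omega_{i,a,r})$, is handled by a further four-fold decomposition $L_q(\bs\varpi_+)\otimes L_q(\bs\omega_{i,a,r}\bs\omega_{j,aq^m,s})\otimes L_q(\bs\varpi_-)\otimes L_q(\bs\omega_{i,a,r})$ whose single nontrivial ordered pair is exactly the module of \Cref{c:3aline}. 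The induction hypothesis (reality of $L_q(\bs\varpi)$, a tree with one fewer vertex---primality of $G$ is neither assumed nor needed, so your appeal to \Cref{p:buildupprimesmono} is out of place) enters only to show $L_q(\bs\pi)\otimes L_q(\bs\varpi)$ is highest-$\ell$-weight.

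Second, you have the content of \Cref{c:3aline} backwards. It asserts that $L_q(\bs\omega_{i,a,r})\otimes L_q(\bs\omega_{i,a,r}\bs\omega_{j,aq^m,s})$ is \emph{simple}, i.e., that the three-vertex line with identical outer factors is \emph{not} prime; its proof verifies that the non-primality conditions \eqref{2gencond} and \eqref{2exracond} of \Cref{t:3lineprime} \emph{hold}, not that they fail. It is the simplicity of that cut---not primality of the three-line---that allows the extra copy of the leaf to be absorbed in the cyclicity argument. With the mechanism replaced by the \Cref{c:vnvstar}/\Cref{cyc} route and the corollary read correctly, your outline becomes the paper's proof.
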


The only reason the proof of this theorem requires the assumption that $\lie g$ is of type $A$ is that it uses the following corollary of \Cref{t:3lineprime}, proved in \Cref{ss:realtree}.  Hence,  \Cref{cj:realtree} is proved if this corollary holds for all types, which is expected to be true. 

\begin{cor}\label{c:3aline}
	If $\lie g$ is of type $A$,  $i,j\in I, a\in\mathbb F^\times, r,s\in\mathbb Z_{>0}$, and $m\in\mathscr R_{i,j}^{r,s}$, the module
	$L_q(\bs\omega_{i,a,r})\otimes L_q(\bs\omega_{i,a,r}\bs\omega_{j,aq^m,s})$ is simple.\endd
\end{cor}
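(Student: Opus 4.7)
The plan is to reduce \Cref{c:3aline} to \Cref{t:3lineprime}, applied to the $q$-factorization graph $G$ of $\bs\pi := \bs\omega_{i,a,r}^2\,\bs\omega_{j,aq^m,s}$. Abbreviating $\bs\omega = \bs\omega_{i,a,r}$ and $\bs\omega' = \bs\omega_{j,aq^m,s}$, the module I want to show is simple is $L_q(\bs\omega)\otimes L_q(\bs\omega\bs\omega')$. First I check that $G$ is a valid $q$-factorization graph and identify it as a three-vertex alternating line of ``in-in'' shape: two ends both equal to $\bs\omega$ (color $i$, weight $r$), a middle vertex $\bs\omega'$ (color $j$, weight $s$), and both arrows pointing into $\bs\omega'$ with exponent $m$. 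The only nontrivial point is \eqref{e:areqfact} between the two $i$-colored vertices, which holds because the unique connecting path has signature $(+,-)$ and $\epsilon = m-m = 0$, and $0\notin\mathscr R_i^{r,r}$ by \eqref{c:krtpsl2}.

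Next, I cast $G$ into the notation of \Cref{t:3lineprime}: the theorem's middle carries color $j$ and weight $s$, both end vertices carry color $i$ and weight $r$, and $m_1 = m_2 = m$. By symmetry $I_1 = I_2 = I^*$, the minimal connected subdiagram containing $[j,i]$ with $m\in\mathscr R_{j,i,I^*}^{s,r}$ (which exists since $m\in\mathscr R_{j,i}^{s,r}$). Conditions (a) ($i\in I^*$) and (b) ($m\in\mathscr R_{j,i,I^*}^{s,r}$) of the theorem are then automatic. For (c), $|m_1-m_2-\check h_{I^*}| = \check h_{I^*}$, so I must check that $\check h_{I^*} \in \mathscr R_{w_0^{I^*}(i),i,I^*}^{r,r}$; writing $\delta = d_{I^*}(i,\partial I^*)$ and using the type-$A$ formula \eqref{t:krredsets} applied to $I^*$, together with $\check h_{I^*}=|I^*|+1$ and $d_{I^*}(w_0^{I^*}(i),i) = |I^*|-1-2\delta$, this membership is witnessed by the admissible index $p = r-1-\delta \in[-\delta, r-1]$. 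For (d) in the case $r>1$, the set $\mathscr R_{i,i,I^*}^{r-1,r}$ consists of integers of the same parity as $2r-1$ and bounded below by $3$, so $1$ is excluded.

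Finally, I invoke the proof of \Cref{t:3lineprime} carried out in \Cref{ss:3linecut}: per the Introduction, that proof establishes the sharper assertion that the cut isolating one $i_j$-colored end vertex is simple if and only if the non-primality conditions hold. Since those conditions are met here, the cut is simple; and it is precisely $L_q(\bs\omega)\otimes L_q(\bs\omega\bs\omega')$, yielding the corollary.

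The main obstacle is the verification of condition (c), which leans essentially on the explicit type-$A$ description \eqref{t:krredsets} of the sets $\mathscr R_{i,j,J}^{r,s}$; as the Introduction already remarks, extending \Cref{c:3aline} to other types would require a case-by-case reverification of (c). A secondary subtlety is that I need not just the statement but the proof of \Cref{t:3lineprime}, since the statement alone only gives non-primality of $L_q(\bs\pi)$, whereas the proof identifies the specific cut that realizes the decomposition.
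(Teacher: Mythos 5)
Your proposal is correct and follows essentially the same route as the paper: both reduce the corollary to verifying the conditions \eqref{2gencond} and \eqref{2exracond} established in the proof of \Cref{t:3lineprime} for the symmetric alternating line with $j'=i$, $s'=r$, $m'=m$, and your witness $p=r-1-\delta$ for condition (c) is exactly the paper's choice $k=r-1+\min\{0,p\}$. (One cosmetic slip: since $m>0$, the two arrows point \emph{out of} the middle vertex $\bs\omega_{j,aq^m,s}$ rather than into it, but this does not affect the verification, as the theorem's conditions are insensitive to the orientation.)
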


\subsection{Examples}\label{ss:ex}

\begin{ex}\label{ex:cosubpt=>p}
	We  give a counterexample to the converse of  \Cref{c:subpt=>p}. More precisely, we give an example of a non-prime tree all of whose proper connected subgraphs are prime. Let  $\mathfrak{g}$ be of type $A_n, n\ge 3,$ and consider $\bs{\pi}=\bs{\omega}_{1,aq,2}\bs{\omega}_{2,aq^5}\bs{\omega}_{3,aq^6,3}\bs{\omega}_{3,aq^8}$, where $a\in\mathbb{F}^{\times}$, whose $q$-factorization graph is
		\begin{equation*}
		\begin{tikzcd}
			\stackrel{1}{3} \arrow[r,"3"] &   \stackrel{1}{2} \arrow[r,"4"] & \stackrel{2}{1} & \arrow[swap,l,"5"] \stackrel{3}{3}
		\end{tikzcd}
	\end{equation*}
	Theorems \ref{t:toto} and \ref{t:3lineprime} imply the two connected subgraphs with three vertices are prime. We can be more precise regarding the  subgraph which is an alternating line using the results of \Cref{ss:3linecut}: the tensor products
	\begin{equation*}
		L_q(\bs{\omega}_{3,aq^6,3}\bs{\omega}_{1,aq,2})\otimes L_q(\bs{\omega}_{2,aq^5})\quad\textrm{and}\quad L_q(\bs{\omega}_{2,aq^5}\bs{\omega}_{1,aq,2})\otimes L_q(\bs{\omega}_{3,aq^6,3})
	\end{equation*}
	are both reducible. Indeed, the first one does not satisfy the first condition in \eqref{2gencondm} while the second (which is not highest-$\ell$-weight by the more precise analysis of \Cref{ss:3linecut}), does not satisfy the third condition.  However, we will see that $G(\bs\pi)$ is not prime by checking that 
	\begin{equation*}
		V=L_q(\bs{\omega}_{3,aq^8}\bs{\omega}_{2,aq^5}\bs{\omega}_{1,aq,2})\otimes L_q(\bs{\omega}_{3,aq^6,3}) \quad\text{is simple.}
	\end{equation*}

	The argument that follows utilizes results reviewed in \Cref{ss:hlwtp}.	
	In order to show $V$ is simple, since the tensor product in the opposite order is highest-$\ell$-weight by \eqref{e:krhwtp} and \Cref{l:hlwquot}, \Cref{c:vnvstar} implies it suffices to show $V$ is highest-$\ell$-weight as well.  To do that, consider 
	\begin{equation*}
		W=L_q(\bs{\omega}_{3,aq^8}\bs{\omega}_{2,aq^5}\bs{\omega}_{1,aq,2})\otimes L_q(\bs{\omega}_{3,aq^8})\otimes L_q(\bs{\omega}_{3,aq^5,2}).
	\end{equation*}
	Let
	\begin{align*}
		W_{1,2}= & L_q(\bs{\omega}_{3,aq^8}\bs{\omega}_{2,aq^5}\bs{\omega}_{1,aq,2})\otimes L_q(\bs{\omega}_{3,aq^8}),\\
		W_{1,3}= &L_q(\bs{\omega}_{3,aq^8}\bs{\omega}_{2,aq^5}\bs{\omega}_{1,aq,2})\otimes L_q(\bs{\omega}_{3,aq^5,2}),\\
		W_{2,3}= &L_q(\bs{\omega}_{3,aq^8})\otimes L_q(\bs{\omega}_{3,aq^5,2}).
	\end{align*}
	Notice $W_{1,3}$ is highest-$\ell$-weight. Therefore, we have epimorphisms
	\begin{equation*}
		W_{1,3}\twoheadrightarrow L_q(\bs{\omega}_{3,aq^8}\bs{\omega}_{3,aq^5,2})=L_q(\bs{\omega}_{3,aq^6,3})
	\end{equation*}
	and
	\begin{equation*}
		W=L_q(\bs{\omega}_{3,aq^8}\bs{\omega}_{2,aq^5}\bs{\omega}_{1,aq,2})\otimes W_{1,3}\twoheadrightarrow L_q(\bs{\omega}_{3,aq^8}\bs{\omega}_{2,aq^5}\bs{\omega}_{1,aq,2})\otimes L_q(\bs{\omega}_{3,aq^6,3})=V.
	\end{equation*}
	So, it suffices check that $W$ is highest-$\ell$-weight. In its turn, by Theorem \ref{cyc}, it is enough to show that $W_{1,2}$ and $W_{1,3}$ are highest-$\ell$-weight. 
	
	To see that $W_{1,2}$ is highest-$\ell$-weight, consider 
	\begin{equation*}
		\widetilde{W}_{1,2}=L_q(\bs{\omega}_{3,aq^8}\bs{\omega}_{2,aq^5})\otimes L_q(\bs{\omega}_{1,aq,2})\otimes L_q(\bs{\omega}_{3,aq^8})
	\end{equation*}
	and notice $L_q(\bs{\omega}_{3,aq^8}\bs{\omega}_{2,aq^5})\otimes L_q(\bs{\omega}_{1,aq,2})$ is highest-$\ell$-weight by Lemma \ref{l:hlwquot}. Therefore, there exists an epimorphism $\widetilde{W}_{1,2}\twoheadrightarrow W_{1,2}$ and we are left to show that $\widetilde{W}_{1,2}$ is highest-$\ell$-weight. Notice that $7=8-1\notin\mathscr{R}^{2,1}_{1,3}$. Thus, $L_q(\bs{\omega}_{1,aq,2})\otimes L_q(\bs{\omega}_{3,aq^8})$ is simple (and highest-$\ell$-weight). In its turn, $L_q(\bs{\omega}_{3,aq^8}\bs{\omega}_{2,aq^5})\otimes L_q(\bs{\omega}_{3,aq^8})$ is simple (and highest-$\ell$-weight) by \Cref{t:3lineprime}. \Cref{cyc} then implies $\widetilde{W}_{1,2}$ is highest-$\ell$-weight as desired. 
	
	To prove that $W_{1,3}$ is highest-$\ell$-weight, consider 
	\begin{equation*}
		\widetilde{W}_{1,3}=L_q(\bs{\omega}_{3,aq^8})\otimes L_q(\bs{\omega}_{2,aq^5}\bs{\omega}_{1,aq,2})\otimes L_q(\bs{\omega}_{3,aq^5,2})
	\end{equation*}
	and notice $L_q(\bs{\omega}_{3,aq^8})\otimes L_q(\bs{\omega}_{2,aq^5}\bs{\omega}_{1,aq,2})$ is highest-$\ell$-weight by Lemma \ref{l:hlwquot}. Therefore, there exists an epimorphism $\widetilde{W}_{1,3}\twoheadrightarrow W_{1,3}$ and it suffices to show that $\widetilde{W}_{1,3}$ is highest-$\ell$-weight. Since  $L_q(\bs{\omega}_{3,aq^8})\otimes L_q(\bs{\omega}_{3,aq^5,2})$ is clearly highest-$\ell$-weight and $L_q(\bs{\omega}_{2,aq^5}\bs{\omega}_{1,aq,2})\otimes L_q(\bs{\omega}_{3,aq^5,2})$ is simple by  \Cref{t:3lineprime},  the claim follows from \Cref{cyc}. \endd
\end{ex}

\begin{ex}\label{ex:cesubpt=>p}
	We now give an example showing the hypothesis that $G$ is a tree in \Cref{c:subpt=>p} is essential. Let $\mathfrak{g}$ be of type $A_2$, $a\in\mathbb F^\times$, and consider $\bs{\pi}=\bs{\omega}_{1,aq^7,2}\bs{\omega}_{1,a}\bs{\omega}_{2,aq^4,2}\bs{\omega}_{2,aq^3}$, whose $q$-factorization graph is
	\begin{equation*}
		\begin{tikzcd}
			\stackrel{2}{2} \arrow[dr,"4"] & \arrow[swap,l,"3"]    \stackrel{2}{1} \arrow[r,"4"] & \arrow[swap,dl,"3"] \stackrel{1}{2}\\
			 &  \stackrel{1}{1}
		\end{tikzcd}
	\end{equation*}
	By \Cref{t:3lineprime}, 
	\begin{equation}\label{e:cesubpt=>pr}
		L_q(\bs{\omega}_{2,aq^3}\bs{\omega}_{1,a})\otimes L_q(\bs{\omega}_{2,aq^4,2})\quad\textrm{and}\quad L_q(\bs{\omega}_{2,aq^3})\otimes L_q(\bs{\omega}_{1,aq^7,2}\bs{\omega}_{2,aq^4,2}) \quad\text{are reducible,}
	\end{equation}
	while 
	\begin{equation}\label{e:cesubpt=>ps}
		L_q(\bs{\omega}_{2,aq^4,2}\bs{\omega}_{1,a})\otimes L_q(\bs{\omega}_{2,aq^3})\quad\textrm{and}\quad L_q(\bs{\omega}_{2,aq^4,2})\otimes L_q(\bs{\omega}_{1,aq^7,2}\bs{\omega}_{2,aq^3}) \quad\text{are simple,}
	\end{equation}
	showing that not every proper connected subgraph is prime. Let us check that $L_q(\bs{\pi})$ is prime. This is equivalent to showing that the following tensor products are reducible:
	\begin{enumerate}[1)]
		\item $L_q(\bs{\omega}_{1,aq^7,2})\otimes L_q(\bs{\omega}_{2,aq^4,2}\bs{\omega}_{2,aq^3}\bs{\omega}_{1,a})$;
		\item $L_q(\bs{\omega}_{1,aq^7,2}\bs{\omega}_{2,aq^3}\bs{\omega}_{1,a})\otimes L_q(\bs{\omega}_{2,aq^4,2})$;
		\item $L_q(\bs{\omega}_{2,aq^3})\otimes L_q(\bs{\omega}_{1,aq^7,2}\bs{\omega}_{2,aq^4,2}\bs{\omega}_{1,a})$;
		\item $L_q(\bs{\omega}_{1,a})\otimes L_q(\bs{\omega}_{1,aq^7,2}\bs{\omega}_{2,aq^4,2}\bs{\omega}_{2,aq^3})$;
		\item $L_q(\bs{\omega}_{1,aq^7,2}\bs{\omega}_{1,a})\otimes L_q(\bs{\omega}_{2,aq^4,2}\bs{\omega}_{2,aq^3})$;
		\item $L_q(\bs{\omega}_{2,aq^3}\bs{\omega}_{1,a})\otimes L_q(\bs{\omega}_{1,aq^7,2}\bs{\omega}_{2,aq^4,2})$;
		\item $L_q(\bs{\omega}_{2,aq^4,2}\bs{\omega}_{1,a})\otimes L_q(\bs{\omega}_{1,aq^7,2}\bs{\omega}_{2,aq^3})$.
	\end{enumerate}
	
	\noindent 1) It follows from \eqref{e:cesubpt=>ps} that we have an isomorphism 
	\begin{equation*}
		L_q(\bs{\omega}_{1,aq^7,2})\otimes L_q(\bs{\omega}_{2,aq^4,2}\bs{\omega}_{2,aq^3}\bs{\omega}_{1,a})\cong L_q(\bs{\omega}_{1,aq^7,2})\otimes L_q(\bs{\omega}_{2,aq^3})\otimes L_q(\bs{\omega}_{2,aq^4,2}\bs{\omega}_{1,a}),
	\end{equation*}
	which is reducible because $L_q(\bs{\omega}_{1,aq^7,2})\otimes L_q(\bs{\omega}_{2,aq^3})$ is reducible since $4=7-3\in\mathscr R_{1,2}^{2,1}$. 
	
	\noindent 2) We claim the given tensor product is not highest-$\ell$-weight and, hence, reducible. Indeed, if this were not the case, \cite[Proposition 5.2.2]{mosi} would imply that $L_q(\bs{\omega}_{2,aq^3}\bs{\omega}_{1,a})\otimes L_q(\bs{\omega}_{2,aq^4,2})$ is highest-$\ell$-weight, as well. Since $4>\min\{3,0\}$, $L_q(\bs{\omega}_{2,aq^4,2})\otimes L_q(\bs{\omega}_{2,aq^3}\bs{\omega}_{1,a})$ is  highest-$\ell$-weight by \eqref{e:krhwtp} and \Cref{l:hlwquot}. \Cref{c:vnvstar} would then imply  $L_q(\bs{\omega}_{2,aq^3}\bs{\omega}_{1,a})\otimes L_q(\bs{\omega}_{2,aq^4,2})$ is simple, contradicting \eqref{e:cesubpt=>pr}. 
	
	\noindent 3) Similarly to the previous case, $L_q(\bs{\omega}_{2,aq^3})\otimes L_q(\bs{\omega}_{1,aq^7,2}\bs{\omega}_{2,aq^4,2}\bs{\omega}_{1,a})$ is not highest-$\ell$-weight since, otherwise, \cite[Proposition 5.2.2]{mosi} would imply $L_q(\bs{\omega}_{2,aq^3})\otimes L_q(\bs{\omega}_{1,aq^7,2}\bs{\omega}_{2,aq^4,2})$ were highest-$\ell$-weight, and then simple, contradicting \eqref{e:cesubpt=>pr}.
	
	\noindent 4) Similarly to case (1), we have an isomorphism
	\begin{equation*}
		L_q(\bs{\omega}_{1,a})\otimes L_q(\bs{\omega}_{1,aq^7,2}\bs{\omega}_{2,aq^4,2}\bs{\omega}_{2,aq^3})\cong L_q(\bs{\omega}_{1,a})\otimes L_q(\bs{\omega}_{2,aq^4,2})\otimes L_q(\bs{\omega}_{1,aq^7,2}\bs{\omega}_{2,aq^3}),
	\end{equation*}
	and we are done since  $L_q(\bs{\omega}_{1,a})\otimes L_q(\bs{\omega}_{2,aq^4,2})$ is reducible. 
	
	\noindent 5) In this case we have 
	\begin{equation*}
		L_q(\bs{\omega}_{1,aq^7,2}\bs{\omega}_{1,a})\otimes L_q(\bs{\omega}_{2,aq^4,2}\bs{\omega}_{2,aq^3})\cong L_q(\bs{\omega}_{1,aq^7,2})\otimes L_q(\bs{\omega}_{1,a})\otimes L_q(\bs{\omega}_{2,aq^4,2})\otimes L_q(\bs{\omega}_{2,aq^3}),
	\end{equation*}
	and $L_q(\bs{\omega}_{1,a})\otimes L_q(\bs{\omega}_{2,aq^4,2})$ is reducible.
	
	\noindent 6) If $L_q(\bs{\omega}_{2,aq^3}\bs{\omega}_{1,a})\otimes L_q(\bs{\omega}_{1,aq^7,2}\bs{\omega}_{2,aq^4,2})$ were highest-$\ell$-weight, then \cite[Corollary 4.3.2]{mosi}  would imply that $L_q(\bs{\omega}_{2,aq^3}\bs{\omega}_{1,a})\otimes L_q(\bs{\omega}_{2,aq^4,2})$ is highest-$\ell$-weight as well and, hence, simple, yielding a contradiction as before. 
	
	\noindent 7) To see that $L_q(\bs{\omega}_{2,aq^4,2}\bs{\omega}_{1,a})\otimes L_q(\bs{\omega}_{1,aq^7,2}\bs{\omega}_{2,aq^3})$ is reducible, notice that $(\bs{\omega}_{1,aq^7,2}\bs{\omega}_{2,aq^3})^*=\bs{\omega}_{2,aq^4,2}\bs{\omega}_{1,a}$. Therefore, we have an epimorphism
	\begin{equation*}
		L_q(\bs{\omega}_{2,aq^4,2}\bs{\omega}_{1,a})\otimes L_q(\bs{\omega}_{1,aq^7,2}\bs{\omega}_{2,aq^3})\twoheadrightarrow\mathbb{F}
	\end{equation*}
	and, hence,  $L_q(\bs{\omega}_{2,aq^4,2}\bs{\omega}_{1,a})\otimes L_q(\bs{\omega}_{1,aq^7,2}\bs{\omega}_{2,aq^3})$ is reducible. 	\endd
\end{ex}	

Let us also mention a few examples related to the problem of determining the reality of certain modules.

\begin{ex}
	The converse of \Cref{cj:realtree} is not true, i.e., there are real modules whose $q$-factorizations graphs are not trees. Indeed, if $G=G(\bs\pi)$ is real, then $G'=G(\bs\pi^2)$ is real but, it is neither prime nor a tree. For instance, if $G$ is of the form 	\begin{tikzcd}
		\stackrel{r}{i} \arrow[r,"m"] & \stackrel{s}{j}
	\end{tikzcd} 	then $G'$ is of the form
	\begin{equation*}
		\begin{tikzcd}
			\stackrel{r}{i} \arrow[r,"m"] \arrow[dr,"m"]  & \stackrel{s}{j} & 	\stackrel{r}{i} \arrow[swap,l,"m"] \arrow[swap,dl,"m"] \\
			& \stackrel{s}{j} & 
		\end{tikzcd}
	\end{equation*}
	\endd
\end{ex}

\begin{ex}
	In contrast to the real graph $G'$ in the previous example, it was shown in \cite[Section 13.6]{hele:cluster} that the following graph for type $A_4$ is not real:
	\begin{equation*}
		\begin{tikzcd}
			\stackrel{1}{1}  \arrow[dr,"3"]  & \arrow[swap,l,"3"] \stackrel{1}{2} \arrow[r,"3"] & 	\stackrel{1}{3}  \arrow[swap,dl,"3"] \\
			& \stackrel{1}{2} & 
		\end{tikzcd}
	\end{equation*}
 In fact, this graph makes sense for type $A_n,n\ge 3$ and is not real (cf. \cite[Example 8.7]{cdfl}). In \cite[Example 8.6]{cdfl} it was shown the following graph is not real for type $A_n, n\ge 2$:
 \begin{equation*}
 	\begin{tikzcd}
 		\stackrel{2}{1}  \arrow[dr,"4"]  & \arrow[swap,l,"4"] \stackrel{3}{2} \arrow[r,"4"] & 	\stackrel{2}{1}  \arrow[swap,dl,"4"] \\
 		& \stackrel{1}{2} & 
 	\end{tikzcd}
 \end{equation*}
	\endd
\end{ex}

\section{Further Background and First Proofs}\label{s:crit}

\subsection{Hopf Algebra Facts} For easy of referencing, we partially reproduce \cite[Section 2.6]{mosi}. 

Given a Hopf algebra $\mathcal H$ over $\mathbb F$, its category $\mathcal C$ of finite-dimensional representations is an abelian monoidal category and we denote the (right) dual of a module  $V$ by $V^*$. More precisely, the action of $\mathcal H$ of $V^*$ is given by
\begin{equation}\label{rightdual}
	(hf)(v) = f(S(h)v) \quad\text{for}\quad h\in\mathcal H, f\in V^*, v\in V.
\end{equation}
It is well known that 
\begin{equation*}
	\operatorname{Hom}_{\mathcal H}(\mathbb F, V\otimes V^*) \ne 0 \qquad\text{and}\qquad \operatorname{Hom}_{\mathcal H}(V^*\otimes V, \mathbb F)\ne 0.
\end{equation*}
If the antipode is invertible, the notion of left dual module is obtained by replacing $S$ by $S^{-1}$ in \eqref{rightdual}. The left dual of $V$ will be denoted by $^*V$ and we have
\begin{equation*}
	^*(V^*)\cong (^*V)^* \cong V.
\end{equation*}
Given $\mathcal H$-modules $V_1,V_2,V_3$, we have
\begin{equation}\label{e:frobrec}
	\begin{aligned}
		\operatorname{Hom}_{\mathcal C}(V_1\otimes V_2, V_3)\cong \operatorname{Hom}_{\mathcal C}(V_1, V_3\otimes V_2^*), \quad
		\operatorname{Hom}_{\mathcal C}(V_1, V_2\otimes V_3)\cong \operatorname{Hom}_{\mathcal C}(V_2^*\otimes V_1, V_3),
	\end{aligned}
\end{equation}
and 
\begin{equation}\label{e:dualtp}
	(V_1\otimes V_2)^*\cong V_2^*\otimes V_1^*.
\end{equation}
Also, given a short exact sequence $0\to V_1\to V_2\to V_3\to 0$, we can consider the following short exact sequence
\begin{equation}\label{e:dualses}
	0\to V_3^*\to V_2^*\to V_1^*\to 0.
\end{equation}

\begin{lem}
	\label{c:nonzeromorph}
	Let $V_1,V_2,V_3, L_1,L_2$ be $\mathcal H$-modules and assume  $V_2$ is simple. If
	\begin{equation*}
		\varphi_1:L_1\rightarrow V_1\otimes V_2\quad\textrm{and}\quad\varphi_2: V_2\otimes V_3\rightarrow L_2
	\end{equation*}
	are nonzero homomorphisms, the composition
	$$L_1\otimes V_3 \xrightarrow{\varphi_1\otimes \operatorname{id}_{V_3}} V_1\otimes V_2\otimes V_3 \xrightarrow{\operatorname{id}_{V_1}\otimes \varphi_2} V_1\otimes L_2
	$$
	does not vanish. Similarly, if 
	\begin{equation*}
		\varphi_1:V_1\otimes V_2\rightarrow L_1\quad\textrm{and}\quad\varphi_2:L_2\rightarrow V_2\otimes V_3
	\end{equation*}
	are nonzero homomorphisms, the composition
	$$V_1\otimes L_2\xrightarrow{\operatorname{id}_{V_1}\otimes\varphi_2} V_1\otimes V_2\otimes V_3 \xrightarrow{\varphi_1\otimes\operatorname{id}_{V_3}} L_1\otimes V_3$$
	does not vanish.\qed
\end{lem}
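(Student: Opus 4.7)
The plan is to use the Frobenius reciprocity isomorphisms \eqref{e:frobrec} to convert $\varphi_1$ and $\varphi_2$ into morphisms whose codomain or domain is the simple module $V_2$, and then exploit simplicity via Schur's lemma.

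Specifically, applying \eqref{e:frobrec} to $\varphi_1\colon L_1\to V_1\otimes V_2$ yields a nonzero morphism $\tilde\varphi_1\colon V_1^*\otimes L_1\to V_2$, while applying it to $\varphi_2\colon V_2\otimes V_3\to L_2$ yields a nonzero morphism $\tilde\varphi_2\colon V_2\to L_2\otimes V_3^*$. Since $V_2$ is simple, $\tilde\varphi_1$ must be surjective and $\tilde\varphi_2$ must be injective, so the composition $\tilde\varphi_2\circ\tilde\varphi_1\colon V_1^*\otimes L_1\to L_2\otimes V_3^*$ is nonzero.

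Next, I would observe that the composition in the lemma corresponds, under the iterated Frobenius isomorphism $\operatorname{Hom}(L_1\otimes V_3,V_1\otimes L_2)\cong\operatorname{Hom}(V_1^*\otimes L_1,L_2\otimes V_3^*)$, to precisely $\tilde\varphi_2\circ\tilde\varphi_1$. This is the standard naturality of adjunction with respect to composition, and can be checked explicitly by chasing the relevant (co)evaluations: if $\varphi_1(x)=\sum_i v_i\otimes w_i$ and $\{e_j\}$ is a basis of $V_3$ with dual basis $\{e_j^*\}$, then both maps send $f\otimes x\in V_1^*\otimes L_1$ to $\sum_{i,j}\langle f,v_i\rangle\,\varphi_2(w_i\otimes e_j)\otimes e_j^*$. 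Combining this with the preceding paragraph yields that the composition in the lemma is nonzero.

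The second statement follows by a completely parallel argument after swapping the roles of source and target in Frobenius reciprocity; alternatively, it can be deduced from the first by taking right duals of all morphisms via \eqref{e:dualtp} and \eqref{e:dualses}, using that $V_2^*$ is again simple and the identification ${}^*(V^*)\cong V$ recalled in \Cref{ss:clalg}. I expect the only mildly delicate point of the write-up to be the naturality identity invoked above; although it is entirely formal, it requires fixing a consistent convention for the evaluation and coevaluation maps implementing each instance of \eqref{e:frobrec}, after which the proof becomes a short diagram chase.
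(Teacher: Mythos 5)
Your proposal is correct, and the Frobenius-plus-Schur strategy is the natural one. Note that the paper itself gives no proof of this lemma (the statement ends with \qedsymbol, which by the paper's stated convention marks omitted proofs), so there is no official argument to compare with; but your argument is sound. The verification that the two applications of \eqref{e:frobrec} carry the composition $(\operatorname{id}_{V_1}\otimes\varphi_2)\circ(\varphi_1\otimes\operatorname{id}_{V_3})$ to $\tilde\varphi_2\circ\tilde\varphi_1$ is exactly the (co)evaluation chase you sketch, and it works out with the standard conventions: both sides send $f\otimes x$ to $\sum_{i,j}\langle f,v_i\rangle\,\varphi_2(w_i\otimes e_j)\otimes e_j^*$.

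One small inaccuracy: the second statement is not obtained from the first simply by taking right duals. Since \eqref{e:dualtp} reverses the order of tensor factors, dualizing the first statement produces $\varphi_1^*\colon V_2^*\otimes V_1^*\to L_1^*$ and $\varphi_2^*\colon L_2^*\to V_3^*\otimes V_2^*$, i.e., the simple factor $V_2^*$ sits on the \emph{left} in the contracted domain of $\varphi_1^*$ and on the \emph{right} in the codomain of $\varphi_2^*$; this is the mirror image of the configuration in the second statement, not the second statement itself. The cleanest fix is the first option you mention, a parallel argument using the left-dual versions of Frobenius reciprocity, namely $\operatorname{Hom}(V_1\otimes V_2,L_1)\cong\operatorname{Hom}(V_2,{}^*V_1\otimes L_1)$ and $\operatorname{Hom}(L_2,V_2\otimes V_3)\cong\operatorname{Hom}(L_2\otimes{}^*V_3,V_2)$, after which Schur's lemma makes $\tilde\varphi_1$ injective and $\tilde\varphi_2$ surjective, and the same naturality chase concludes.
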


\subsection{Tensor Products and Diagram Subalgebras}
The following result will be used in a crucial moment during the proof of \Cref{t:3lineprime}. A slightly modified argument which avoids the use of \Cref{rsh} is given in \cite{sil}.

\begin{thm}[\cite{kkko}]\label{rsh}
	Let $M$ and $N$ be finite-dimensional simple $U_q(\tilde{\mathfrak{g}})$-modules. Assume further that $M$ is real. Then $M\otimes N$ has a simple socle and a simple head. Moreover, if $\operatorname{soc}(M\otimes N)$ and $\operatorname{hd}(M\otimes N)$ are isomorphic, then $M\otimes N$ is simple.\hfill\qedsymbol
\end{thm}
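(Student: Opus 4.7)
The plan is to follow the approach of \cite{kkko}, based on the theory of renormalized R-matrices. For any two simple finite-dimensional $U_q(\tilde{\lie g})$-modules $M,N$, one starts from a distinguished nonzero intertwiner $r_{M,N}\colon M\otimes N\to N\otimes M$, the normalized R-matrix, constructed by a suitable spectral-parameter renormalization of the universal R-matrix.

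The first step is to show that, when $M$ is real,
\begin{equation*}
\operatorname{Hom}_{U_q(\tilde{\lie g})}(M\otimes N,\, N\otimes M)=\mathbb{F}\cdot r_{M,N}
\end{equation*}
is one-dimensional. The input is that realness gives $\operatorname{End}(M\otimes M)=\mathbb{F}$, so $r_{M,M}$ is a nonzero scalar multiple of the identity. Functoriality of the R-matrix together with the Frobenius reciprocities \eqref{e:frobrec} propagates this uniqueness from the diagonal pair $(M,M)$ to any pair $(M,N)$.

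Next, I would deduce the simplicity of $\operatorname{hd}(M\otimes N)$ and $\operatorname{soc}(M\otimes N)$. Given any simple quotient $H$ of $M\otimes N$, \eqref{e:frobrec} produces an embedding $H\hookrightarrow N\otimes M$, so the composition $M\otimes N\twoheadrightarrow H\hookrightarrow N\otimes M$ is a nonzero intertwiner; by Step~1 it must be a scalar multiple of $r_{M,N}$, forcing $H\cong\operatorname{im}(r_{M,N})$. Hence every simple quotient is isomorphic to $\operatorname{im}(r_{M,N})$, and $\operatorname{hd}(M\otimes N)$ is simple. The argument for the socle is dual, via $r_{N,M}$, and yields the bonus identifications $\operatorname{hd}(M\otimes N)\cong\operatorname{im}(r_{M,N})\cong\operatorname{soc}(N\otimes M)$ and $\operatorname{soc}(M\otimes N)\cong\operatorname{im}(r_{N,M})\cong\operatorname{hd}(N\otimes M)$.

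For the \emph{moreover} claim, set $T:=\operatorname{hd}(M\otimes N)\cong\operatorname{soc}(M\otimes N)$ and consider the canonical endomorphism
\begin{equation*}
\phi\colon M\otimes N\twoheadrightarrow T=\operatorname{soc}(M\otimes N)\hookrightarrow M\otimes N.
\end{equation*}
A standard length argument shows that $\phi\neq 0$ forces socle and radical to split $M\otimes N=T\oplus\operatorname{rad}(M\otimes N)$, at which point the simplicity of the socle forces $\operatorname{rad}(M\otimes N)=0$, so $M\otimes N\cong T$ is simple. The remaining task is to verify $\phi\neq 0$. For this I would identify $\phi$, up to a nonzero scalar, with $r_{N,M}\circ r_{M,N}$: by Step~2 the latter has image contained in $\operatorname{im}(r_{N,M})=T=\operatorname{soc}(M\otimes N)$ and factors through the simple head, so it is a scalar multiple of $\phi$. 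Nonvanishing of this scalar is then established by a rationality/specialization argument in the spectral parameter of $N$, seeded by the case $N=M$ where the composition is a nonzero scalar by realness.

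The main obstacle I expect is Step~1 — the one-dimensionality of $\operatorname{Hom}(M\otimes N, N\otimes M)$ under the realness hypothesis on $M$ — which depends on the delicate spectral-parameter analysis of normalized R-matrices for quantum affine algebras and constitutes the technical core of \cite{kkko}. Once Step~1 is in place, the remaining steps follow by essentially categorical reasoning using the Hopf-algebra facts \eqref{e:frobrec}--\eqref{e:dualses} and \Cref{c:nonzeromorph}.
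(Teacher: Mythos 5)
This statement is quoted from \cite{kkko}; the paper offers no proof of it (the \qedsymbol\ marks an omitted proof), so there is nothing internal to compare your sketch against. Judged on its own terms, your outline does follow the strategy of \cite{kkko}, but two points need attention. First, you defer the entire technical core — the one-dimensionality of $\operatorname{Hom}(M\otimes N,\,N\otimes M)$ for $M$ real, which rests on the analytic theory of renormalized $R$-matrices — so what you have is a reduction to the cited paper's main technical result, not a proof; that is defensible for a quoted theorem, but it should be flagged as such. Granting that input, your deduction of the simplicity of the head and socle via the sub/quotient duality is sound.

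The genuine gap is in the ``moreover'' step. The map $\phi$, being a surjection onto $T$ followed by an injection of $T$, is \emph{automatically} nonzero; nonvanishing of $\phi$ is not the issue and does not by itself yield the splitting $M\otimes N=T\oplus\operatorname{rad}(M\otimes N)$. What the Fitting-type argument actually requires is that $\phi$ be non-nilpotent, i.e.\ that $\operatorname{soc}(M\otimes N)\not\subseteq\operatorname{rad}(M\otimes N)$. In \cite{kkko} this is extracted from the additional statement that $\operatorname{hd}(M\otimes N)$ occurs with multiplicity one in the composition series of $M\otimes N$: if the isomorphic socle sat inside the radical, that composition factor would occur at least twice. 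Your sketch never invokes multiplicity one, and the substitute you propose — showing $r_{N,M}\circ r_{M,N}\neq 0$ by a rationality/specialization argument seeded at $N=M$ — cannot work: that composition is a scalar multiple of the identity whose vanishing is \emph{equivalent} to the reducibility of $M\otimes N$, so it vanishes at precisely the evaluation parameters where you would need it not to, and genericity in the spectral parameter gives no information at a fixed specialization. Replace the last step by the multiplicity-one argument.
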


If $V$ is a highest-$\ell$-weight module with highest-$\ell$-weight vector $v$ and $J\subset I$, we let $V_J$ denote the $U_q(\tlie g)_J$-submodule of $L_q(\bs{\pi})$ generated by $v$. Evidently, if $\bs\pi$ is the highest-$\ell$-weight of $V$, then $V_J$ is highest-$\ell$-weight with highest $\ell$-weight $\bs\pi_J$. Moreover, we have the following well-known facts:

\begin{equation}\label{e:weightsJ}
	V_J = \bigoplus_{\eta\in Q_J^+} V_{\wt(\bs\pi)-\eta}  = \bigoplus_{\eta\in Q_J} V_{\wt(\bs\pi)+\eta}.
\end{equation}

\begin{lem}\label{sl32sl2}
	If $V$ is simple, so is $V_J$.\hfill\qed
\end{lem}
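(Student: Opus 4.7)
The plan is to exploit the canonical $U_q(\tlie g)_J$-epimorphism
\begin{equation*}
\varphi:V_J\twoheadrightarrow L_q(\bs\pi_J),
\end{equation*}
which exists because, by construction, $V_J$ is a highest-$\ell$-weight $U_q(\tlie g)_J$-module with highest-$\ell$-weight vector $v$ and highest $\ell$-weight $\bs\pi_J$. The task reduces to proving that $\varphi$ is injective, i.e., that $V_J$ admits no nonzero proper $U_q(\tlie g)_J$-submodule.

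The argument proceeds by contradiction. Suppose $W\subsetneq V_J$ is a nonzero $U_q(\tlie g)_J$-submodule. By finite-dimensionality, $W$ contains a $U_q(\tlie g)_J$-highest-$\ell$-weight vector $w\in V$ of some $\ell$-weight $\bs\varpi$. Since, by \eqref{e:weightsJ}, the top weight space of $V_J$ is the one-dimensional subspace $\mathbb F v$, and $v\notin W$, one has $\wt(\bs\varpi)\lneq\wt(\bs\pi)$. Simplicity of $V$ as a $U_q(\tlie g)$-module then produces $X\in U_q(\tlie g)$ with $Xw=v$; the weight of $X$ lies in $Q_J^+\setminus\{0\}$, but a priori $X$ may involve root vectors lying outside $U_q(\tlie g)_J$ whose individual weights cancel.

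The main obstacle will be to upgrade this vector-level equation to the $\ell$-weight identity $\bs\varpi=\bs\pi_J$, which would force $w\in\mathbb F v$ and contradict the choice of $w$. The plan is to iterate the application of raising generators $E_i$, $i\notin J$, to $w$: each such application produces a new vector whose failure to be annihilated by the $E_j$, $j\in J$, is controlled by quantum Serre-type commutations with $E_i$ and can be handled inductively by the smaller instances of the same statement applied to $E_iw$. By finite-dimensionality of $V$, after finitely many such steps one reaches the one-dimensional top weight space $\mathbb F v$; tracking the accumulated Cartan $\ell$-eigenvalues along this chain pins down the $J$-components of $\bs\varpi$ to equal those of $\bs\pi$, yielding $\bs\varpi=\bs\pi_J$. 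An alternative route, which sidesteps the Serre bookkeeping, is via the duality machinery of \eqref{rightdual}--\eqref{e:dualtp} applied to $V^*$: one pairs $V_J$ with $(V^*)_J$ (defined analogously via the highest-$\ell$-weight vector of $V^*$) and concludes the simplicity of $V_J$ by a rank comparison on weight components afforded by \eqref{e:weightsJ} together with the fact that $L_q(\bs\pi_J)^*\cong L_q((\bs\pi_J)^*)=L_q((\bs\pi^*)_J)$.
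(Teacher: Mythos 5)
Your first two paragraphs set the problem up correctly (the reduction to showing that a nonzero $U_q(\tlie g)_J$-submodule $W\subseteq V_J$ must contain $v$, the choice of a $U_q(\tlie g)_J$-highest-$\ell$-weight vector $w\in W$, and the identification of the real difficulty: the element $X\in U_q(\tlie g)$ with $Xw=v$ need not lie in $U_q(\tlie g)_J$). But the third paragraph, where that difficulty has to be resolved, does not contain an argument. Worse, the proposed mechanism cannot even get started: by \eqref{e:weightsJ} every weight of $V_J$ is of the form $\wt(\bs\pi)-\eta$ with $\eta\in Q_J^+$, so for $i\notin I\setminus J$... more precisely, for $i\notin J$ the vector $E_i w$ (or $x^+_{i,r}w$) would have weight $\wt(\bs\pi)-\eta+\alpha_i$, which is not $\le\wt(\bs\pi)$; hence these operators annihilate $V_J$ outright and there is no ``chain'' to iterate. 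The appeals to quantum Serre relations, to ``smaller instances of the same statement applied to $E_iw$'' (an induction whose parameter is never specified), and to ``tracking accumulated Cartan $\ell$-eigenvalues'' are not steps of a proof. The alternative duality route is likewise too vague to assess: ``rank comparison on weight components'' is not an argument, and you do not explain how pairing $V_J$ with $(V^*)_J$ detects submodules of $V_J$.

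What actually closes the gap is the triangular decomposition combined with \eqref{e:weightsJ}. Take $X$ of weight $\eta=\wt(\bs\pi)-\wt(w)\in Q_J^+\setminus\{0\}$ and write it as a sum of terms $X^-X^0X^+$ with $X^\pm$ of weights $-\mu,\nu\in -Q^+,Q^+$ and $\nu-\mu=\eta$. Since all weights of $V$ are $\le\wt(\bs\pi)$, a term acts nontrivially on $w$ only if $\nu\le\eta$, and linear independence of the simple roots then forces $\nu\in Q_J^+$; the weight-$\nu$ component of $U_q(\tilde{\mathfrak n}^+)$ is spanned by monomials in the $x^+_{j,r}$ with $j\in J$, so it kills the $U_q(\tlie g)_J$-highest-$\ell$-weight vector $w$ unless $\nu=0$, in which case $\mu=-\eta\notin Q^+$ and the term vanishes anyway. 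Hence $Xw=0$, contradicting $Xw=v$; so $v\in W$ and $W=V_J$. No induction, Serre bookkeeping, or $\ell$-weight comparison is needed. (The paper itself omits the proof of this lemma as well known, so the comparison here is with the standard argument rather than with a written proof.)
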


Since $U_q(\tlie g)_J$ is not a sub-coalgebra of $U_q(\tlie g)$, if $M$ and $N$ are $U_q(\tlie g)_J$-submodules of $U_q(\tlie g)$-modules $V$ and $W$, respectively, it is in general not true that $M\otimes N$ is a $U_q(\tlie g)_J$-submodule of $V\otimes W$.  Recalling that we have an algebra isomorphism $U_q(\tlie g)_J\cong U_{q_J}(\tlie g_J)$, we shall denote by $M\otimes_J N$ the $U_q(\tlie g)_J$-module obtained by using the coalgebra structure from $ U_{q_J}(\tlie g_J)$. The next result describes a special situation on which $M\otimes N$ is a submodule isomorphic to $M\otimes_J N$.

\begin{prop}[{\cite[Proposition 2.2]{cp:minsl}}]\label{p:subJtp}
	Let $V$ and $W$ be finite-dimensional highest-$\ell$-weight modules with highest $\ell$-weights $\bs\pi,\bs\varpi\in\mathcal P^+$, respectively, and let $J\subseteq I$ be a connected subdiagram. Then, $V_J\otimes W_J$ is a $U_q(\tlie g)_J$-submodule of $V\otimes W$ isomorphic to $V_J\otimes_J W_J$ via the identity map.\hfill\qed
\end{prop}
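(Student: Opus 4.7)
The natural approach is to take $\Phi : V_J \otimes_J W_J \to V \otimes W$ to be the identity on pure tensors, so that its image coincides set-theoretically with $V_J \otimes W_J \subseteq V \otimes W$, and then to verify that $\Phi$ is a $U_q(\tlie g)_J$-module morphism. Injectivity and the identification of the image with $V_J \otimes W_J$ are immediate from the definition, so the entire content is to show that for every $x \in U_q(\tlie g)_J$ the two coproducts $\Delta$ (of $U_q(\tlie g)$) and $\Delta_J$ (of $U_{q_J}(\tlie g_J)$, transported across $U_q(\tlie g)_J \cong U_{q_J}(\tlie g_J)$) induce the same linear endomorphism of $V_J \otimes W_J$. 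Because both $\Delta$ and $\Delta_J$ are algebra homomorphisms, it suffices to check this on any generating set of $U_{q_J}(\tlie g_J)$ — for instance, its Drinfeld--Jimbo Chevalley generators $E_j, F_j, K_j^{\pm 1}$ with $j$ running over the affine diagram $\hat J := J \cup \{0_J\}$.

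For the nodes $j \in J$, the images of $E_j, F_j, K_j^{\pm 1}$ in $U_q(\tlie g)$ are exactly the corresponding Drinfeld--Jimbo Chevalley generators of $U_q(\tlie g)$ attached to $j$, and the standard coproduct formulas
\begin{equation*}
\Delta(E_j) = E_j \otimes 1 + K_j \otimes E_j, \quad \Delta(F_j) = F_j \otimes K_j^{-1} + 1 \otimes F_j, \quad \Delta(K_j^{\pm 1}) = K_j^{\pm 1} \otimes K_j^{\pm 1}
\end{equation*}
visibly lie in $U_q(\tlie g)_J \otimes U_q(\tlie g)_J$ and agree verbatim with $\Delta_J$ applied to the corresponding generators of $U_{q_J}(\tlie g_J)$. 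Consequently the finite-type subalgebra $U_q(\lie g_J) \subseteq U_q(\tlie g)_J$ poses no difficulty and acts on $V_J \otimes W_J$ through the same rule on either side.

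The main obstacle is the affine node $0_J$ of $\hat J$. The Chevalley triple $E_{0_J}, F_{0_J}, K_{0_J}^{\pm 1}$ of $U_{q_J}(\tlie g_J)$ is not a Chevalley triple of $U_q(\tlie g)$; rather, it is realized inside $U_q(\tlie g)_J$ by certain elements built from the loop-like Drinfeld generators $x_{j,r}^\pm, h_{j,s}$ with $j \in J$. For these elements, the coproduct $\Delta$ (pulled from the Drinfeld--Jimbo side through Beck's isomorphism) is considerably more intricate and can in principle produce summands whose tensor factors involve generators $x_{i,s}^\pm$ with $i \notin J$. The plan is to decompose $\Delta(x) = \Delta_J(x) + R(x)$, where every summand of $R(x)$ has at least one tensor factor of $Q$-weight outside $Q_J$, and then to prove that $R(x)$ annihilates $V_J \otimes W_J$. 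Here I would combine two ingredients: the weight-grading description \eqref{e:weightsJ}, which confines $V_J$ and $W_J$ to the weights $\wt(\bs\pi) - Q_J^+$ and $\wt(\bs\varpi) - Q_J^+$, and the fact that the highest-$\ell$-weight vectors generating $V_J$ and $W_J$ are annihilated by every positive loop generator $x_{i,s}^+$. A $Q_J^+$-height induction on a spanning monomial of $U_q(\tlie g)_J$ applied to the top tensor $v \otimes w$ should then propagate the vanishing of $R(x)$ from its action on $v \otimes w$ to all of $V_J \otimes W_J$, completing the verification of compatibility. The most delicate point — and what I expect to be the hardest — is precisely this control of the ``extra'' terms in $\Delta$ for the loop-like generators, as it is here that the failure of $U_q(\tlie g)_J$ to be a sub-coalgebra of $U_q(\tlie g)$ must be reconciled with the specific structure of highest-$\ell$-weight submodules.
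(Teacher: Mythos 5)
First, note that the paper itself offers no proof of this statement: it is quoted from Chari--Pressley \cite[Proposition 2.2]{cp:minsl} and closed with \qedsymbol\ to signal that the proof is omitted. Measured against the cited proof, your outline does reproduce its overall strategy: reduce to generators, observe that the generators attached to the nodes of $J$ itself cause no difficulty, and show that the ``error'' in the coproduct of the remaining loop-like generators annihilates $V_J\otimes W_J$. The two points you call immediate are indeed fine; in particular the stability of $V_J\otimes W_J$ under $\Delta(x)$ for $x\in U_q(\tlie g)_J$ follows from the identity $V_J\otimes W_J=\bigoplus_{\eta\in Q_J^+}(V\otimes W)_{\wt(\bs\pi)+\wt(\bs\varpi)-\eta}$ of \eqref{e:sJs}, since $\eta_1+\eta_2\in Q_J^+$ with $\eta_1,\eta_2\in Q^+$ forces $\eta_1,\eta_2\in Q_J^+$.

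The gap is that the decisive step is announced rather than carried out, and the intermediate claim you hang it on is not correct as stated. You posit a decomposition $\Delta(x)=\Delta_J(x)+R(x)$ in which \emph{every} summand of $R(x)$ has a tensor factor of weight outside $Q_J$; granted that, the weight argument does close (if $\wt(b)$ has a positive coefficient on some $\alpha_l$ with $l\notin J$ then $b$ kills $W_J$ by \eqref{e:weightsJ}, and otherwise $\wt(a)$ does and $a$ kills $V_J$). But no such decomposition exists without a genuine normal-form analysis: the coproduct of a loop generator indexed by $j\in J$ contains summands both of whose factors have weight in $Q_J$ and yet are not part of $\Delta_J(x)$ (for instance terms built from imaginary root vectors and from real root vectors of weight in $Q_J$), as well as summands involving generators $x^{\pm}_{l,r}$ with $l\notin J$ whose factors nevertheless have weight in $Q_J$ --- and for these the vanishing on $V_J\otimes W_J$ depends on the ordering, not on the weight: a factor of the form $x^{+}_{l,r}x^{-}_{l,r'}u$ has weight in $Q_J$ and need not annihilate $V_J$, whereas the reordered $x^{-}_{l,r'}x^{+}_{l,r}u$ does. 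The cited proof controls $R(x)$ precisely through the explicit congruences of Damiani and Chari--Pressley for $\Delta$ on the Drinfeld generators, modulo subspaces of the form $U\mathcal X^{-}\otimes U(\mathcal X^{+})^{2}$ with $\mathcal X^{\pm}$ the span of the loop raising/lowering generators, combined with the weight constraints above. Since this is exactly the point you defer as ``the most delicate,'' the proposal is a reasonable plan that matches the known argument in outline, but it is not yet a proof.
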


\begin{cor}\label{c:sJs}
	In the notation of \Cref{p:subJtp}, if $V\otimes W$ is highest-$\ell$-weight, so is $V_J\otimes W_J$. Moreover, if $V\otimes W$ is simple, so is $V_J\otimes W_J$.\qed
\end{cor}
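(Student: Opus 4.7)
The plan is to identify $V_J\otimes W_J$ with the cyclic $U_q(\tlie g)_J$-submodule of $V\otimes W$ generated by the highest-$\ell$-weight vector $v\otimes w$, after which both assertions will fall out at once. One direction is free: by \Cref{p:subJtp}, $V_J\otimes W_J$ is a $U_q(\tlie g)_J$-submodule of $V\otimes W$ containing $v\otimes w$, so $U_q(\tlie g)_J\cdot(v\otimes w)\subseteq V_J\otimes W_J$. The content lies in the reverse inclusion, which will be where the hypothesis that $V\otimes W$ is highest-$\ell$-weight enters.

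To establish the reverse inclusion, I would work in Drinfeld's realization and use the standard fact that a highest-$\ell$-weight module is already generated by its highest-$\ell$-weight vector under the negative subalgebra $U_q^-(\tlie g)$, the subalgebra generated by the loop generators $X^-_{i,r}$ with $i\in I$, $r\in\mathbb Z$. Hence $V\otimes W=U_q^-(\tlie g)\cdot(v\otimes w)$. This subalgebra is $-Q^+$-graded with each $X^-_{i,r}$ homogeneous of finite weight $-\alpha_i$ independently of $r$. By \eqref{e:weightsJ}, the weight spaces that make up $V_J\otimes W_J$ sit in $\wt(\bs\pi)+\wt(\bs\varpi)-Q_J^+$, and the elementary observation that $\sum_{i\in I}n_i\alpha_i\in Q_J^+$ with $n_i\in\mathbb Z_{\ge 0}$ forces $n_i=0$ for $i\notin J$ implies that only monomials in the $X^-_{i,r}$ with $i\in J$ can contribute to such weight spaces. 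Any such monomial lies in the Drinfeld negative part $U_q^-(\tlie g_J)\subseteq U_q(\tlie g)_J$, yielding the reverse inclusion.

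With the identification in hand, the first assertion is immediate, since $V_J\otimes W_J$ has been realized as a cyclic $U_q(\tlie g)_J$-module generated by an $\ell$-weight vector of $\ell$-weight $\bs\pi_J\bs\varpi_J$. For the second, if $V\otimes W$ is simple then it is in particular highest-$\ell$-weight, so the identification applies and yields $V_J\otimes W_J=(V\otimes W)_J$; \Cref{sl32sl2} applied to the simple $V\otimes W$ then produces the simplicity of this submodule. The only step beyond routine bookkeeping is the weight-grading argument for $U_q^-(\tlie g)$, which is standard in Drinfeld's realization, so I do not anticipate any genuine obstacle.
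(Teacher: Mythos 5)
Your argument is correct. The paper omits the proof of this corollary (the \qedsymbol\ here marks a statement whose proof is omitted), and what you supply is exactly the standard argument it implicitly relies on: the decomposition $V_J\otimes W_J=\bigoplus_{\eta\in Q_J^+}(V\otimes W)_{\wt(\bs\pi)+\wt(\bs\varpi)-\eta}$ recorded as \eqref{e:sJs}, combined with the triangular decomposition in the Drinfeld realization and the weight argument, identifies $V_J\otimes W_J$ with $U_q(\tlie g)_J\cdot(v\otimes w)$ when $V\otimes W$ is highest-$\ell$-weight, after which \Cref{sl32sl2} disposes of the simplicity claim.
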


\begin{lem}[{cf. \cite[Lemma 5.4]{cp:dorey}}]\label{p:subJrest}
	Let $U,V,W$ be finite-dimensional highest-$\ell$-weight modules with highest $\ell$-weights $\bs\omega,\bs\pi,\bs\varpi\in\mathcal P^+$, respectively, and  let $J\subseteq I$ be a connected subdiagram. 
	If
	\begin{equation}\label{e:subJrest}
		\wt(\bs\pi)+ \wt(\bs\varpi)-\wt(\bs\omega) \in Q_J^+,
	\end{equation}
	restriction induces natural  linear maps  ${\rm Hom}_{U_q(\tlie g)}(V\otimes W,U) \to {\rm Hom}_{U_q(\tlie g)_J}(V_J\otimes W_J,U_J)$ and ${\rm Hom}_{U_q(\tlie g)}(U,V\otimes W) \to {\rm Hom}_{U_q(\tlie g)_J}(U_J,V_J\otimes W_J)$. Moreover, the latter is injective  and the former is injective if $U$ is simple.
\end{lem}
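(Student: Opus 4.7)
The plan is to combine \Cref{p:subJtp}, which identifies $V_J\otimes W_J$ with a $U_q(\tlie g)_J$-submodule of $V\otimes W$, with a careful weight analysis driven by the hypothesis $\wt(\bs\pi)+\wt(\bs\varpi)-\wt(\bs\omega)\in Q_J^+$ and the elementary fact $Q^+\cap Q_J=Q_J^+$.

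First I would verify that both restriction maps are well-defined. For the covariant map, given $\varphi:V\otimes W\to U$, its restriction to $V_J\otimes W_J$ is automatically a $U_q(\tlie g)_J$-homomorphism by \Cref{p:subJtp}. Its image consists of weight vectors with weights of the form $\wt(\bs\pi)+\wt(\bs\varpi)-\eta$ for $\eta\in Q_J^+$. Such a weight equals $\wt(\bs\omega)-\xi$ for some $\xi\in Q^+$ since $U$ is highest-$\ell$-weight with highest weight $\wt(\bs\omega)$; the hypothesis then forces $\xi=\eta-(\wt(\bs\pi)+\wt(\bs\varpi)-\wt(\bs\omega))\in Q^+\cap Q_J=Q_J^+$, placing the image inside $U_J$ by \eqref{e:weightsJ}. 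For the contravariant map, given $\varphi:U\to V\otimes W$, a parallel computation writes each weight $\wt(\bs\omega)-\mu$, $\mu\in Q_J^+$, as a sum of tensor weights $\wt(\bs\pi)-\alpha$ and $\wt(\bs\varpi)-\beta$ with $\alpha+\beta=\mu+(\wt(\bs\pi)+\wt(\bs\varpi)-\wt(\bs\omega))\in Q_J^+$, which together with $\alpha,\beta\in Q^+$ forces $\alpha,\beta\in Q_J^+$; hence the image of $U_J$ is contained in $V_J\otimes W_J$.

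For the injectivity statements, I would argue as follows. The contravariant case is immediate: $U$ is generated as a $U_q(\tlie g)$-module by its highest-$\ell$-weight vector $u$, and $u\in U_J$, so any $\varphi$ whose restriction to $U_J$ is zero must vanish on $u$ and hence on all of $U$. For the covariant case, assume $U$ is simple and $\varphi\neq 0$; then $\varphi$ is surjective and some $x\in V\otimes W$ satisfies $\varphi(x)=u$ for a highest-$\ell$-weight vector $u$ of $U$. Replacing $x$ by its weight-$\wt(\bs\omega)$ component, the same computation as above forces $x\in V_J\otimes W_J$, yielding $\varphi|_{V_J\otimes W_J}(x)=u\neq 0$.

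The main obstacle is just organizing the weight bookkeeping cleanly in both directions; once the observation $Q^+\cap Q_J=Q_J^+$ is combined with the hypothesis, each check reduces to a short calculation. No further representation-theoretic input beyond \Cref{p:subJtp} and the standard description of the weights of a highest-$\ell$-weight module is needed.
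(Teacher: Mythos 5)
Your proposal is correct and follows essentially the same route as the paper: identify $V_J\otimes W_J$ inside $V\otimes W$ via \Cref{p:subJtp} (equivalently \eqref{e:sJs}), use the hypothesis \eqref{e:subJrest} together with the weight decomposition \eqref{e:weightsJ} and the elementary fact $Q^+\cap Q_J=Q_J^+$ to see that the restrictions land in the right modules, and obtain injectivity from the highest-$\ell$-weight vector (surjectivity of a nonzero map onto the simple $U$ in the covariant case, and generation of $U$ by $u$ in the contravariant case). Your write-up merely makes explicit some weight bookkeeping the paper leaves implicit; no substantive difference.
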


\begin{proof}
	It was shown in the proof of  \Cref{p:subJtp} that
	\begin{equation}\label{e:sJs}
		V_J\otimes W_J = \opl_{\eta\in Q_J^+}^{} (V\otimes W)_{\wt(\bs\pi)+\wt(\bs\varpi)-\eta}.
	\end{equation}	
	Let $f\in {\rm Hom}_{U_q(\tlie g)}(V\otimes W,U)$ and denote by $f_J$ its restriction to $V_J\otimes W_J$. It follows from \eqref{e:subJrest}, \eqref{e:weightsJ}, and \eqref{e:sJs} that the image of $f_J$ lies in $U_J$. Thus, we have a natural linear map  $\phi:{\rm Hom}_{U_q(\tlie g)}(V\otimes W,U) \to {\rm Hom}_{U_q(\tlie g)_J}(V_J\otimes W_J,U_J), f\mapsto f_J$. If $U$ is simple and $f\ne 0$, then the image of $f$ contains a highest-$\ell$-weight vector $u$ for $U$ which, by \eqref{e:weightsJ} and \eqref{e:sJs}, must be in the image of $f_J$. It follows that $f_J\ne 0$, showing $\phi$ is injective.
	
	Now let $f\in {\rm Hom}_{U_q(\tlie g)}(U,V\otimes W)$ and denote by $f_J$ its restriction to $U_J$. It follows from \eqref{e:subJrest}, \eqref{e:weightsJ}, and \eqref{e:sJs} that the image of $f_J$ lies in  $V_J\otimes W_J$ and, hence,  we have a natural linear map $\phi:{\rm Hom}_{U_q(\tlie g)}(U,V\otimes W) \to {\rm Hom}_{U_q(\tlie g)_J}(U_J,V_J\otimes W_J), f\mapsto f_J$. Let $u$ be a highest-$\ell$-weight vector for $U$. Then, $f$ is completely determined by its value on $u$. In particular, $f_J=0$ if and only if $f=0$ showing $\phi$ is injective. 	 
\end{proof}

\subsection{Highest-$\ell$-weight Tensor Products}\label{ss:hlwtp}
We record some results from \cite{mosi} which will be used in the proof of \Cref{t:3lineprime}.

\begin{lem}[{\cite[Lemma 4.1.2]{mosi}}]\label{l:subsandq}
	Let $\bs\pi,\bs\pi'\in\mathcal P^+$, $V=L_q(\bs\pi)\otimes L_q(\bs\pi')$, and $W=L_q(\bs\pi')\otimes L_q(\bs\pi)$.
	\begin{enumerate}[(a)]
		\item $V$ contains a submodule isomorphic to $L_q(\bs\varpi), \bs\varpi\in\mathcal P^+$, if and only there exists an epimorphism $W\to L_q(\bs\varpi)$.
		\item If $W$ is highest-$\ell$-weight, the submodule of $V$ generated by its top weight space is simple.
		\item If $V$ is not highest-$\ell$-weight, there exists an epimorphism $V\to L_q(\bs\varpi)$ for some $\bs\varpi\in\mathcal P^+$ such that $\bs\varpi<\bs\pi\bs\pi'$. \qed
	\end{enumerate}
\end{lem}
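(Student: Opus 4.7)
The plan is to establish part (a) first, then derive (b) from it; part (c) is essentially independent and proceeds via a direct top-weight argument.

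For (a), the claim is equivalent to $\operatorname{Hom}_{U_q(\tlie g)}(L_q(\bs\varpi), V) \ne 0 \Leftrightarrow \operatorname{Hom}_{U_q(\tlie g)}(W, L_q(\bs\varpi)) \ne 0$, and my first observation is that the two directions are linked by duality. Indeed, applying $(\cdot)^*$ to an embedding $L_q(\bs\varpi)\hookrightarrow V$ and invoking \eqref{e:dualses} and \eqref{e:dualtp}, together with the fact that the Hopf-algebraic dual of a simple module is itself simple with a specific dualized Drinfeld polynomial (a consequence of \eqref{e:dualgraphs}), one obtains a surjection $V^* \twoheadrightarrow L_q(\bs\varpi)^*$ in which $V^*$ is itself a tensor product of two simples in the opposite order. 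This shows that the $\Leftarrow$ direction for the original data follows from the $\Rightarrow$ direction for the dualized data, so it suffices to prove one implication. For the implication $L_q(\bs\varpi)\hookrightarrow V\Rightarrow W\twoheadrightarrow L_q(\bs\varpi)$, I would invoke the existence of the canonical nonzero intertwiner $V \to W$ provided by the normalized $R$-matrix theory for finite-dimensional representations of $U_q(\tlie g)$; its image is simultaneously a quotient of $V$ and a submodule of $W$, and the desired surjection onto $L_q(\bs\varpi)$ is extracted by tracking how simple constituents of the socle of $V$ are transported to the head of $W$ through this intertwiner.

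For (b), let $v,v'$ denote highest-$\ell$-weight vectors of $L_q(\bs\pi),L_q(\bs\pi')$. The top weight space of $V$ equals $\mathbb F\cdot(v\otimes v')$, so $v\otimes v'$ is automatically a highest-$\ell$-weight vector of $\ell$-weight $\bs\pi\bs\pi'$, making $M:=U_q(\tlie g)\cdot(v\otimes v')$ a highest-$\ell$-weight submodule of $V$ with highest $\ell$-weight $\bs\pi\bs\pi'$. The hypothesis that $W$ is highest-$\ell$-weight gives a surjection $W \twoheadrightarrow L_q(\bs\pi\bs\pi')$, and by part (a) this produces an embedded copy of $L_q(\bs\pi\bs\pi')$ inside $V$. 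That embedded copy must contain a nonzero vector in the one-dimensional top weight space of $V$, namely a scalar multiple of $v\otimes v'$, and hence coincides with $M$; consequently $M\cong L_q(\bs\pi\bs\pi')$ is simple.

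For (c), suppose $V$ is not highest-$\ell$-weight. Then $M:=U_q(\tlie g)\cdot(v\otimes v')$ is a proper submodule of $V$, so $V/M$ is nonzero; and since $v\otimes v'\in M$, every weight of $V/M$ is strictly below $\wt(\bs\pi\bs\pi')$. Any simple quotient $L_q(\bs\varpi)$ of $V/M$---automatically a simple quotient of $V$---then satisfies $\bs\varpi<\bs\pi\bs\pi'$. The principal obstacle is part (a): producing the canonical intertwiner $V\to W$ (or an equivalent argument manipulating Hom spaces directly via \Cref{c:nonzeromorph} and \eqref{e:frobrec}) in a form that makes the transfer of socle constituents in $V$ to head constituents in $W$ transparent. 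Once (a) is in hand, the one-dimensionality of the top weight space of $V$ makes (b) and (c) quick.
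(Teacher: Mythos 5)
First, a remark on the comparison itself: the paper does not prove this lemma. It is imported verbatim from \cite[Lemma 4.1.2]{mosi} with the proof omitted (the \qedsymbol{} after the statement marks exactly that), so there is no argument in the present text to measure yours against; I can only assess the proposal on its own terms. Your parts (b) and (c) are correct and standard: the top weight space of $V$ is one-dimensional, spanned by the highest-$\ell$-weight vector $v\otimes v'$ of $\ell$-weight $\bs\pi\bs\pi'$, so an embedded copy of $L_q(\bs\pi\bs\pi')$ must contain $v\otimes v'$ and therefore coincide with $U_q(\tlie g)(v\otimes v')$; and if that cyclic submodule is proper, any simple quotient of the (nonzero) quotient module has strictly smaller highest weight. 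The reduction of (a) to a single implication via the contravariant duality functor, \eqref{e:dualtp}, \eqref{e:dualses}, and $L_q(\bs\mu)^*\cong L_q(\bs\mu^*)$, is also sound.

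The gap is in the remaining implication of (a). The existence of a nonzero intertwiner $\mathbf r\colon V\to W$ does not allow you to ``transport simple constituents of the socle of $V$ to the head of $W$''. Two things go wrong. First, a simple submodule $L_q(\bs\varpi)\subseteq V$ may perfectly well lie in $\ker\mathbf r$, in which case nothing is transported; composing $L_q(\bs\varpi)\hookrightarrow V\xrightarrow{\mathbf r}W$ could be zero, and even when it is not, it is injective (the source is simple) and so exhibits $L_q(\bs\varpi)$ as a \emph{submodule} of $W$, which is not what is wanted. Second, $\operatorname{Im}(\mathbf r)$ is a submodule of $W$, so knowing that $L_q(\bs\varpi)$ occurs in $\operatorname{Im}(\mathbf r)$ only makes it a subquotient of $W$, not a quotient. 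The precise statements tying $\operatorname{soc}(V)$, $\operatorname{hd}(W)$, and the image of the renormalized $R$-matrix together are exactly the content of \Cref{rsh} and require a reality hypothesis that is not available here. Note finally that the purely formal adjunctions \eqref{e:frobrec} cannot close this gap either: applying them converts the assertion ``$L_q(\bs\varpi)\hookrightarrow L_q(\bs\pi)\otimes L_q(\bs\pi')$ iff $L_q(\bs\pi')\otimes L_q(\bs\pi)\twoheadrightarrow L_q(\bs\varpi)$'' into the same assertion for the triple $(\bs\varpi,\bs\pi^*,\bs\pi')$, so rigidity of the category merely permutes the statement among triples of simples. Some genuine representation-theoretic input is required for this direction, and the proposal does not supply it.
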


\begin{prop}[{\cite[Corollary 4.1.4]{mosi}}]\label{c:vnvstar}
	Let $\bs{\pi}, \bs\varpi\in\mathcal{P}^+$. Then, $L_q(\bs{\pi})\otimes L_q(\bs\varpi)$ is simple if and only if both $L_q(\bs{\pi})\otimes L_q(\bs\varpi)$ and $L_q(\bs\varpi)\otimes L_q(\bs{\pi})$ are highest-$\ell$-weight.\qed
\end{prop}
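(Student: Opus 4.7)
The plan is to prove both directions of the equivalence by a clean application of the submodule/quotient duality encoded in Lemma \ref{l:subsandq}, together with the standard fact that the top weight space $V_{\wt(\bs\pi\bs\varpi)}$ of either $V := L_q(\bs\pi)\otimes L_q(\bs\varpi)$ or $W := L_q(\bs\varpi)\otimes L_q(\bs\pi)$ is one-dimensional (being the tensor product of the two one-dimensional top weight spaces) and is generated, in the ``highest-$\ell$-weight'' case, by the obvious tensor of highest $\ell$-weight vectors.

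For the forward direction, suppose $V$ is simple. The submodule generated by the top-weight tensor is nonzero, hence equals $V$, so $V$ is highest-$\ell$-weight and therefore isomorphic to $L_q(\bs\pi\bs\varpi)$. In particular $V$ contains itself as a submodule isomorphic to $L_q(\bs\pi\bs\varpi)$, and Lemma \ref{l:subsandq}(a) (with the roles of $\bs\pi$ and $\bs\varpi$ swapped) yields an epimorphism $W\twoheadrightarrow L_q(\bs\pi\bs\varpi)$. Were $W$ not highest-$\ell$-weight, Lemma \ref{l:subsandq}(c) would furnish an epimorphism $W\twoheadrightarrow L_q(\bs\xi)$ with $\bs\xi<\bs\varpi\bs\pi$; re-applying Lemma \ref{l:subsandq}(a) would then produce a nonzero submodule of $V$ isomorphic to $L_q(\bs\xi)$, contradicting the simplicity of $V$ (since $\wt(\bs\xi)<\wt(V)$).

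For the backward direction, assume both $V$ and $W$ are highest-$\ell$-weight. Each then admits an epimorphism onto $L_q(\bs\pi\bs\varpi)$. Applying Lemma \ref{l:subsandq}(a) to $W\twoheadrightarrow L_q(\bs\pi\bs\varpi)$ produces a submodule $M\subseteq V$ isomorphic to $L_q(\bs\pi\bs\varpi)$. Since $M$ must contain a vector of weight $\wt(\bs\pi\bs\varpi)$ and that weight space of $V$ is one-dimensional and generates $V$, we conclude $M=V$, so $V$ is simple.

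No serious obstacle is anticipated; the entire argument is a tidy bookkeeping of the three parts of Lemma \ref{l:subsandq}. The only point requiring care is the dimension-one top weight space argument used to promote a submodule isomorphic to $L_q(\bs\pi\bs\varpi)$ into the full module, and this is immediate from the factorization of the top weight space together with the hypothesis that $V$ is highest-$\ell$-weight.
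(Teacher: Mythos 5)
Your proof is correct, and it is essentially the intended argument: the paper quotes this result from \cite{mosi} without proof, but states Lemma \ref{l:subsandq} immediately beforehand precisely because the proof is a direct bookkeeping of its three parts together with the one-dimensionality of the top weight space, exactly as you carry out. The only cosmetic difference is that in the backward direction one can shortcut via Lemma \ref{l:subsandq}(b) (the submodule of $V$ generated by its top weight space is simple, and equals $V$ since $V$ is highest-$\ell$-weight), whereas you route through part (a); both are valid.
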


The ``if'' part  of the following theorem is the main result of \cite{Hernandez2} and a proof of the converse can be found in \cite[Theorem 4.1.5]{mosi}.

\begin{thm}\label{cyc}
	Let $S_1,\cdots, S_m$ be simple $U_q(\tlie g)$-modules. Then, $S_1\otimes\cdots\otimes S_m$ is highest-$\ell$-weight if and only if  $S_i\otimes S_j$ is highest-$\ell$-weight for all $1\leq i<j\leq m$. \qed
\end{thm}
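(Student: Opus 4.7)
The ``if'' direction is the cyclicity theorem of \cite{Hernandez2}, so I focus on the converse. I would induct on $m$, with $m=2$ being tautological. In the inductive step, let $T=S_1\otimes\cdots\otimes S_m$ be highest-$\ell$-weight with $S_k=L_q(\bs\pi_k)$ and $\bs\Pi=\bs\pi_1\cdots\bs\pi_m$.

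The engine of the argument is the observation that any nonzero quotient of a highest-$\ell$-weight module retains the same top $\ell$-weight, since the generating top vector must map to a nonzero vector of that weight. In particular, every contiguous subproduct $P_{k,l}=S_k\otimes\cdots\otimes S_l$ of $T$ must itself be highest-$\ell$-weight: otherwise \Cref{l:subsandq}(c) would furnish an epimorphism $P_{k,l}\twoheadrightarrow L_q(\bs\varpi)$ with $\bs\varpi<\bs\pi_k\cdots\bs\pi_l$, and tensoring with the identities on the remaining slots would produce a nonzero quotient of $T$ of top $\ell$-weight strictly less than $\bs\Pi$, a contradiction. Applying the induction hypothesis to $P_{1,m-1}$ and $P_{2,m}$ then yields the highest-$\ell$-weight property of $S_i\otimes S_j$ for every pair except possibly the ``endpoint'' pair $(S_1,S_m)$.

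For the endpoint pair, the highest-$\ell$-weight subproduct $S_2\otimes\cdots\otimes S_{m-1}$ admits an epimorphism onto $L_q(\bs\pi_2\cdots\bs\pi_{m-1})$; tensoring by identities on $S_1$ and $S_m$ produces a highest-$\ell$-weight three-factor quotient $S_1\otimes L_q(\bs\pi_2\cdots\bs\pi_{m-1})\otimes S_m$ of $T$. The problem thereby reduces to the base case $m=3$: \emph{if $A\otimes B\otimes C$ is highest-$\ell$-weight, then $A\otimes C$ is highest-$\ell$-weight}. Here the contiguous pairs $A\otimes B$ and $B\otimes C$ are highest-$\ell$-weight by the previous paragraph, giving highest-$\ell$-weight quotients $L_q(\bs\pi_A\bs\pi_B)\otimes C$ and $A\otimes L_q(\bs\pi_B\bs\pi_C)$ of $A\otimes B\otimes C$. \Cref{l:subsandq}(a) then places $L_q(\bs\Pi)$ as a submodule of both $C\otimes L_q(\bs\pi_A\bs\pi_B)$ and $L_q(\bs\pi_B\bs\pi_C)\otimes A$; using \Cref{l:subsandq}(b) to identify $L_q(\bs\pi_A\bs\pi_B)\hookrightarrow B\otimes A$ and $L_q(\bs\pi_B\bs\pi_C)\hookrightarrow C\otimes B$, these embed into $C\otimes B\otimes A$, exhibiting $L_q(\bs\Pi)$ as a submodule of the latter.

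The main obstacle is the final step of this base case: extracting, from the above submodule information, a nonzero morphism $A\otimes C\to L_q(\bs\pi_A\bs\pi_C)$ (equivalently, via \Cref{l:subsandq}(a), a submodule $L_q(\bs\pi_A\bs\pi_C)\hookrightarrow C\otimes A$). I expect the cleanest route to be an application of \Cref{c:nonzeromorph} to a composition of the nonzero morphisms already established, combined with Frobenius reciprocity \eqref{e:frobrec} to transfer the containment of $L_q(\bs\Pi)$ in $C\otimes B\otimes A$ to a statement about $A\otimes C$, using that $B$ sits at the ``middle'' and that the image of its highest vector is $1$-dimensional in the top weight space. Once this is achieved, the induction closes and the endpoint pair is handled, completing the theorem.
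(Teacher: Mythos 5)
The paper does not actually prove this theorem: the ``if'' direction is attributed to \cite{Hernandez2} and the converse to \cite[Theorem 4.1.5]{mosi}, so your attempt has to be judged on its own terms. Most of it is sound: the observation that every contiguous subproduct of a highest-$\ell$-weight tensor product is again highest-$\ell$-weight is correct (a nonzero quotient of a highest-$\ell$-weight module is generated by the image of the top vector, so it cannot have all of its weights strictly below $\wt(\bs\Pi)$), and together with the induction it correctly reduces everything to the three-factor statement. The gap is in the last step, on two counts. First, you never actually produce the argument there; you only say you ``expect'' a route via \Cref{c:nonzeromorph} and \eqref{e:frobrec} to work. Second, and more seriously, the target you set is too weak: an epimorphism $A\otimes C\to L_q(\bs\pi_A\bs\pi_C)$ does \emph{not} imply that $A\otimes C$ is highest-$\ell$-weight. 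It only says that $L_q(\bs\pi_A\bs\pi_C)$ occurs in the head of $A\otimes C$, whereas highest-$\ell$-weight means $A\otimes C$ is generated by $v_A\otimes v_C$, equivalently (by \Cref{l:subsandq}(c)) that it has no simple quotient $L_q(\bs\varpi)$ with $\bs\varpi<\bs\pi_A\bs\pi_C$. For instance, whenever $A$ is real, \Cref{rsh} gives $A\otimes C$ a simple head, which contains the one-dimensional top weight space and is therefore isomorphic to $L_q(\bs\pi_A\bs\pi_C)$ regardless of whether $A\otimes C$ is highest-$\ell$-weight; so the morphism you aim to construct exists for free in that case and proves nothing.

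The step can in fact be closed with exactly the tools you have assembled, but run as a contradiction rather than a direct construction. Suppose $A\otimes C$ is not highest-$\ell$-weight and let $\phi:A\otimes C\to L_q(\bs\varpi)$, $\bs\varpi<\bs\pi_A\bs\pi_C$, be the epimorphism furnished by \Cref{l:subsandq}(c). Since $B\otimes C$ is highest-$\ell$-weight, composing $B\otimes C\twoheadrightarrow L_q(\bs\pi_B\bs\pi_C)$ with the inclusion $L_q(\bs\pi_B\bs\pi_C)\hookrightarrow C\otimes B$ from \Cref{l:subsandq}(a)--(b) yields a nonzero map $\varphi:B\otimes C\to C\otimes B$ sending $v_B\otimes v_C$ to a nonzero multiple of $v_C\otimes v_B$. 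The second half of \Cref{c:nonzeromorph}, applied with $V_1=A$, $V_2=C$, $V_3=B$, $L_2=B\otimes C$, $L_1=L_q(\bs\varpi)$, says the composition
\begin{equation*}
A\otimes B\otimes C\xrightarrow{\ \operatorname{id}_A\otimes\varphi\ }A\otimes C\otimes B\xrightarrow{\ \phi\otimes\operatorname{id}_B\ }L_q(\bs\varpi)\otimes B
\end{equation*}
does not vanish. On the other hand, it kills $v_A\otimes v_B\otimes v_C$, because $\phi$ vanishes on the top weight space of $A\otimes C$; since $A\otimes B\otimes C$ is generated by $v_A\otimes v_B\otimes v_C$, the composition would have to be zero. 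This contradiction completes the three-factor case and hence your induction.
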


The following corollary (\cite[Corollary 4.1.6]{mosi}) was prominently used in \cite{mosi} and will be even more frequently utilized here.
   
\begin{cor}\label{l:hlwquot}
	Given $\bs{\pi},\widetilde{\bs{\pi}}\in\mathcal{P}^+$, $L_q(\bs{\pi})\otimes L_q(\widetilde{\bs{\pi}})$ is highest-$\ell$-weight if there exist 
	$\bs\pi^{(k)}\in\mathcal P^+, 1\le k\le m$, $\widetilde{\bs\pi}^{(k)}\in\mathcal P^+, 1\le k\le \tilde m$, such that  
	\begin{equation*}
		\bs\pi = \prod_{k=1}^m \bs\pi^{(k)}, \quad  \widetilde{\bs\pi} = \prod_{k=1}^{\tilde m} \widetilde{\bs\pi}^{(k)}, 
	\end{equation*}
	and the following tensor products are highest-$\ell$-weight:
	\begin{equation*}
		L_q(\bs\pi^{(k)})\otimes L_q(\bs\pi^{(l)}), \quad L_q(\widetilde{\bs\pi}^{(k)})\otimes L_q(\widetilde{\bs\pi}^{(l)}), \quad\text{for } k<l,\text{ and}\quad L_q(\bs\pi^{(k)})\otimes L_q(\widetilde{\bs\pi}^{(l)}) \quad\text{for all }k,l.
	\end{equation*}
	Moreover, if all these tensor products are irreducible, then so is $L_q(\bs{\pi})\otimes L_q(\widetilde{\bs{\pi}})$.\qed
\end{cor}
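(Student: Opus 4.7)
The strategy is to reduce everything to two invocations of \Cref{cyc}, exploiting the fact that highest-$\ell$-weight modules admit $L_q$ of their highest $\ell$-weight as a unique simple quotient.

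\textbf{First step.} I would consider the ordered sequence $(S_1,\ldots,S_{m+\tilde m})$ of simple modules obtained by concatenating $L_q(\bs\pi^{(1)}),\ldots,L_q(\bs\pi^{(m)})$ with $L_q(\widetilde{\bs\pi}^{(1)}),\ldots,L_q(\widetilde{\bs\pi}^{(\tilde m)})$. The three families of hypotheses are precisely the conditions that $S_i\otimes S_j$ is highest-$\ell$-weight for all $i<j$, so \Cref{cyc} yields that
\begin{equation*}
 L_q(\bs\pi^{(1)})\otimes\cdots\otimes L_q(\bs\pi^{(m)})\otimes L_q(\widetilde{\bs\pi}^{(1)})\otimes\cdots\otimes L_q(\widetilde{\bs\pi}^{(\tilde m)})
\end{equation*}
is highest-$\ell$-weight with highest $\ell$-weight $\bs\pi\widetilde{\bs\pi}$.

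\textbf{Second step.} A second application of \Cref{cyc}, applied to each of the two groups individually (their pairwise tensor products are highest-$\ell$-weight by hypothesis), shows that $L_q(\bs\pi^{(1)})\otimes\cdots\otimes L_q(\bs\pi^{(m)})$ and $L_q(\widetilde{\bs\pi}^{(1)})\otimes\cdots\otimes L_q(\widetilde{\bs\pi}^{(\tilde m)})$ are highest-$\ell$-weight with highest $\ell$-weights $\bs\pi$ and $\widetilde{\bs\pi}$, respectively. Since every finite-dimensional highest-$\ell$-weight module has its highest-$\ell$-weight simple module as a quotient, we get epimorphisms onto $L_q(\bs\pi)$ and $L_q(\widetilde{\bs\pi})$. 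Tensoring these two epimorphisms furnishes an epimorphism from the long tensor product onto $L_q(\bs\pi)\otimes L_q(\widetilde{\bs\pi})$. Any quotient of a highest-$\ell$-weight module is highest-$\ell$-weight, so $L_q(\bs\pi)\otimes L_q(\widetilde{\bs\pi})$ is highest-$\ell$-weight, proving the first assertion.

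\textbf{Third step (the ``moreover'' part).} Assume now that all the listed tensor products are irreducible. Since $L_q(A)\otimes L_q(B)$ simple forces $L_q(B)\otimes L_q(A)$ simple as well (being isomorphic to it), the hypotheses remain valid after reversing the roles of $\bs\pi$ and $\widetilde{\bs\pi}$, with the obvious re-indexing of the two families. Running the argument of the first two steps with this reversed ordering shows that $L_q(\widetilde{\bs\pi})\otimes L_q(\bs\pi)$ is also highest-$\ell$-weight. At this point \Cref{c:vnvstar} immediately yields that $L_q(\bs\pi)\otimes L_q(\widetilde{\bs\pi})$ is simple.

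The argument is straightforward and I do not anticipate a genuine obstacle; the only point requiring minor care is the bookkeeping that the long tensor product really has $\bs\pi\widetilde{\bs\pi}$ as its top $\ell$-weight (so that both the long product and the sub-products project onto the expected simple quotients), which is immediate from the multiplicativity of Drinfeld polynomials in tensor products.
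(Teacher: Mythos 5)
Your argument is correct and is exactly the intended derivation: the paper states this as a corollary of \Cref{cyc} (quoting the proof from [mosi, Corollary 4.1.6]), and your two applications of \Cref{cyc} to the concatenated sequence and to each block, followed by tensoring the quotient maps, is that proof. The ``moreover'' part via reversing the order and invoking \Cref{c:vnvstar} is also sound (one could alternatively quote \Cref{irred} directly), since simplicity of the pairwise products makes them highest-$\ell$-weight in either order.
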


Although the following two corollaries of \Cref{l:hlwquot} will not be used here, they reveal interesting properties of pseudo $q$-fatcorization graphs afforded by trees. Since trees are the main topic of the present paper, we find it fit to include them here for record keeping. Let us use the following terminology. 	If $G$ and $G'$ are pseudo $q$-factorization graphs over $\bs\pi$ and $\bs\pi'$, respectively, we shall say  the ordered pair $(G,G')$ is highest-$\ell$-weight if so is $L_q(\bs\pi)\otimes L_q(\bs\pi')$. We shall say $G\otimes G'$ is highest-$\ell$-weight if either $(G,G')$ or $(G',G)$ is highest-$\ell$-weight.

\begin{cor}\label{c:roexists}
	Suppose $G$ is a pseudo $q$-fatcorization graph and $(G',G'')$ is a connected cut of $G$. If $G$ is a tree, then  $G'\otimes G''$ is highest-$\ell$-weight.
\end{cor}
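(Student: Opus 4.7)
The plan is to apply \Cref{l:hlwquot} after decomposing $\bs\pi'$ and $\bs\pi''$ into their individual $q$-factors. Since $G$ is a tree and $(G',G'')$ is a connected cut, there is exactly one edge of $G$ between the two parts, corresponding to a single arrow $a_e\in\mathcal A$; interchanging $G'$ and $G''$ if necessary, we may assume $a_e=(u,u')$ with $u\in V(G')$ and $u'\in V(G'')$, and we will then show that $L_q(\bs\pi')\otimes L_q(\bs\pi'')$, in that order, is highest-$\ell$-weight.

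The cornerstone is the following observation, available in any connected pseudo $q$-factorization graph that is a tree: \emph{if two vertices $v,v'$ are non-adjacent, then $L_q(\bs\omega_v)\otimes L_q(\bs\omega_{v'})$ is simple.} Indeed, were this product reducible, the construction of $\bs\pi_{G,v_0,a}$ applied to the unique path $\rho\in\mathscr P_{v,v'}$ together with \eqref{defredset} would give $|\epsilon_\rho|\in\mathscr R_{c(v),c(v')}^{\lambda(v),\lambda(v')}$; then whichever of $\rho$ or $\rho^{-1}$ lies in $\mathscr P^+_G$ would, by \eqref{e:notherarrow}, produce an arrow between $v$ and $v'$, contradicting non-adjacency.

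Next, pick linear extensions of the partial orders induced on $V(G')$ and $V(G'')$ by the internal arrows of the two subgraphs, and write $\bs\pi'=\bs\omega_{v_1}\cdots\bs\omega_{v_m}$ and $\bs\pi''=\bs\omega_{w_1}\cdots\bs\omega_{w_{m'}}$ accordingly, so that any arrow inside $G'$ is of the form $(v_k,v_l)$ with $k<l$ (and similarly inside $G''$). For any pair $v_k,v_l\in V(G')$ with $k<l$: if they are adjacent in $G$, the ordering forces the arrow between them to be $(v_k,v_l)$, so $L_q(\bs\omega_{v_k})\otimes L_q(\bs\omega_{v_l})$ is highest-$\ell$-weight by \eqref{e:arrowsdefp} and \eqref{e:krhwtp}; if not, the cornerstone yields simplicity. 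The analogous dichotomy handles the pairs inside $V(G'')$.

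Finally, for a cross pair $L_q(\bs\omega_{v_k})\otimes L_q(\bs\omega_{w_l})$ with $v_k\in V(G')$ and $w_l\in V(G'')$: the unique adjacent case $(v_k,w_l)=(u,u')$ is highest-$\ell$-weight thanks to the arrow $(u,u')$, while every other cross pair is non-adjacent in $G$ (since $(u,u')$ is the only edge between $G'$ and $G''$) and is therefore simple by the cornerstone. All the pairwise hypotheses of \Cref{l:hlwquot} are thus satisfied, so $L_q(\bs\pi')\otimes L_q(\bs\pi'')$ is highest-$\ell$-weight. The only subtle step is the cornerstone observation, which hinges on the uniqueness of paths in trees combined with \eqref{e:notherarrow}; with that in hand, the remainder is a straightforward organizational application of \Cref{l:hlwquot}.
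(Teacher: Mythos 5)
Your proof is correct and follows essentially the same route as the paper's: \Cref{l:elemtree} (uniqueness of the adjacent cross pair in a tree) combined with the observation that all remaining vertex pairs are either simple (non-adjacent) or highest-$\ell$-weight (internal arrows, ordered by a linear extension), followed by an application of \Cref{l:hlwquot}. The paper leaves the internal orderings and the ``non-adjacent implies simple'' step implicit, whereas you justify them explicitly via \eqref{e:notherarrow}; this is a difference of detail, not of method.
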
 

\begin{proof}
		Since $G$ is a tree and both $G'$ and $G''$ are connected, there exist unique $v'\in G'$ and $v''\in G''$ such that $d(v',v'')=1$ (\Cref{l:elemtree}). In particular, 
	\begin{equation*}
		w'\in\mathcal V_{G'}, \ w''\in\mathcal V_{G''}, \ (w',w'')\ne (v',v'') \quad\Rightarrow\quad L_q(w')\otimes L_q(w'') \text{ is simple.}
	\end{equation*}
	If $(v',v'')\in\mathcal A_G$, we have $L_q(v')\otimes L_q(v'')$ is highest-$\ell$-weight and, hence, \Cref{l:hlwquot} implies so is $(G',G'')$. Otherwise, $(v'',v')\in\mathcal A_G$ and it follows that $(G'',G')$  is highest-$\ell$-weight.	
\end{proof}

Having the last proof in mind, the following is immediate from \Cref{c:vnvstar}. 
\begin{cor}\label{c:sufoneorder}
	Suppose $G$ is a pseudo $q$-fatcorization and $(G',G'')$ is a connected cut of $G$. If $G$ is a tree and $v'\in G', v''\in G''$ are such that $(v',v'')\in\mathcal A_G$, then $G'\otimes G''$ is simple if and only if $(G'',G')$ is highest-$\ell$-weight.\qed
\end{cor}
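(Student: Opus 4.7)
The plan is to reduce the claim to \Cref{c:vnvstar} while extracting one direction for free from the proof of the preceding \Cref{c:roexists}. Concretely, \Cref{c:vnvstar} says $L_q(\bs\pi')\otimes L_q(\bs\pi'')$ is simple if and only if both $(G',G'')$ and $(G'',G')$ are highest-$\ell$-weight, so I only have to dispose of one of these two conditions using the tree hypothesis and the assumption on the direction of the connecting arrow.

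First, I would recall the setup already carried out in the proof of \Cref{c:roexists}: since $G$ is a tree and $G',G''$ are the two connected parts of a cut, there exist unique $v'\in G'$ and $v''\in G''$ with $d(v',v'')=1$, and any other pair $(w',w'')\in\mathcal V_{G'}\times \mathcal V_{G''}$ satisfies $d(w',w'')\geq 2$, so $L_q(w')\otimes L_q(w'')$ is automatically simple (as $G$ is a $q$-factorization graph with no arrow between these vertices). Under our hypothesis $(v',v'')\in\mathcal A_G$, the module $L_q(v')\otimes L_q(v'')$ is highest-$\ell$-weight. Thus every pair $(w',w'')$ with $w'$ a $q$-factor of $\bs\pi'$ and $w''$ a $q$-factor of $\bs\pi''$ gives a highest-$\ell$-weight tensor product in that order, and likewise inside $G'$ and inside $G''$ (these latter are simple by \Cref{p:primeconect} applied within each connected subgraph, or equivalently are highest-$\ell$-weight in every order). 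Consequently \Cref{l:hlwquot} yields that $(G',G'')$ is highest-$\ell$-weight.

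With $(G',G'')$ highest-$\ell$-weight in hand, \Cref{c:vnvstar} reduces the simplicity of $L_q(\bs\pi')\otimes L_q(\bs\pi'')$ to the highest-$\ell$-weight property of the reversed product, i.e.\ to $(G'',G')$ being highest-$\ell$-weight. This gives both implications of the stated equivalence in one stroke. I do not foresee any real obstacle: the entire argument is a bookkeeping exercise combining the observation from the proof of \Cref{c:roexists} with the symmetric criterion of \Cref{c:vnvstar}, and the tree hypothesis enters only to guarantee the unique connecting pair $(v',v'')$ whose arrow is the one specified.
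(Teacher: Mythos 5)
Your argument is correct and is essentially the paper's own proof: the paper simply observes that, "having the last proof in mind" (i.e.\ the proof of \Cref{c:roexists}, which shows $(G',G'')$ is highest-$\ell$-weight when $(v',v'')\in\mathcal A_G$), the equivalence is immediate from \Cref{c:vnvstar}. One tiny parenthetical slip: the internal pairs within $G'$ (resp.\ $G''$) need not be simple and \Cref{p:primeconect} is not the right reference there --- one just orders the $q$-factors of each part compatibly with $\prec$ so that the hypotheses of \Cref{l:hlwquot} hold, exactly as in the proof of \Cref{c:roexists}.
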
 

The last result from \cite{mosi} we shall need is: 

\begin{prop}[{\cite[Corollary 4.3.3]{mosi}}]\label{c:source<-sink} Let $\bs{\pi}',\bs{\pi}''\in\mathcal{P}^+$ with dissociate factorizations and $\bs{\pi}=\bs{\pi}'\bs{\pi}''$. Let also 
	$G=G(\bs{\pi}), G'=G(\bs{\pi}'), G''=G(\bs{\pi}'')$, and suppose $\bs{\omega}',\bs{\omega}''\in\mathcal{P}^+$ satisfy 
	\begin{equation*}
		\bs{\omega}' \text{ is a source in } G', \quad \bs{\omega}'' \text{ is a sink in } G'', \quad\text{and}\quad (\bs{\omega}'',\bs{\omega}') \in \mathcal{A}_G.
	\end{equation*}
	Then, $L_q(\bs{\pi}')\otimes L_q(\bs{\pi}'')$ is not highest-$\ell$-weight.\qed
\end{prop}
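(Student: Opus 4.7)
The plan is to argue by contradiction. Write $\bs\pi^\circ$ and $\bs\varpi^\circ$ for the products of the $q$-factors of $\bs\pi'$ and $\bs\pi''$ other than $\bs\omega'$ and $\bs\omega''$, respectively, so that $\bs\pi'=\bs\omega'\bs\pi^\circ$ and $\bs\pi''=\bs\varpi^\circ\bs\omega''$. Suppose $L_q(\bs\pi')\otimes L_q(\bs\pi'')$ is highest-$\ell$-weight; the goal is to deduce that $L_q(\bs\omega')\otimes L_q(\bs\omega'')$ is also highest-$\ell$-weight, which is impossible. Indeed, $(\bs\omega'',\bs\omega')\in\mathcal A_G$ says by the very definition of arrows in a $q$-factorization graph that $L_q(\bs\omega'')\otimes L_q(\bs\omega')$ is reducible and highest-$\ell$-weight; by \Cref{c:vnvstar}, the reverse $L_q(\bs\omega')\otimes L_q(\bs\omega'')$ cannot also be highest-$\ell$-weight, for then both orderings would be highest-$\ell$-weight and the module would be simple, contradicting the reducibility.

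The next step is to exploit the source/sink hypotheses to produce two auxiliary highest-$\ell$-weight tensor products. Since $\bs\omega'$ is a source of $G'$, for every other vertex $\bs\eta$ of $G'$ the module $L_q(\bs\omega')\otimes L_q(\bs\eta)$ is either simple (when $\bs\eta$ and $\bs\omega'$ are non-adjacent) or highest-$\ell$-weight (when $(\bs\omega',\bs\eta)\in\mathcal A_{G'}$). Enumerating the $q$-factors of $\bs\pi^\circ$ in a topological ordering of $G'\setminus\{\bs\omega'\}$, every pair among them is likewise either non-adjacent or a properly-oriented arrow, so \Cref{l:hlwquot} yields that $L_q(\bs\omega')\otimes L_q(\bs\pi^\circ)$ is highest-$\ell$-weight. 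The symmetric argument using the sink property of $\bs\omega''$ in $G''$ gives that $L_q(\bs\varpi^\circ)\otimes L_q(\bs\omega'')$ is highest-$\ell$-weight.

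The descent is then carried out by two applications of the cancellation principle of \cite[Corollary 4.3.2]{mosi}, used in the same spirit as in \Cref{ex:cesubpt=>p}. In the form relevant here, this principle allows one to start from a highest-$\ell$-weight tensor product $L_q(\bs\alpha)\otimes L_q(\bs\pi)$ and a dissociate factorization $\bs\pi=\bs\beta\bs\gamma$ with $L_q(\bs\beta)\otimes L_q(\bs\gamma)$ highest-$\ell$-weight, and conclude that $L_q(\bs\alpha)\otimes L_q(\bs\gamma)$ (or $L_q(\bs\alpha)\otimes L_q(\bs\beta)$) is highest-$\ell$-weight, with the analogous statement for factorizations on the left-hand side. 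Applying it first with $\bs\pi''=\bs\varpi^\circ\bs\omega''$ and the sink-derived fact, one cancels $\bs\varpi^\circ$ to obtain $L_q(\bs\pi')\otimes L_q(\bs\omega'')$ highest-$\ell$-weight. A second application, now on the left-hand side with $\bs\pi'=\bs\omega'\bs\pi^\circ$ and the source-derived fact, cancels $\bs\pi^\circ$ to yield $L_q(\bs\omega')\otimes L_q(\bs\omega'')$ highest-$\ell$-weight, the desired contradiction.

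The main obstacle is the precise symmetric form of the cancellation principle required for both descents. Dissociativity of the intermediate factorizations $\bs\pi'=\bs\omega'\bs\pi^\circ$ and $\bs\pi''=\bs\varpi^\circ\bs\omega''$ is automatic, inherited from the full $q$-factorizations of $\bs\pi'$ and $\bs\pi''$, so the essential input is the ``cancel-either-factor'' form of the cancellation principle on both sides. This is precisely the role played by \cite[Corollary 4.3.2]{mosi} in the argument.
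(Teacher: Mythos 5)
The statement is imported verbatim from \cite[Corollary 4.3.3]{mosi}, so this paper contains no proof to compare against; but your route — two applications of the cancellation principle \cite[Corollary 4.3.2]{mosi}, which immediately precedes it in \cite{mosi} — is almost certainly the intended one, and the logic is sound. Your auxiliary facts are correct: the source/sink hypotheses combined with \Cref{l:hlwquot} (using a topological ordering of $G'\setminus\{\bs\omega'\}$, respectively $G''\setminus\{\bs\omega''\}$, which exists since pre-factorization graphs contain no oriented cycles) do give that $L_q(\bs\omega')\otimes L_q(\bs\pi^\circ)$ and $L_q(\bs\varpi^\circ)\otimes L_q(\bs\omega'')$ are highest-$\ell$-weight, and the final contradiction via \Cref{c:vnvstar} is exactly right. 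One point worth making explicit, since you flag the symmetric form of the cancellation as the main obstacle: both cancellations in fact reduce to \Cref{c:nonzeromorph} together with \Cref{l:subsandq}(a), and the hypothesis that makes each go through is that the \emph{retained} factor ($\bs\omega''$ in the first descent, $\bs\omega'$ in the second) is a single $q$-factor and hence a simple Kirillov--Reshetikhin module — this simplicity is precisely what \Cref{c:nonzeromorph} demands of the middle tensorand. Concretely, if $L_q(\bs\omega')\otimes L_q(\bs\omega'')$ were not highest-$\ell$-weight, one gets a nonzero map $L_q(\bs\mu)\hookrightarrow L_q(\bs\omega'')\otimes L_q(\bs\omega')$ with $\bs\mu<\bs\omega''\bs\omega'$, and composing with the two projections $L_q(\bs\omega')\otimes L_q(\bs\pi^\circ)\twoheadrightarrow L_q(\bs\pi')$, $L_q(\bs\varpi^\circ)\otimes L_q(\bs\omega'')\twoheadrightarrow L_q(\bs\pi'')$ via \Cref{c:nonzeromorph} produces a nonzero map from a module of strictly smaller weights into $L_q(\bs\pi'')\otimes L_q(\bs\pi')$, whence $L_q(\bs\pi')\otimes L_q(\bs\pi'')$ is not highest-$\ell$-weight; so the double cancellation, when unwound, is exactly this morphism argument.
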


\subsection{Some Reducibility Criteria}\label{ss:redc}
Recall the main result of \cite{Hernandez1} implies the determination of the simplicity of tensor products can be reduced to that of two-fold tensor products.

\begin{thm}\label{irred}
	If $S_1,\cdots, S_n$ are simple $U_q(\tlie g)$-modules, the tensor product $$S_1\otimes\cdots\otimes S_n$$ is simple if, and only if, $S_i\otimes S_j$ is simple for all $1\leq i<j\leq n$.\hfil\qed
\end{thm}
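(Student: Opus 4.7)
The plan is to reduce the $n$-fold simplicity question to the two-factor case by systematically combining \Cref{cyc} (which converts $n$-fold highest-$\ell$-weight into a pairwise statement, for one fixed order of the factors) with \Cref{c:vnvstar} (which characterizes simplicity of a two-fold tensor product in terms of both orderings being highest-$\ell$-weight). Since \Cref{cyc} only sees one ordering at a time while simplicity requires both, some extra argument must be supplied when moving between an $n$-fold and a $2$-fold statement; this is where duality enters. I would treat the two implications separately.

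For the ``if'' direction, I would argue by induction on $n$. The base case $n=2$ is a tautology. For $n>2$, assume $S_i\otimes S_j$ is simple for all $1\le i<j\le n$. Since this is inherited by every subcollection of the $S_k$, the induction hypothesis gives that $U:=S_2\otimes\cdots\otimes S_n$ is simple. By \Cref{c:vnvstar}, each pair of indices gives a highest-$\ell$-weight product in \emph{both} orderings, so \Cref{cyc} applied to the sequences $S_1,S_2,\dots,S_n$ and $S_2,\dots,S_n,S_1$ yields that both $S_1\otimes U$ and $U\otimes S_1$ are highest-$\ell$-weight. Applying \Cref{c:vnvstar} now to the simple pair $(S_1,U)$ concludes that $V:=S_1\otimes\cdots\otimes S_n=S_1\otimes U$ is simple.

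For the ``only if'' direction, suppose $V$ is simple. Then $V$ is highest-$\ell$-weight, so \Cref{cyc} immediately furnishes that $S_i\otimes S_j$ is highest-$\ell$-weight for every $i<j$. To obtain the reverse ordering I would pass to the dual: by \eqref{e:dualtp} we have $V^*\cong S_n^*\otimes\cdots\otimes S_1^*$, and $V^*$ is simple (hence highest-$\ell$-weight) because $V$ is. Applying \Cref{cyc} to this dual product gives that $S_i^*\otimes S_j^*$ is highest-$\ell$-weight whenever $i>j$. I would then invoke the duality principle that, for simple $U$ and $W$, $U\otimes W$ is highest-$\ell$-weight if and only if $U^*\otimes W^*$ is, to translate this back into $S_i\otimes S_j$ being highest-$\ell$-weight for $i>j$, i.e.\ $S_j\otimes S_i$ highest-$\ell$-weight for $i<j$. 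Combined with what was already established, both orderings of every pair are highest-$\ell$-weight, and \Cref{c:vnvstar} delivers the pairwise simplicity.

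The main obstacle is the duality step in the ``only if'' direction. The forward implication ``$U\otimes W$ highest-$\ell$-weight $\Rightarrow$ $U^*\otimes W^*$ highest-$\ell$-weight'' can be obtained by dualizing an epimorphism $U\otimes W\twoheadrightarrow L_q(\bs\pi_U\bs\pi_W)$ via \eqref{e:dualses} and \eqref{e:dualtp} to get an embedding $L_q(\bs\pi_{U^*}\bs\pi_{W^*})\hookrightarrow W^*\otimes U^*$, and then invoking \Cref{l:subsandq}(a) to exchange the order. The reverse implication requires the identification $V^{**}\cong V$ for finite-dimensional $U_q(\tlie g)$-modules, which ultimately relies on the invertibility of the antipode and properties of the square of $S$ on this category. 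Everything else in the argument is bookkeeping with the already-established cyclicity and simplicity criteria.
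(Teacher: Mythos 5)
The paper does not prove \Cref{irred}: the statement ends with a \qed, which by the paper's stated convention marks an omitted proof, and the preceding sentence attributes the result to the main theorem of \cite{Hernandez1}, which historically precedes \Cref{cyc}. There is therefore no argument in the paper to compare against; what you have written is a derivation of \Cref{irred} from \Cref{cyc} and \Cref{c:vnvstar}, which is a legitimate alternative route.

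Your ``if'' direction is correct and clean: by induction $U=S_2\otimes\cdots\otimes S_n$ is simple, \Cref{c:vnvstar} gives both orders of each pair highest-$\ell$-weight, \Cref{cyc} applied to the orderings $S_1,\dots,S_n$ and $S_2,\dots,S_n,S_1$ gives $S_1\otimes U$ and $U\otimes S_1$ highest-$\ell$-weight, and \Cref{c:vnvstar} finishes.

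The ``only if'' direction as written has a gap. Dualizing the epimorphism $U\otimes W\twoheadrightarrow L_q(\bs\pi_U\bs\pi_W)$ and applying \Cref{l:subsandq}(a) produces an epimorphism $U^*\otimes W^*\twoheadrightarrow L_q(\bs\pi_U^*\bs\pi_W^*)$, but the existence of an epimorphism onto the top simple quotient is not by itself the same as being highest-$\ell$-weight (i.e.\ generated by the top $\ell$-weight vector): in principle the top vector could generate a proper direct summand. To close that step you would need an extra structural input about two-fold tensor products of simples (indecomposability, or R-matrix cyclicity for at least one order), none of which is recorded in the paper. In addition, $V^{**}\cong V$ is false for $U_q(\tlie g)$-modules; the double dual is a twist of $V$ by a shift of spectral parameter. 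Since the shift is a monoidal autoequivalence this can be patched, but as asserted the identity is incorrect.

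The duality detour is in fact unnecessary. If $V=S_1\otimes\cdots\otimes S_n$ is simple, put $U=S_2\otimes\cdots\otimes S_n$. Any proper nonzero submodule $A'\subsetneq A$ yields the proper nonzero submodule $A'\otimes B\subsetneq A\otimes B$, so simplicity of $V=S_1\otimes U$ forces $U$ to be simple, and the induction hypothesis takes care of all pairs with $2\le i<j\le n$. Since $S_1\otimes U$ is simple, \Cref{c:vnvstar} gives that both $S_1\otimes U=S_1\otimes\cdots\otimes S_n$ and $U\otimes S_1=S_2\otimes\cdots\otimes S_n\otimes S_1$ are highest-$\ell$-weight; \Cref{cyc} then gives that $S_1\otimes S_j$ and $S_j\otimes S_1$ are highest-$\ell$-weight for $2\le j\le n$, and one more application of \Cref{c:vnvstar} gives $S_1\otimes S_j$ simple. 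This closes the induction using only the machinery you already invoked and no duals.
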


Next, we prove a couple of criteria for deciding if a tensor product is reducible which will be used in the proofs of \Cref{p:critsimp} and \Cref{t:3lineprime}.

\begin{prop}\label{p:critredp}
	Suppose $\bs{\pi},\bs{\pi}^{\prime},\bs{\pi}^{\prime\prime}\in\mathcal{P}^+$ are such that the tensor products 
	\begin{equation*}
		V=L_q(\bs{\pi})\otimes L_q(\bs{\pi}^{\prime})\quad\textrm{and}\quad V'=L_q(\bs{\pi}^{\prime})\otimes L_q(\bs{\pi}^{\prime\prime})
	\end{equation*}
	are highest-$\ell$-weight. 
	Then, $L_q(\bs{\pi})\otimes L_q(\bs{\pi}^{\prime}\bs{\pi}^{\prime\prime})$  is reducible provided $V$ is reducible and similarly for $L_q(\bs{\pi}\bs{\pi}^{\prime})\otimes L_q(\bs{\pi}^{\prime\prime})$ if $V'$ is reducible.
\end{prop}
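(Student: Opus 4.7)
My plan is to argue by contradiction. Since the two assertions are symmetric (the second being obtained from the first by interchanging the roles of $\bs\pi$ and $\bs\pi''$), I focus on the first. So I assume $V$ is reducible and, for contradiction, that $W := L_q(\bs\pi)\otimes L_q(\bs\pi'\bs\pi'')$ is simple.

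The first step is to invoke \Cref{c:vnvstar}: since $W$ is simple, the reversed tensor product $L_q(\bs\pi'\bs\pi'')\otimes L_q(\bs\pi)$ must also be highest-$\ell$-weight. Next, I apply the ``dropping a factor'' result \cite[Proposition 5.2.2]{mosi}, whose usage is illustrated in parts~(2) and~(3) of \Cref{ex:cesubpt=>p}: applied to $L_q(\bs\pi'\bs\pi'')\otimes L_q(\bs\pi)$ with the factorization $\bs\pi'\bs\pi'' = \bs\pi'\cdot\bs\pi''$---the required compatibility being precisely the hypothesis that $V' = L_q(\bs\pi')\otimes L_q(\bs\pi'')$ is highest-$\ell$-weight---it lets us drop the factor $\bs\pi''$ and conclude that $L_q(\bs\pi')\otimes L_q(\bs\pi)$ is highest-$\ell$-weight.

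A second application of \Cref{c:vnvstar}, now to the pair $(V,\,L_q(\bs\pi')\otimes L_q(\bs\pi))$, whose members are both highest-$\ell$-weight, forces $V$ to be simple, contradicting the assumption that $V$ is reducible. Hence $W$ is reducible, which proves the first claim; the second follows by the analogous argument, dropping the factor $\bs\pi$ in the reversed tensor product $L_q(\bs\pi'')\otimes L_q(\bs\pi\bs\pi')$ and using that $V$ is highest-$\ell$-weight to invoke the same result.

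The main obstacle, and the only delicate point, is ensuring that \cite[Proposition 5.2.2]{mosi} applies in precisely the form described above. Should its exact hypotheses require a $q$-factorization rather than an arbitrary grouping, one decomposes $\bs\pi'$ and $\bs\pi''$ into their $q$-factors and iterates the ``drop a factor'' step, removing the $q$-factors of $\bs\pi''$ one at a time (each step is valid because each intermediate pair of subproducts retains the highest-$\ell$-weight property guaranteed by the original $V'$), eventually reaching the same conclusion that $L_q(\bs\pi')\otimes L_q(\bs\pi)$ is highest-$\ell$-weight.
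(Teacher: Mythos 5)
Your overall strategy---pass to the contrapositive, apply \Cref{c:vnvstar} twice, and bridge the two applications by ``dropping'' the factor $\bs\pi''$---is logically coherent, but the bridge is precisely where the proof has a genuine gap. Everything rests on \cite[Proposition 5.2.2]{mosi}, invoked in a form whose hypotheses you admit you cannot verify, and the evidence available in this paper indicates that your guessed hypothesis (``the required compatibility being precisely that $V'$ is highest-$\ell$-weight'') does not match the actual one. In the two places where that proposition is used here (parts (2) and (3) of \Cref{ex:cesubpt=>p}), the deleted $q$-factor occupies a specific position: it is a source of the left tensor factor's graph, respectively a sink of the right factor's graph; what holds in both instances is that $L_q(\text{dropped})\otimes L_q(\text{kept})$ is highest-$\ell$-weight \emph{in the order matching the side from which one deletes}. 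In your application you delete $\bs\pi''$ from the \emph{left} factor of $L_q(\bs\pi'\bs\pi'')\otimes L_q(\bs\pi)$, for which the analogous requirement is that $L_q(\bs\pi'')\otimes L_q(\bs\pi')$ be highest-$\ell$-weight---but the hypothesis of the proposition only provides the opposite order $V'=L_q(\bs\pi')\otimes L_q(\bs\pi'')$. This is not cosmetic: being highest-$\ell$-weight is a genuinely asymmetric property, and nothing in the hypotheses prevents $L_q(\bs\pi'')\otimes L_q(\bs\pi')$ from failing to be highest-$\ell$-weight. The same objection defeats the fall-back of iterating over the $q$-factors of $\bs\pi''$. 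Hence the key implication ``$L_q(\bs\pi'\bs\pi'')\otimes L_q(\bs\pi)$ highest-$\ell$-weight $\Rightarrow L_q(\bs\pi')\otimes L_q(\bs\pi)$ highest-$\ell$-weight'' is left unproved. (A minor additional inaccuracy: the two claims are not related by simply interchanging $\bs\pi$ and $\bs\pi''$---that swap turns $V$ into $L_q(\bs\pi'')\otimes L_q(\bs\pi')$, which is not $V'$; the correct symmetry reverses the tensor order, i.e., is a duality argument.)

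For comparison, the paper's proof is self-contained and avoids any dropping result. Since $V$ is reducible, it contains a simple submodule $L_q(\bs\lambda)$ properly, whence $\wt(\bs\lambda)<\wt(\bs\pi)+\wt(\bs\pi')$. Tensoring the inclusion $f:L_q(\bs\lambda)\hookrightarrow V$ with $\operatorname{id}_{L_q(\bs\pi'')}$ and composing with $\operatorname{id}_{L_q(\bs\pi)}\otimes g$, where $g:V'\twoheadrightarrow L_q(\bs\pi'\bs\pi'')$ uses that $V'$ is highest-$\ell$-weight, one obtains via \Cref{c:nonzeromorph} a nonzero map $L_q(\bs\lambda)\otimes L_q(\bs\pi'')\to L_q(\bs\pi)\otimes L_q(\bs\pi'\bs\pi'')$ whose source has all weights strictly below the highest weight of the target; were the target simple, this map would be surjective, which is impossible. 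If you want a complete argument, this direct homomorphism-plus-weight-count is the route to take.
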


\begin{proof}
	The second claim follows from the first by duality arguments, so we focus on the first. Since $V$ is reducible, there exists $\bs{\lambda}\in\mathcal{P}^+$ for which there exists a  non-surjective monomorphism $f:L_q(\bs{\lambda})\hookrightarrow V$. 
	In particular,  we must have
	\begin{equation}\label{eq:ineqlp}
		\operatorname{wt}(\bs{\lambda})<\operatorname{wt}(\bs{\pi})+\operatorname{wt}(\bs{\pi}^{\prime}). 
	\end{equation}
	We also consider the monomorphism:
	\begin{equation*}
		f\otimes\operatorname{id}_{L_q(\bs{\pi}^{\prime\prime})}:L_q(\bs{\lambda})\otimes L_q(\bs{\pi}^{\prime\prime})\hookrightarrow L_q(\bs{\pi})\otimes L_q(\bs{\pi}^{\prime})\otimes L_q(\bs{\pi}^{\prime\prime}).
	\end{equation*}
	On the other hand, since $V'$ is highest-$\ell$-weight, we have epimorphisms
	\begin{equation*}
		g: V'\twoheadrightarrow L_q(\bs{\pi}^{\prime}\bs{\pi}^{\prime\prime})
	\end{equation*}
	and
	\begin{equation*}
		\operatorname{id}_{L_q(\bs{\pi})}\otimes g:L_q(\bs{\pi})\otimes L_q(\bs{\pi}^{\prime})\otimes L_q(\bs{\pi}^{\prime\prime})\twoheadrightarrow L_q(\bs{\pi})\otimes L_q(\bs{\pi}^{\prime}\bs{\pi}^{\prime\prime}):=W.
	\end{equation*}
	
	By  \Cref{c:nonzeromorph}, the composition
	\begin{equation*}
		\varphi:L_q(\bs{\lambda})\otimes L_q(\bs{\pi}^{\prime\prime})\xrightarrow{f\otimes\operatorname{id}_{L_q(\bs{\pi}^{\prime\prime})}} L_q(\bs{\pi})\otimes L_q(\bs{\pi}^{\prime})\otimes L_q(\bs{\pi}^{\prime\prime})\xrightarrow{\operatorname{id}_{L_q(\bs{\pi})}\otimes g}W
	\end{equation*}
	is non-zero. Therefore, if $W$ were simple, $\varphi$ would be surjective. However, by \eqref{eq:ineqlp}, we have
	\begin{equation*}
		\operatorname{wt}(\bs{\lambda})+\operatorname{wt}({\bs{\pi}^{\prime\prime}})<\operatorname{wt}(\bs{\pi})+\operatorname{wt}(\bs{\pi}^{\prime})+\operatorname{wt}(\bs{\pi}^{\prime\prime}),
	\end{equation*}
	yielding a contradiction. Thus, $W$ is reducible as claimed.
\end{proof}

\begin{prop}\label{p:critred2p}
	Let $\bs{\pi},\bs{\pi}^{\prime},\bs{\pi}^{\prime\prime}\in\mathcal{P}^+$ such that the tensor products
	\begin{equation*}
		L_q(\bs{\pi})\otimes L_q(\bs{\pi}^{\prime})\quad\textrm{and}\quad L_q(\bs{\pi}^{\prime\prime})\otimes L_q(\bs{\pi}^{\prime})
	\end{equation*}	
	are highest-$\ell$-weight and
	$$L_q(\bs{\pi}^{\prime\prime})\otimes L_q(\bs{\pi})^*$$ is simple.  Then, if $L_q(\bs{\pi})\otimes L_q(\bs{\pi}^{\prime})$ is reducible, so is $$L_q(\bs{\pi})\otimes L_q(\bs{\pi}^{\prime}\bs{\pi}^{\prime\prime}).$$ 
	Similarly, if 
	\begin{equation*}
		L_q(\bs{\pi}')\otimes L_q(\bs{\pi})\quad\textrm{and}\quad L_q(\bs{\pi}')\otimes L_q(\bs{\pi}'')
	\end{equation*}	
	are highest-$\ell$-weight, with the former reducible, and $L_q(\bs{\pi}^{\prime\prime})^*\otimes L_q(\bs{\pi})$ is simple, then $L_q(\bs{\pi})\otimes L_q(\bs{\pi}^{\prime}\bs{\pi}^{\prime\prime})$ is reducible. 
\end{prop}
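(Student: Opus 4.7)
The plan is to prove the first statement; the second follows by the parallel argument with left/right dualities exchanged.  Write $A=L_q(\bs\pi)$, $B=L_q(\bs\pi')$, $C=L_q(\bs\pi'')$, and argue by contradiction: suppose $W:=A\otimes L_q(\bs\pi'\bs\pi'')$ is simple, so $W\cong L_q(\bs\pi\bs\pi'\bs\pi'')$ by weight considerations.  The strategy mimics that of \Cref{p:critredp}, with the simplicity hypothesis on $C\otimes A^*$ compensating for the fact that it is $C\otimes B$ rather than $B\otimes C$ which is highest-$\ell$-weight.

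First, the reducibility and highest-$\ell$-weight property of $A\otimes B$ produce a simple submodule $L_q(\bs\lambda)\hookrightarrow A\otimes B$ with $\wt(\bs\lambda)<\wt(\bs\pi\bs\pi')$.  The hypothesis on $C\otimes B$ gives an epimorphism $C\otimes B\twoheadrightarrow L_q(\bs\pi'\bs\pi'')$, and hence, by tensoring with $A$ on the left, the surjection $A\otimes C\otimes B\twoheadrightarrow W$.  To bridge these, note that simplicity of $C\otimes A^*$ together with \Cref{c:vnvstar} also makes $A^*\otimes C$ simple, and uniqueness of maps between isomorphic simple modules provides a $U_q(\tlie g)$-isomorphism $\iota\colon A^*\otimes C\xrightarrow{\sim} C\otimes A^*$.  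Composing $\iota$ with the rigidity maps $\mathrm{coev}_A\colon\mathbb F\to A\otimes A^*$ and $\mathrm{ev}_A\colon A^*\otimes A\to\mathbb F$ yields a nonzero half-braiding $\sigma\colon C\otimes A\to A\otimes C$ whose action on the top-weight vector $v_C\otimes v_A$ is a nonzero multiple of $v_A\otimes v_C$.  Assembling, one tensors the inclusion $L_q(\bs\lambda)\hookrightarrow A\otimes B$ with $C$ on the left, applies $\sigma\otimes\operatorname{id}_B$, and composes with the surjection onto $W$:
\[
\phi\colon C\otimes L_q(\bs\lambda)\hookrightarrow C\otimes A\otimes B\xrightarrow{\sigma\otimes\operatorname{id}_B}A\otimes C\otimes B\twoheadrightarrow W.
\]
A diagram chase based on \Cref{c:nonzeromorph} applied to the various pieces, together with the explicit top-weight behaviour of $\sigma$, shows $\phi\neq 0$.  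Since $W$ is simple, $\phi$ must be surjective; but the weight inequality $\wt(\bs\pi'')+\wt(\bs\lambda)<\wt(\bs\pi'')+\wt(\bs\pi\bs\pi')=\wt(\bs\pi\bs\pi'\bs\pi'')=\wt(W)$ rules out surjectivity, yielding the desired contradiction.

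The main obstacle is verifying that $\phi\neq 0$.  The top vector of $L_q(\bs\lambda)\subseteq A\otimes B$ is a linear combination whose weight is strictly below $\wt(\bs\pi\bs\pi')$, so one cannot simply track $v_C\otimes v_A\otimes v_B$ through $\sigma\otimes\operatorname{id}_B$; a careful analysis of how $\sigma$ interacts with the weight decomposition of $v_{\bs\lambda}$ is required, with non-vanishing ultimately coming from the fact that $\iota$ does not annihilate the relevant weight spaces of $A^*\otimes C$.  An alternative route avoids the explicit half-braiding: translate the inclusion $L_q(\bs\lambda)\hookrightarrow A\otimes B$ into a surjection $A^*\otimes L_q(\bs\lambda)\twoheadrightarrow B$ by Frobenius reciprocity, tensor on the left with $C$ and compose with $C\otimes B\twoheadrightarrow L_q(\bs\pi'\bs\pi'')$ to obtain a surjection $C\otimes A^*\otimes L_q(\bs\lambda)\twoheadrightarrow L_q(\bs\pi'\bs\pi'')$, and then exploit the simplicity of $C\otimes A^*$ through successive Frobenius adjunctions to manufacture a nonzero morphism $L_q(\bs\mu)\to W$ with $\wt(\bs\mu)<\wt(W)$.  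This alternative is technically cleaner but requires careful bookkeeping for the non-involutivity $V^{**}\not\cong V$ intrinsic to quantum affine algebras.
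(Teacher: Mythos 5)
Your primary route has a genuine, and indeed acknowledged, gap at its crux: the non-vanishing of $\phi$. The half-braiding $\sigma\colon C\otimes A\to A\otimes C$ built from $\iota$ and the rigidity maps is in general neither injective nor surjective (nothing in the hypotheses makes $C\otimes A$ simple --- only $C\otimes A^*$ is), so $\sigma\otimes\operatorname{id}_B$ may well annihilate the image of $C\otimes L_q(\bs\lambda)$, whose generating vector lies in weight spaces strictly below the top of $A\otimes B$. Moreover, \Cref{c:nonzeromorph} cannot be invoked here: it governs compositions of the form $(\operatorname{id}\otimes\varphi_2)\circ(\varphi_1\otimes\operatorname{id})$ through a \emph{simple} shared middle factor, and the interposition of $\sigma\otimes\operatorname{id}_B$ destroys exactly that shape. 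So the assertion that ``a diagram chase based on \Cref{c:nonzeromorph} \dots shows $\phi\ne0$'' is not a proof, and I see no way to complete it along these lines without essentially redoing the argument below.

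The ``alternative route'' you sketch at the end is the correct one, and it is the paper's proof; your worry about double duals is unfounded because only the right dual and the two adjunctions in \eqref{e:frobrec} are ever needed. Concretely: from $L_q(\bs\lambda)\hookrightarrow A\otimes B$ one gets, by \eqref{e:frobrec}, a surjection $g\colon A^*\otimes L_q(\bs\lambda)\twoheadrightarrow B$ (surjective since $B$ is simple); tensoring with $C$ and composing with $C\otimes B\twoheadrightarrow L_q(\bs\pi'\bs\pi'')$ gives an epimorphism $C\otimes A^*\otimes L_q(\bs\lambda)\twoheadrightarrow L_q(\bs\pi'\bs\pi'')$; simplicity of $C\otimes A^*$ yields an isomorphism $A^*\otimes C\xrightarrow{\ \sim\ }C\otimes A^*$, whence an epimorphism $A^*\otimes\bigl(C\otimes L_q(\bs\lambda)\bigr)\twoheadrightarrow L_q(\bs\pi'\bs\pi'')$; and a final application of \eqref{e:frobrec} (the adjunction $\operatorname{Hom}(V_2^*\otimes V_1,V_3)\cong\operatorname{Hom}(V_1,V_2\otimes V_3)$ with $V_2=A$, no double dual required) produces a nonzero map $C\otimes L_q(\bs\lambda)\to A\otimes L_q(\bs\pi'\bs\pi'')$, after which your weight inequality finishes the argument. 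As written, however, your proposal neither carries out this chain nor closes the non-vanishing gap in the braiding argument, so the proof is incomplete.
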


\begin{proof}
	Te second statement follows from the first by  duality arguments. To shorten notation for proving the first, set 
	\begin{equation*}
		V=L_q(\bs{\pi})\otimes L_q(\bs{\pi}^{\prime}), \quad U=L_q(\bs{\pi}^{\prime\prime})\otimes L_q(\bs{\pi}^{\prime}), \quad\textrm{and}\quad M=L_q(\bs{\pi}^{\prime\prime})\otimes L_q(\bs{\pi})^*.
	\end{equation*}	
	As in the proof of \Cref{p:critredp}, we consider the monomorphism $f$ and \eqref{eq:ineqlp} remains valid.
	
	By \eqref{e:frobrec}, we have a non-zero homomorphism 
	\begin{equation*}
		g:L_q(\bs{\pi})^*\otimes L_q(\bs{\lambda})\twoheadrightarrow L_q(\bs{\pi}^{\prime})
	\end{equation*}
	which is surjective, since $L_q(\bs{\pi}^{\prime})$ is simple. 	Therefore, we also have an epimorphism
	\begin{equation*}
		\operatorname{id}_{L_q(\bs{\pi}^{\prime\prime})}\otimes g:L_q(\bs{\pi}^{\prime\prime})\otimes L_q(\bs{\pi})^*\otimes L_q(\bs{\lambda})\twoheadrightarrow L_q(\bs{\pi}^{\prime\prime})\otimes L_q(\bs{\pi}^{\prime}).
	\end{equation*}
	In its turn, since $U$ is highest-$\ell$-weight, we have an epimorphism
	\begin{equation*}
		g^{\prime}:U\twoheadrightarrow L_q(\bs{\pi}^{\prime}\bs{\pi}^{\prime\prime})
	\end{equation*}
	and the following composition is obviously nonzero (hence, surjective):
	\begin{equation*}
		L_q(\bs{\pi}^{\prime\prime})\otimes L_q(\bs{\pi})^*\otimes L_q(\bs{\lambda})\xrightarrow{\operatorname{id}_{L_q(\bs{\pi}^{\prime\prime})}\otimes g} L_q(\bs{\pi}^{\prime\prime})\otimes L_q(\bs{\pi}^{\prime})\xrightarrow{g^{\prime}}L_q(\bs{\pi}^{\prime}\bs{\pi}^{\prime\prime}).
	\end{equation*}

	Since $M$ is simple, we have isomorphisms
	\begin{equation*}
		h:L_q(\bs{\pi})^*\otimes L_q(\bs{\pi}^{\prime\prime})\rightarrow M
	\end{equation*}
	and
	\begin{equation*}
		h\otimes \operatorname{id}_{L_q(\bs{\lambda})}:L_q(\bs{\pi})^*\otimes L_q(\bs{\pi}^{\prime\prime})\otimes L_q(\bs{\lambda})\rightarrow M\otimes L_q(\bs{\lambda}).
	\end{equation*}
	It follows that the following composition is an epimorphism:
	\begin{equation*}
		L_q(\bs{\pi})^*\otimes L_q(\bs{\pi}^{\prime\prime})\otimes L_q(\bs{\lambda})\xrightarrow{h\otimes \operatorname{id}_{L_q(\bs{\lambda})}} L_q(\bs{\pi}^{\prime\prime})\otimes L_q(\bs{\pi})^*\otimes L_q(\bs{\lambda})\xrightarrow{g^{\prime}\circ(\operatorname{id}_{L_q(\bs{\pi}^{\prime\prime})}\otimes g)}L_q(\bs{\pi}^{\prime}\bs{\pi}^{\prime\prime}).
	\end{equation*}
	By \eqref{e:frobrec}, we obtain a non-zero homomorphism
	\begin{equation*}
		\varphi:L_q(\bs{\pi}^{\prime\prime})\otimes L_q(\bs{\lambda})\rightarrow L_q(\bs{\pi})\otimes L_q(\bs{\pi}^{\prime}\bs{\pi}^{\prime\prime}).
	\end{equation*}
	The same argument used at the end of the proof of \Cref{p:critredp}  using  \eqref{eq:ineqlp} completes the proof here as well.
\end{proof}

We end this subsection with:

\begin{proof}[Proof of \Cref{p:critsimp}]
	Since $\bs\omega'\prec\bs\omega$, it follows from  \Cref{l:hlwquot} that $L_q(\bs\pi)\otimes L_q(\bs\pi')$ is highest-$\ell$-weight. Therefore, by  \Cref{c:vnvstar}, it suffices to prove that $L_q(\bs\pi')\otimes L_q(\bs\pi)$ is also highest-$\ell$-weight. Since the proofs of both situations are similar, we only write down the proof in the case condition (i) is satisfied. Thus, $G(\bs\pi\bs\pi')$ contains the following subgraph 
	\begin{tikzcd}
		\widetilde{\bs\omega} &  \arrow[swap,l] \bs\omega \arrow[r] & \bs{\omega'}
	\end{tikzcd}.
	
	To shorten notation, write $G=G(\bs\pi), G'=G(\bs\pi'), \mathcal V=\mathcal V_G$, and $\mathcal V'=\mathcal V_{G'}$. Consider the partition
	\begin{equation*}
		\mathcal V= \mathcal V_+ \mathbin{\dot{\cup}} \{\bs\omega,\widetilde{\bs\omega}\}\mathbin{\dot{\cup}}\mathcal V_-
	\end{equation*}
	where, for each $\bs\varpi\in\mathcal V_G\setminus\{\bs\omega,\widetilde{\bs\omega}\}$, we have
	\begin{equation*}
		\bs\varpi\in \mathcal V_+  \quad\Leftrightarrow\quad \text{either}\quad [\bs\varpi,\bs\omega]\cap\mathcal A_{\bs\omega}^1\ne\emptyset \quad\text{or}\quad [\bs\varpi,\widetilde{\bs\omega}] \cap\mathcal A_{\widetilde{\bs\omega}}^1\ne\emptyset \quad\text{and}\quad  \bs\omega\notin[\bs\varpi,\widetilde{\bs\omega}].
	\end{equation*}
	Thus,  
	\begin{equation*}
		\bs\varpi\in \mathcal V_-  \quad\Leftrightarrow\quad  \text{either}\quad [\bs\varpi,\widetilde{\bs\omega}]\cap\mathcal A_{\widetilde{\bs\omega}}^{-1}\ne\emptyset \quad\text{or}\quad [\bs\varpi,\bs\omega] \cap\mathcal A_{\bs\omega}^{-1}\ne\emptyset \quad\text{and}\quad  \widetilde{\bs\omega}\notin[\bs\varpi,\bs\omega].
	\end{equation*}
	Consider also the partition $\mathcal V'= \mathcal V'_+ \mathbin{\dot{\cup}} \{\bs\omega'\}\mathbin{\dot{\cup}}\mathcal V'_-$ with
	\begin{equation*}
		\bs\varpi\in \mathcal V'_+  \quad\Leftrightarrow\quad [\bs\varpi,\bs\omega']\cap \mathcal A_{\bs\omega'}^1\ne \emptyset\qquad\text{and}\qquad \bs\varpi\in \mathcal V'_-  \quad\Leftrightarrow\quad [\bs\varpi,\bs\omega']\cap \mathcal A_{\bs\omega'}^{-1}\ne \emptyset.
	\end{equation*}
	Set
	\begin{equation*}
		\bs\pi_\pm = \prod_{\bs\varpi\in\mathcal V_\pm} \bs\varpi, \qquad  \bs\pi'_\pm = \prod_{\bs\varpi\in\mathcal V'_\pm} \bs\varpi,
	\end{equation*}
	\begin{equation*}
		W=  L_q(\bs\pi_+)\otimes L_q(\bs\omega\widetilde{\bs\omega})\otimes L_q(\bs\pi_-), \qquad\text{and}\qquad W'=L_q(\bs\pi'_+)\otimes L_q(\bs\omega')\otimes L_q(\bs\pi'_-).
	\end{equation*}
	We will show that $W,W'$, and $W'\otimes W$ are highest-$\ell$-weight. In particular, there exists a surjective map $W'\otimes W\to L_q(\bs\pi')\otimes L_q(\bs\pi)$, which completes the proof of the proposition.
	
	By definition of $\mathcal V^+$, 
	\begin{equation*}
		L_q(\bs\varpi)\otimes L_q(\bs\omega) \quad\text{and}\quad L_q(\bs\varpi)\otimes L_q(\widetilde{\bs\omega}) \quad\text{are highest-$\ell$-weight for all}\quad \bs\varpi\in\mathcal V_+.
	\end{equation*}
	Indeed, at most one among $(\bs\varpi,\bs\omega)$ and $(\bs\varpi,\widetilde{\bs\omega})$ is an arrow in $G$ while neither $(\bs\omega,\bs\varpi)$ nor $(\widetilde{\bs\omega},\bs\varpi)$ is an arrow in $G$. Then,  \Cref{l:hlwquot} implies 
	\begin{equation*}
		L_q(\bs\pi_+)\otimes L_q(\bs\omega\widetilde{\bs\omega}) \qquad\text{is highest-$\ell$-weight.}
	\end{equation*}
	A similar argument shows that
	\begin{equation*}
		L_q(\bs\omega\widetilde{\bs\omega})\otimes L_q(\bs\pi_-), \quad L_q(\bs\pi'_+)\otimes L_q(\bs\omega'), \quad\text{and}\quad L_q(\bs\omega')\otimes L_q(\bs\pi'_-)
	\end{equation*}
	are highest-$\ell$-weight. Evidently, all $q$-factors of $\bs\pi_\pm$ are not adjacent to any $q$-factor of $\bs\pi'$ and, therefore,
	\begin{equation*}
		L_q(\bs\pi_\pm)\otimes L_q(\bs\varpi) \quad\text{is highest-$\ell$-weight for}\quad \bs\varpi\in\{\bs\pi'_+,\bs\pi'_-,\bs\omega'\}.
	\end{equation*}
	Similarly,
	\begin{equation*}
		L_q(\bs\varpi)\otimes L_q(\bs\pi'_\pm) \quad\text{is highest-$\ell$-weight for}\quad \bs\varpi\in\{\bs\pi_+,\bs\pi_-,\bs\omega\widetilde{\bs\omega}\}.
	\end{equation*}
	By assumption of condition (i), $L_q(\bs\omega\widetilde{\bs\omega})\otimes L_q(\bs\omega')$ is simple. In light of  \Cref{cyc}, it remains to check that
	\begin{equation*}
		L_q(\bs\pi_+)\otimes L_q(\bs\pi_-) \quad\text{and}\quad L_q(\bs\pi'_+)\otimes L_q(\bs\pi'_-) \quad\text{are highest-$\ell$-weight.}
	\end{equation*}	
	This follows by observing that $G(\bs\pi_+)$ and $G(\bs\pi_-)$ belong to different connected components of $G(\bs\pi_+\bs\pi_-)$  and similarly for $G(\bs\pi'_+\bs\pi'_-)$ (cf.  \cite[Lemma 2.2.2]{mosi}(e)). 
\end{proof}

\subsection{Proof of \Cref{c:nonadcjprim}}\label{ss:critprimetree}
Begin by recalling the following elementary property of trees.

\begin{lem}\label{l:elemtree}
	Assume $G$ is a tree.  If $\mathcal V_1\cup\mathcal V_2$ is a nontrivial partition of $\mathcal V_G$ such that $G_{\mathcal V_i}$ is connected for $i=1,2$, there exists unique $(v_1,v_2)\in\mathcal V_1\times\mathcal V_2$ such that $d(v_1,v_2)=1$. \qed
\end{lem}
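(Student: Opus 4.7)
The plan is to handle existence and uniqueness separately, using only the two defining properties of a tree: connectedness of the underlying undirected graph and the absence of cycles in it. Throughout, $d$ refers to distance in the underlying undirected graph, and a path means a walk with no repeated vertices.

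For existence, I would pick any $w_1\in\mathcal V_1$ and any $w_2\in\mathcal V_2$ and use the connectedness of $G$ to produce a path $w_1=u_0,u_1,\dots,u_k=w_2$ in the underlying undirected graph. Since the path starts in $\mathcal V_1$ and ends in $\mathcal V_2$, there is a smallest index $j$ such that $u_j\in\mathcal V_2$. Then $j\ge 1$, the vertex $u_{j-1}$ lies in $\mathcal V_1$, and $\{u_{j-1},u_j\}$ is an edge of $G$. Setting $(v_1,v_2):=(u_{j-1},u_j)$ gives the desired adjacent pair.

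For uniqueness, I would argue by contradiction: suppose $(v_1,v_2)$ and $(v_1',v_2')$ are two distinct elements of $\mathcal V_1\times\mathcal V_2$ with $d(v_1,v_2)=d(v_1',v_2')=1$. Since $G_{\mathcal V_1}$ is connected, there is a path $\rho_1$ inside $\mathcal V_1$ from $v_1$ to $v_1'$; similarly there is a path $\rho_2$ inside $\mathcal V_2$ from $v_2'$ to $v_2$. Concatenating $\rho_1$, the edge joining $v_1'$ to $v_2'$, $\rho_2$, and the edge joining $v_2$ to $v_1$ produces a closed walk in $G$. The two cross-edges are distinct (because $(v_1,v_2)\ne(v_1',v_2')$), and no edge is shared between $\rho_1$ and $\rho_2$ since their vertices lie in disjoint sets. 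Hence the closed walk contains a genuine cycle, contradicting the fact that $G$ is a tree.

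The argument presents no real obstacle; it is a standard combinatorial fact about trees. The only point to be slightly careful with is verifying that the closed walk constructed in the uniqueness step actually harbors a cycle, which amounts to the two observations just noted: distinctness of the two cross-edges and disjointness of the edge sets of $\rho_1$ and $\rho_2$. Once existence is in hand, note that one could alternatively deduce uniqueness from the fact that in a tree the path between any two vertices is unique, so any two vertices in different parts are joined by a path that must cross from $\mathcal V_1$ to $\mathcal V_2$ exactly once.
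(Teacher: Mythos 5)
Your argument is correct and complete. The paper omits the proof of this lemma entirely (the statement is closed with the end-of-proof symbol, which the authors use to flag results whose proofs are omitted as standard), so there is nothing to compare against; your existence step (first crossing of a $w_1$--$w_2$ path) and uniqueness step (two distinct cross-edges plus paths inside $G_{\mathcal V_1}$ and $G_{\mathcal V_2}$ would close up into a cycle) constitute exactly the standard proof one would supply, and the two observations you single out — distinctness of the cross-edges and disjointness of the edge sets of $\rho_1$ and $\rho_2$ — are indeed what guarantees the closed walk traverses some edge exactly once and hence contains a cycle.
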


\Cref{c:nonadcjprim} can be easily deduced from the next lemma together with \Cref{l:elemtree}.

\begin{lem}\label{p:critred2}
	Let $\bs{\pi},\bs{\pi}'\in\mathcal{P}^+$ have dissociate $q$-factorizations and set $G=G(\bs{\pi}), G'=G(\bs{\pi}')$.
	Assume $G$ and $G'$ are simply linked by $(\bs{\omega},\bs{\omega}')$ in $G\otimes G'$ with  $\bs{\omega}\in\mathcal{V}_G$ and $\bs{\omega}'\in\mathcal{V}_{G'}$
	and that 
	\begin{equation*}
		L_q(\bs\varpi)^*\otimes L_q(\bs\varpi') \quad\text{is simple for all}\quad \bs\varpi\in\mathcal{V}_{G},\bs\varpi'\in\mathcal{V}_{G'} \quad\text{such that}\quad \bs\varpi\ne\bs{\omega}\text{ and }\bs\varpi'\ne\bs{\omega}'.
	\end{equation*}
	Then, $L_q(\bs{\pi})\otimes L_q(\bs{\pi}')$ is reducible.
\end{lem}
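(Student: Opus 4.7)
The plan is to induct on $n = |\mathcal V_G| + |\mathcal V_{G'}|$. The base case $n = 2$ is immediate: then $\bs\pi = \bs\omega$ and $\bs\pi' = \bs\omega'$, and reducibility of $L_q(\bs\omega) \otimes L_q(\bs\omega')$ follows from the defining property of the arrow $(\bs\omega, \bs\omega')$.

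For the inductive step, assume WLOG that $|\mathcal V_{G'}| \ge 2$. I would pick a vertex $\bs\varpi' \in \mathcal V_{G'} \setminus \{\bs\omega'\}$ that is a source of $G'$; such a choice exists whenever $\bs\omega'$ is not the unique source, and the complementary situation can be reduced symmetrically by peeling a sink of $G$ distinct from $\bs\omega$ (with any remaining edge case, such as $\bs\omega$ being the unique source of $G$ and $\bs\omega'$ the unique sink of $G'$, handled by combining the two symmetric peelings). Set $\bs\pi^{(1)} = \bs\pi'/\bs\varpi'$ and $G^{(1)} = G(\bs\pi^{(1)})$. Removing $\bs\varpi'$ preserves all hypotheses: the unique link between $G$ and $G^{(1)}$ is still $(\bs\omega, \bs\omega')$, and the dual-simplicity condition on pairs in $\mathcal V_G \times \mathcal V_{G^{(1)}}$ is just a restriction of the original one. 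By the inductive hypothesis, $L_q(\bs\pi) \otimes L_q(\bs\pi^{(1)})$ is reducible.

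To promote this to reducibility of $L_q(\bs\pi) \otimes L_q(\bs\pi')$, I would invoke \Cref{p:critred2p} (first form) with the roles $\bs\pi_p = \bs\pi$, $\bs\pi'_p = \bs\pi^{(1)}$, $\bs\pi''_p = \bs\varpi'$, giving the conclusion $L_q(\bs\pi)\otimes L_q(\bs\pi^{(1)}\bs\varpi') = L_q(\bs\pi)\otimes L_q(\bs\pi')$ reducible. Three verifications are required: (i) $L_q(\bs\pi)\otimes L_q(\bs\pi^{(1)})$ is highest-$\ell$-weight, by \Cref{l:hlwquot} applied to topological orderings of the $q$-factors of $\bs\pi$ and $\bs\pi^{(1)}$, since the only cross-pair that is not simple is $(\bs\omega,\bs\omega')$, which is highest-$\ell$-weight by being the arrow itself; (ii) $L_q(\bs\varpi')\otimes L_q(\bs\pi^{(1)})$ is highest-$\ell$-weight, again by \Cref{l:hlwquot}, using that $\bs\varpi'$ is a source in $G'$ so all cross-pairs $L_q(\bs\varpi')\otimes L_q(\bs\varpi'')$ with $\bs\varpi''\in\mathcal V_{G^{(1)}}$ are highest-$\ell$-weight; and (iii) $L_q(\bs\varpi')\otimes L_q(\bs\pi)^*$ is simple, obtained by writing $L_q(\bs\pi)^*\cong L_q(\bs\pi^*)$ with $\bs\pi^* = \prod_{\bs\varpi\in\mathcal V_G}\bs\varpi^*$ and invoking the irreducibility-strengthening of \Cref{l:hlwquot}.

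The hard part will be condition (iii): the dual-simplicity hypothesis directly controls $L_q(\bs\varpi')\otimes L_q(\bs\varpi^*)$ only for $\bs\varpi\ne\bs\omega$, whereas the decomposition of $L_q(\bs\pi^*)$ necessarily includes the factor $L_q(\bs\omega^*)$. Overcoming this will require exploiting the precise structure of the single link $(\bs\omega,\bs\omega')$---probably using that $\bs\varpi'\ne\bs\omega'$ together with the arrow $(\bs\omega,\bs\omega')$ constrains the relative spectral parameters enough to guarantee simplicity of $L_q(\bs\varpi')\otimes L_q(\bs\omega^*)$ for at least the right choice of peeling vertex $\bs\varpi'$, so that one has the freedom to arrange the induction to avoid any uncontrolled pair.
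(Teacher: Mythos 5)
Your overall plan (induct on the number of vertices, peel an extremal vertex, and propagate reducibility through Propositions \ref{p:critredp} and \ref{p:critred2p}) is workable, but the point you yourself flag as ``the hard part'' is a genuine gap, and your sketched route to close it is the wrong one. Whenever you peel a source $\bs\varpi'$ of $G'$ and invoke \Cref{p:critred2p}, the module $L_q(\bs\varpi')\otimes L_q(\bs\pi)^*$ unavoidably involves the factor $\bs\omega$; no choice of peeling order avoids this, and non-adjacency of $\bs\omega$ and $\bs\varpi'$ in $G(\bs\pi\bs\pi')$ gives simplicity of $L_q(\bs\omega)\otimes L_q(\bs\varpi')$ but says nothing about the dualized product, so ``constraining the spectral parameters'' is not the mechanism. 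The actual resolution is that the hypothesis must be read as simplicity of $L_q(\bs\varpi)^*\otimes L_q(\bs\varpi')$ for every cross-pair \emph{other than the linked pair} $(\bs\omega,\bs\omega')$ (this is how \Cref{c:nonadcjprim} feeds into the lemma, and it is exactly what the paper's own proof uses at the analogous step). Since your $\bs\varpi'\ne\bs\omega'$, every pair $(\bs\varpi,\bs\varpi')$ with $\bs\varpi\in\mathcal V_G$ is then covered, and simplicity of $L_q(\bs\varpi')\otimes L_q(\bs\pi)^*$ follows by writing $L_q(\bs\pi)^*\cong L_q(\bs\pi^*)$ with $q$-factors the duals of those of $\bs\pi$ (cf.\ \eqref{e:dualgraphs}) and applying \Cref{l:hlwquot} in both orders together with \Cref{c:vnvstar}. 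Under the literal ``both inequalities'' reading your (iii) cannot be derived, so this interpretive point is essential, not cosmetic.

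A second, smaller gap is the choice of peeling vertex: you only contemplate sources of $G'$ and sinks of $G$, and ``combining the two symmetric peelings'' is not a meaningful fallback (e.g.\ if $G$ is a singleton and $\bs\omega'$ is the unique source of $G'$, neither move is available). The clean statement is that whichever side has an extra vertex contains a source or a sink of that side distinct from its linked vertex; peeling a sink of $G'$ (or a source of $G$) needs only \Cref{p:critredp} and no dual-simplicity condition, while a source of $G'$ (or a sink of $G$) needs \Cref{p:critred2p}. Also, your ``WLOG'' exchanging $G$ and $G'$ is not free, since the propositions have asymmetric slots and the dual sits on the $G$-side; the paper handles this by an explicit arrow-duality reduction. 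For comparison, the paper strengthens the inductive statement to the existence of a nonzero homomorphism into $L_q(\bs\pi)\otimes L_q(\bs\pi')$ from a module with strictly smaller weights and builds these maps directly (essentially inlining the proofs of \Cref{p:critredp} and \Cref{p:critred2p}); your version is an acceptable repackaging once the two gaps above are closed.
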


\begin{proof}
	We will prove by induction on the number $N\geq 2$ of vertices of $G(\bs\pi\bs\pi')$ that there exists a nonzero homomorphism 
	\begin{equation*}
		f:V\rightarrow L_q(\bs{\pi})\otimes L_q(\bs{\pi}')
	\end{equation*}
	for some finite-dimensional $U_q(\tilde{\mathfrak{g}})$-module $V$ such that
	\begin{equation}\label{e:critred2}
		\lambda<\wt(\bs{\pi})+\wt(\bs{\pi}') \qquad\text{for all}\qquad \lambda\in\wt(V).
	\end{equation}
	This obviously implies $L_q(\bs{\pi})\otimes L_q(\bs{\pi}')$ is reducible. 
	
	The assumption that $G$ and $G'$ are simply linked together with \Cref{l:hlwquot} implies $L_q(\bs{\pi})\otimes L_q(\bs{\pi}')$ is highest-$\ell$-weight. If $N=2$, then $\bs{\pi}=\bs{\omega}, \bs\pi'=\bs\omega'$ and, since $(\bs\omega,\bs\omega')\in\mathcal{A}_{G(\bs\pi\bs\pi')}$, $L_q(\bs{\pi})\otimes L_q(\bs{\pi}')$ is a reducible highest-$\ell$-weight module, showing that inductions starts.

	For $N>2$, either $G$ or $G'$ has an extra vertex. In light of \eqref{e:dualgraphs}, up to replacing $G(\bs\pi\bs\pi')$ by its arrow-dual, we can assume without loss of generality that $G$ has an extra vertex. Moreover, since each connected component of $G$  has a sink and a source, we can choose either a sink or a source among the extra vertices. Henceforth, let $\bs\varpi$ be such a choice. 
	
	Quite clearly, after replacing $\bs\pi$ with $\bs\pi\bs\varpi^{-1}$, the hypotheses of the lemma remain valid. Thus, the inductive hypothesis implies there exists and a non-zero homomorphism
	\begin{equation*}
		h:W\rightarrow L_q(\bs{\pi}\bs\varpi^{-1})\otimes L_q(\bs{\pi}'),
	\end{equation*}
	where $W$ is a finite-dimensional module such that
	\begin{equation}\label{e:critred2i}
		\lambda<\operatorname{wt}(\bs{\pi})-\operatorname{wt}(\bs\varpi)+\operatorname{wt}(\bs{\pi}')
	\end{equation}
	for all $\lambda\in\operatorname{wt}(W)$. 
	
	Assume first that $\bs\varpi$ is a source in $G$. In that case,  $L_q(\bs\varpi)\otimes L_q(\bs{\pi}\bs\varpi^{-1})$ is highest-$\ell$-weight by  \Cref{l:hlwquot} and we have an epimorphism
	\begin{equation*}
		p:L_q(\bs\varpi)\otimes L_q(\bs{\pi}\bs\varpi^{-1})\twoheadrightarrow L_q(\bs{\pi}).
	\end{equation*}
	Let $V=L_q(\bs\varpi)\otimes W$ and let $f$ be the composition
	\begin{equation*}
		L_q(\bs\varpi)\otimes W\xrightarrow{\operatorname{id}_{L_q(\bs\varpi)}\otimes h}L_q(\bs\varpi)\otimes L_q(\bs{\pi}\bs\varpi^{-1})\otimes L_q(\bs{\pi}')\xrightarrow{p\otimes\operatorname{id}_{L_q(\bs{\pi}')}} L_q(\bs{\pi})\otimes L_q(\bs{\pi}').
	\end{equation*}
	\Cref{c:nonzeromorph} implies $f$ is non-zero. Since $\ch(V) = \ch(L_q(\bs\varpi))\ch(W)$, we have
	\begin{equation*}
		\mu\in\wt(V) \quad\Rightarrow\quad \mu=\nu+\lambda \quad\text{with}\quad \nu\le\wt(\bs\varpi), \lambda\in\wt(W).
	\end{equation*}
	Thus, \eqref{e:critred2} follows from \eqref{e:critred2i}.

	Finally, if $\bs\varpi$ is a sink in $G$, we set $V=W\otimes L_q(\bs\varpi)$. Evidently, \eqref{e:critred2} follows from \eqref{e:critred2i} as before and, therefore, we need to show there exists a nonzero map $f:V\to L_q(\bs{\pi})\otimes L_q(\bs{\pi}')$.

	\Cref{l:hlwquot} implies $L_q(\bs{\pi}^{(1)}\bs\varpi^{-1})\otimes L_q(\bs\varpi)$ is highest-$\ell$-weight and, hence, we have an epimorphism
	\begin{equation*}
		p:L_q(\bs{\pi}\bs\varpi^{-1})\otimes L_q(\bs\varpi)\twoheadrightarrow L_q(\bs{\pi}).
	\end{equation*} 
	Since $h\ne 0$, \eqref{e:frobrec} implies we have a non-zero homomorphism
	\begin{equation*}
		g:W\otimes ~^*L_q(\bs{\pi}')\rightarrow L_q(\bs{\pi}\bs\varpi^{-1}).
	\end{equation*}
	In particular, since $L_q(\bs{\pi}\bs\varpi^{-1})$ is simple, $g$ is surjective and, hence, so are
	\begin{equation*}
		g\otimes\operatorname{id}_{L_q(\bs\varpi)}:W\otimes ~^*L_q(\bs{\pi}')\otimes L_q(\bs\varpi)\rightarrow L_q(\bs{\pi}\bs\varpi^{-1})\otimes L_q(\bs\varpi)
	\end{equation*}
	and the composition
	\begin{equation*}
		W\otimes ~^*L_q(\bs{\pi}')\otimes L_q(\bs\varpi)\xrightarrow{g\otimes\operatorname{id}_{L_q(\bs\varpi)}} L_q(\bs{\pi}\bs\varpi^{-1})\otimes L_q(\bs\varpi)\xrightarrow{p}L_q(\bs{\pi}).
	\end{equation*}
	Since $\bs\varpi\neq\bs\omega$, it follows from our assumptions that $~^*L_q(\bs{\pi}')\otimes L_q(\bs\varpi)$ is simple and we have an isomorphism
	\begin{equation*}
		\varphi: V\otimes  ~^*L_q(\bs{\pi}') = W\otimes L_q(\bs\varpi)\otimes ~^*L_q(\bs{\pi}')\rightarrow W\otimes ~^*L_q(\bs{\pi}')\otimes L_q(\bs\varpi).
	\end{equation*}
	Thus, the composition 
	\begin{equation*}
		V\otimes ~^*L_q(\bs{\pi}')\xrightarrow{\varphi}W\otimes ~^*L_q(\bs{\pi}')\otimes L_q(\bs\varpi)\xrightarrow{g\otimes\operatorname{id}_{L_q(\bs\varpi)}} L_q(\bs{\pi}\bs\varpi^{-1})\otimes L_q(\bs\varpi)\xrightarrow{p}L_q(\bs{\pi}) 
	\end{equation*}
	is surjective. Finally, applying \eqref{e:frobrec} again we obtain a non-zero homomorphism
	\begin{equation*}
		V\rightarrow L_q(\bs{\pi})\otimes L_q(\bs{\pi}')
	\end{equation*}
	as desired.
\end{proof}

\section{Three-Vertex Prime Graphs}\label{s:3primes}

This section is dedicated to the proofs of Theorems \ref{t:3lineprime} and \ref{t:realtree}. In fact, most of the section is concerned with the former, while the latter is treated in the final (short) subsection.

\subsection{Tensor Product of Fundamental Modules} \label{soctpkr}

Assume $\lie g$ is of type $A$ and identify $I$ with the set $\{1,\dots,n\}$ in such a way that $d(i,j)=|j-i|$ for all $i,j\in I$ (there are two options). Fix $i,j\in I, a\in\mathbb F^\times, r,s\in\mathbb Z_{> 0}, m\in\mathscr R_{i,j}^{r,s}$, and consider
\begin{equation*}
	V= L_q(\bs\omega_{j,aq^m,s})\otimes L_q(\bs\omega_{i,a,r}).
\end{equation*}
One can consider the problem of characterizing  the simple factors of $V$. For the purpose of proving \Cref{t:3lineprime}, we need the case $r=s=1$ only, in which case
\begin{equation}
	m = 2+d(i,j)-2p \quad\text{for some}\quad -d([i,j],\partial I)\le p\le 0.
\end{equation}
Letting $i_-=\min\{i,j\}, i_+=\max\{i,j\}$, and $J$ the minimal connected subdiagram such that $m\in \mathscr R_{i,j,J}^{1,1}$, we have
\begin{equation*}
	J = [i_-+p,i_+-p].
\end{equation*}
Evidently, if $L_q(\bs\mu)$ is such a simple factor, we must have 
\begin{equation*}
	\bs\mu\in D := \wtl(V)\cap\mathcal P^+.
\end{equation*}

The qcharacter of the fundamental modules are well-known and can be described in terms of column tableaux (see \cite[Theorems 3.8 and 3.10]{her:min}, \cite[Corollary 7.6 and Remark 7.4 (i)]{muyou:path},  and references therein). We partially review this approach. 

Given $a\in\mathbb F^\times$, let $\ffbox{i}_{\:s}$ denote the $\ell$-weight $\bs\omega_{i,aq^{s+i-1}}\bs\omega_{i-1,aq^{s+i}}^{-1}, 1\le i\le n+1$. Here, we understand $\bs\omega_{0,b}=\bs 1=\bs\omega_{n+1,b}$. The index $s$ of the box will be referred to as its content while the index $s$ as its support. A semi-standard column tableau of height $k$ supported at $s$ is a vertical juxtaposition of boxes
\begin{equation*}
	\renewcommand{\arraycolsep}{0pt}
	\begin{array}{|c|}
		\hline \hbox to 0.55cm{\hfill$\scriptstyle i_1$\hfill} \\
		\hline \hbox to 0.55cm{\hfill$\vdots$\hfill} \\
		\hline \hbox to 0.55cm{\hfill$\scriptstyle{i_k}$\hfill} \\
		\hline
	\end{array}_{\:s}  \quad\text{with}\quad 1\le i_1\le \cdots\le i_k\le n+1
\end{equation*} 
and the support of the box having content $i_j$ is $s+2(k-j)$. The $\ell$-weight associated to such tableau $T$ is
\begin{equation*}
	\bs\omega^T = \prod_{j=1}^k  \ffbox{i_j}_{\:s+2(k-j)}.
\end{equation*}
In particular, if $i_j=j$ for all $1\le j\le k$, $\bs\omega^T = \bs\omega_{k,aq^{s+k-1}}$. Let ${\rm Tab}_{i,s}$ denote the set of all semi-standard column tableaux of height $i$ supported at $1-i$. Then,
\begin{equation}
	\qch(L_q(\bs\omega_{i,a})) = \sum_{T\in{\rm Tab}_{i,s}} \bs\omega^T.
\end{equation}                                                                                       
In particular, 
\begin{equation*}
	\dim(L_q(\bs\omega_{i,a})_{\bs\varpi}) =  1 \quad\text{for all}\quad \bs\varpi\in\wtl(L_q(\bs\omega_{i,a})).
\end{equation*}

Proceeding as in \cite[Lemma 3.3.1]{mope:tpa}, one easily checks that every element of $D$ is of the form 
\begin{equation}\label{e:toplow}
	\bs\nu\bs\omega_{j,aq^m} \quad\text{with}\quad \bs\nu\in\wtl(L_q(\bs\omega_{i,a}))
\end{equation}
and, therefore, we must have $\bs\nu=\bs\omega^T$ for some $T\in{\rm Tab}_{i,s}$ having at most one gap. Note $T$ has exactly one gap if and only if it is of the form
\begin{equation*}
	T_{k,l}: = \renewcommand{\arraycolsep}{0pt}
	\begin{array}{|c|}
		\hline \hbox to 0.55cm{\hfill$\scriptstyle 1$\hfill} \\
		\hline \hbox to 0.55cm{\hfill$\vdots$\hfill} \\
		\hline \hbox to 0.55cm{\hfill$\scriptstyle k$\hfill} \\
		\hline \hbox to 0.55cm{\hfill$\scriptstyle l+1$\hfill} \\
		\hline \hbox to 0.55cm{\hfill$\vdots$\hfill} \\
		\hline \hbox to 1.1cm{\hfill$\scriptstyle{l+i-k}$\hfill} \\
		\hline
	\end{array}_{\:1-i} \quad\text{with}\quad 0\le k <\min\{i,l\}, \ l\le n-i+k+1.
\end{equation*} 
Note also
\begin{equation}\label{e:qcfund}
	\bs\omega^{T_{k,l}} = \bs\omega_{l,aq^{i+l-2k}}^{-1}\ \bs\omega_{k,aq^{i-k}}\ \bs\omega_{i+l-k,aq^{l-k}}.
\end{equation}
Letting $J_{k,l} = [k+1,l+i-k-1]$, one can easily check that (cf. \cite[Eq. (2.2.11)]{mope:tpa})
\begin{equation}\label{e:tpfroots}
	\omega_i - \wt(\bs\omega^{T_{k,l}})\in Q_{J_{k,l}}^+
\end{equation}

Evidently, for $\bs\nu=\bs\omega_{i,a}, L_q(\bs\nu\bs\omega_{j,aq^m})$ is a simple factor. On the other hand, if $\bs\nu$ is as in \eqref{e:qcfund}, then $\bs\nu\bs\omega_{j,aaq^m}\in\mathcal P^+$ if and only if  $l=j,i+l-2k = m$, and, hence,
\begin{equation}\label{e:tpfroots=min}
	k = p-1 + i_-, \qquad J_{k,l} = J,
\end{equation}
and
\begin{equation*}
	D = \{\bs\omega_{i,a}\ \bs\omega_{j,aq^m}\ ,\ \bs\omega_{i_-+p-1,aq^{1-p+i-i_-}}\ \bs\omega_{i_++1-p,aq^{1-p+j-i_-}}  \}.
\end{equation*} 
Moreover, 
\begin{equation}\label{soctpd}
	L_q(\bs\omega_{i_-+p-1,aq^{1-p+i-i_-}})\otimes L_q(\bs\omega_{i_++1-p,aq^{1-p+j-i_-}}) \quad\text{is simple}. 
\end{equation}
Indeed, to check this, it suffices to show that
\begin{equation*}
	d(i,j)=|(1-p+j-i_-) - (1-p+i-i_-)|\notin\mathscr R_{i_-+p-1,i_++1-p}^{1,1}.
\end{equation*}
The elements of $\mathscr R_{i_-+p-1,i_++1-p}^{1,1}$ are of the form $2+ d(i_-+p-1,i_++1-p) - 2p'$ with $p'\le 0$. Since
\begin{equation*}
	d(i_-+p-1,i_++1-p) = 2(1-p) + d(i,j)>d(i,j),
\end{equation*}
\eqref{soctpd} follows. In summary, we have proved there exists a short exact sequence (see also \cite{mope:tpa} and references therein).
\begin{equation}\label{socforl2}
	0\to L_q(\bs\omega_{i_-+p-1,aq^{1-p+i-i_-}})\otimes L_q(\bs\omega_{i_++1-p,aq^{1-p+j-i_-}})\to V\to L_q(\bs\omega_{i,a} \bs\omega_{j,aq^m})\to 0. 
\end{equation}

\subsection{Singleton Cuts}\label{ss:tpfund}
Any nontrivial cut of a $q$-factorization graph with three vertices has one of its subgraphs being a singleton. More generally, if \cite[Conjecture 3.5.1]{mosi} is true, the relevance of  such cuts for the theory is evident. Thus, let us start from this broader setup.

Given $\bs\pi\in\mathcal P^+, i\in I,a\in\mathbb F^\times$, and $r\in\mathbb Z_{>0}$, let
\begin{equation}\label{e:tpfund}
	V= L_q(\bs\omega_{i,a,r})\otimes L_q(\bs\pi).
\end{equation}
The goal is to describe conditions on the given parameters for deciding whether $V$ is simple or not.
By \Cref{l:hlwquot}, if $L_q(\bs\omega)\otimes L_q(\bs\omega_{i,a,r})$ is simple for every $q$-factor $\bs\omega$ of $\bs\pi$, then $V$ is simple, but this is certainly not a necessary condition. In any case, we can assume there exists $j\in I,m,s\in\mathbb Z,s>0$, such that $\bs\omega_{j,aq^m,s}$ is a $q$-factor of $\bs\pi$ and $|m|\in\mathscr R_{i,j}^{r,s}$. We can also assume $\bs\pi$ and $\bs\omega_{i,a,r}$ have dissociated $q$-factorizations, otherwise \Cref{p:qftp} implies $V$ is reducible. Evidently,
\begin{equation*}
	L_q(\bs\pi)\cong L_q(\bs\omega_{j,aq^m,s})\otimes L_q(\bs\pi\bs\omega_{j,aq^m,s}^{-1}) \qquad\Rightarrow\qquad V\text{ is reducible.}
\end{equation*}
Thus, we shall assume $L_q(\bs\omega_{j,aq^m,s})\otimes L_q(\bs\pi\bs\omega_{j,aq^m,s}^{-1})$ is reducible.

\begin{lem}\label{l:disconect}
	Let $J$ be a connected subdiagram such that $[i,j]\subseteq J$ and $|m|\in\mathscr R^{r,s}_{i,j,J}$. 
	Let  $G(\bs\varpi)$ be the connected component of $G(\bs\pi_J)$ containing $(\bs\omega_{j,aq^m,s})_J$. 	
	If $L_q((\bs\omega_{i,a,r})_J)\otimes L_q(\bs\varpi)$ is reducible, so is $V$. In particular, this is the case if $G(\bs\varpi)$ is a singleton.
\end{lem}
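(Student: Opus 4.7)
The strategy is a contrapositive argument that passes to the diagram subalgebra $U_q(\tlie g)_J$. Assume that $V$ is simple; the goal is to deduce that $L_q((\bs\omega_{i,a,r})_J)\otimes L_q(\bs\varpi)$ is simple as well, contradicting the hypothesis. First, apply \Cref{c:sJs} to the two tensor factors of $V$: the simplicity of $L_q(\bs\omega_{i,a,r})\otimes L_q(\bs\pi)$ implies the simplicity of the $U_q(\tlie g)_J$-module $L_q(\bs\omega_{i,a,r})_J\otimes L_q(\bs\pi)_J$. Invoking \Cref{sl32sl2}, these restricted factors are themselves simple highest-$\ell$-weight modules and, via the algebra isomorphism $U_q(\tlie g)_J\cong U_{q_J}(\tlie g_J)$, are identified with $L_q((\bs\omega_{i,a,r})_J)$ and $L_q(\bs\pi_J)$, respectively.

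Next I would use the hypothesis on $\bs\varpi$: since $G(\bs\varpi)$ is a connected component of $G(\bs\pi_J)$, \Cref{p:primeconect} yields $L_q(\bs\pi_J)\cong L_q(\bs\varpi)\otimes L_q(\bs\pi_J\bs\varpi^{-1})$. Substituting this into the previous step expresses $L_q((\bs\omega_{i,a,r})_J)\otimes L_q(\bs\pi_J)$ as a threefold tensor product. By \Cref{irred}, the simplicity of that threefold product forces every pairwise tensor product to be simple, and in particular $L_q((\bs\omega_{i,a,r})_J)\otimes L_q(\bs\varpi)$ would be simple, contradicting the hypothesis. Hence $V$ must be reducible, which proves the first assertion.

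The ``in particular'' assertion then reduces to a direct application of the first part: if $G(\bs\varpi)$ is a singleton then $\bs\varpi=(\bs\omega_{j,aq^m,s})_J$, and by the very definition of $\mathscr R^{r,s}_{i,j,J}$ together with the assumption $|m|\in\mathscr R^{r,s}_{i,j,J}$, the product $L_q((\bs\omega_{i,a,r})_J)\otimes L_q((\bs\omega_{j,aq^m,s})_J)$ is reducible. I do not foresee a substantial obstacle in this proof; all the heavy lifting is done by the previously established results on restriction to diagram subalgebras. The only bookkeeping care needed is to keep track of when a module is being regarded over $U_q(\tlie g)$ versus over $U_q(\tlie g)_J\cong U_{q_J}(\tlie g_J)$, so as to apply \Cref{c:sJs}, \Cref{sl32sl2}, \Cref{p:primeconect}, and \Cref{irred} in the appropriate category.
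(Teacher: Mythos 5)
Your proposal is correct and is essentially the paper's own argument, just run in the contrapositive: the paper decomposes $L_q(\bs\pi_J)\cong L_q(\bs\varpi)\otimes L_q(\bs\varpi')$ (via the connected-component decomposition), observes the resulting threefold product is reducible because its first two factors form a reducible product, and then concludes $V$ is reducible by \Cref{c:sJs}, with the same identification $\bs\varpi=(\bs\omega_{j,aq^m,s})_J$ for the singleton case. Your substitutions of \Cref{p:primeconect} for \Cref{l:hlwquot} and of \Cref{irred} for the elementary direction of that equivalence are immaterial.
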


\begin{proof}
	Write $\bs\pi_J = \bs\varpi\bs\varpi'$. The assumption on $G(\bs\varpi)$ together with  \Cref{l:hlwquot} implies $L_q(\bs\pi_J)\cong L_q(\bs\varpi)\otimes L_q(\bs\varpi')$ and, hence,
	\begin{equation*}
		L_q((\bs\omega_{i,a,r})_J)\otimes L_q(\bs\pi_J)\cong  L_q((\bs\omega_{i,a,r})_J)\otimes L_q(\bs\varpi)\otimes L_q(\bs\varpi'),
	\end{equation*}
	is reducible. The first conclusion now follows from \Cref{c:sJs}. Note $G(\bs\varpi)$ is a singleton if and only if $\bs\varpi = (\bs\omega_{j,aq^m,s})_J$. Thus, the second conclusion follows from the first and the assumption on $m$.
\end{proof}

The next lemma is immediate from the previous.

\begin{lem}\label{l:j'inJ}
	Let $J$ be a connected subdiagram such that $[i,j]\subseteq J$ and $|m|\in\mathscr R^{r,s}_{i,j,J}$. If $L_q((\bs\omega_{j,aq^m,s})_J)\otimes L_q(\bs\omega'_J)$ is simple for every $q$-factor $\bs\omega'$ of $\bs\pi$ adjacent to $\bs\omega_{j,aq^m,s}$ in $G(\bs\pi)$, then $V$ is reducible. In particular, if $V$ is simple, there exist $j'\in J, s'\in\mathbb Z_{>0}$, and $m'\in\pm \mathscr R_{j,j',J}^{s,s'}$ such that $\bs\omega_{j',aq^{m- m'},s'}$ is a $q$-factor of $\bs\pi\bs\omega_{j,aq^m,s}^{-1}$.\hfil\qed
\end{lem}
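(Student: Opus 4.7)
The plan is to apply \Cref{l:disconect} with the singleton choice $\bs\varpi=(\bs\omega_{j,aq^m,s})_J$. Once I verify that the connected component of $G(\bs\pi_J)$ containing $(\bs\omega_{j,aq^m,s})_J$ consists of this vertex alone, the hypothesis $|m|\in\mathscr R^{r,s}_{i,j,J}$ immediately gives reducibility of $L_q((\bs\omega_{i,a,r})_J)\otimes L_q((\bs\omega_{j,aq^m,s})_J)$, and \Cref{l:disconect} delivers the reducibility of $V$.

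The identification of vertices of $G(\bs\pi_J)$ is straightforward: condition \eqref{e:areqfact} is governed entirely by the sets $\mathscr R_i^{r,s}$, which are intrinsic to a color and so unaffected by the passage to the subdiagram $J$. Consequently, the $q$-factors of $\bs\pi_J$ are precisely the nontrivial restrictions $\bs\omega'_J$ of the $q$-factors $\bs\omega'$ of $\bs\pi$ whose color lies in $J$. Now suppose $(\bs\omega_{j,aq^m,s})_J$ is adjacent in $G(\bs\pi_J)$ to some other vertex $\bs\omega'_J$, with $\bs\omega'=\bs\omega_{j',b,s'}$ a $q$-factor of $\bs\pi$ and $j'\in J$. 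Adjacency in $G(\bs\pi_J)$ forces $b=aq^{m-m'}$ with $|m'|\in\mathscr R^{s,s'}_{j,j',J}$; by \eqref{incredsets} this gives $|m'|\in\mathscr R^{s,s'}_{j,j'}$, so $\bs\omega'$ and $\bs\omega_{j,aq^m,s}$ are adjacent in $G(\bs\pi)$. The hypothesis then asserts that $L_q((\bs\omega_{j,aq^m,s})_J)\otimes L_q(\bs\omega'_J)$ is simple, contradicting their adjacency in $G(\bs\pi_J)$. Hence $(\bs\omega_{j,aq^m,s})_J$ is isolated in its connected component, and the first assertion follows.

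The ``in particular'' statement is the contrapositive of the first assertion, unpacked using the observation that reducibility of $L_q((\bs\omega_{j,aq^m,s})_J)\otimes L_q(\bs\omega'_J)$ forces $\bs\omega'_J$ to be nontrivial and to contribute an exponent in $\pm\mathscr R^{s,s'}_{j,j',J}$: if $V$ is simple, some $q$-factor $\bs\omega'=\bs\omega_{j',aq^{m-m'},s'}$ of $\bs\pi\bs\omega_{j,aq^m,s}^{-1}$ adjacent to $\bs\omega_{j,aq^m,s}$ must violate the simplicity hypothesis, which forces $j'\in J$ and $m'\in\pm\mathscr R^{s,s'}_{j,j',J}$. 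The only real bookkeeping is the matching of $q$-factors of $\bs\pi_J$ with those of $\bs\pi$; beyond this, everything is a direct consequence of \eqref{incredsets} and \Cref{l:disconect}, in line with the author's parenthetical ``immediate from the previous''.
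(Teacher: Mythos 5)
Your argument is correct and is exactly the intended one: the paper gives no written proof beyond ``immediate from the previous,'' and your proposal is precisely the natural unpacking of that remark, namely using \eqref{incredsets} to see that the hypothesis forces $(\bs\omega_{j,aq^m,s})_J$ to be a singleton component of $G(\bs\pi_J)$ and then invoking the last assertion of \Cref{l:disconect}. The bookkeeping about $q$-factors of $\bs\pi_J$ being the restrictions of those of $\bs\pi$, and the unpacking of the ``in particular'' clause, are both handled correctly.
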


Henceforth, we assume $\bs\pi$  has two $q$-factors, say
\begin{equation*}
	\bs\pi = \bs\omega_{j,aq^m,s}\bs\omega_{j',b,s}.
\end{equation*}
Note condition (i)  of the above lemma becomes equivalent to $j'\ne j$, 
since we are assuming $L_q(\bs\omega_{j,aq^m,s})\otimes L_q(\bs\omega_{j',b,s'})$ is reducible. This assumption also implies we must have 
\begin{equation*}
	aq^m=bq^{m'} \qquad\text{with}\qquad |m'|\in \mathscr R_{j,j'}^{s,s'}. 
\end{equation*}
We have the following possibilities for $G:=G(\bs\pi\bs\omega_{i,a,r})$: a line (directed or alternating) with $\bs\omega_{i,a,r}\in\partial G$ or a triangle (with  $\bs\omega_{i,a,r}$ extremal or not). So, the pictures, up to arrow duality, are:

\setlength{\unitlength}{.4cm}
\begin{enumerate}[(1)]
	\item \begin{tikzcd}
	\stackrel{r}{i}  \arrow[r,"|m|"] & \stackrel{s}{j} \arrow[r,"m'"] & \stackrel{s'}{j'} 
\end{tikzcd}  (which implies $m<0$ and $m'-m\notin\mathscr R_{i,j'}^{r,s'}$); 
	
	\item\label{altcase}
 \begin{tikzcd}
	\stackrel{r}{i} & \arrow[swap,l,"m"]  \stackrel{s}{j} \arrow[r,"m'"] & \stackrel{s'}{j'} 
\end{tikzcd}(which implies $m>0$ and $|m-m'|\notin\mathscr R_{i,j'}^{r,s'}$); 
	
	\item\label{triangcase} 
 \begin{tikzcd}[column sep=small,row sep=tiny]
	& \stackrel{r}{i} \arrow[dr,"m''"]& \\  \stackrel{s}{j} \arrow[ur,"m"] \arrow[rr,"m'"] & & \stackrel{s'}{j'} 
\end{tikzcd}(which implies $m>0$ and $m''= m'-m\in\mathscr R_{i,j'}^{r,s'}$); 
	
	\item \begin{tikzcd}[column sep=small,row sep=tiny]
	& \stackrel{r}{i} & \\  \stackrel{s}{j} \arrow[ur,"m"] \arrow[rr,"m'"] & &  \arrow[swap,ul,"m''"] \stackrel{s'}{j'} 
\end{tikzcd}\hspace{5pt} or\hspace{5pt} \begin{tikzcd}[column sep=small,row sep=tiny]
& \arrow[swap,dl,"-m"] \stackrel{r}{i} \arrow[dr,"m''"]& \\  \stackrel{s}{j}  \arrow[rr,"m'"] & & \stackrel{s'}{j'} 
\end{tikzcd} (with $m>0$ in the first case and $m<0$ in the second).  \vspace{20pt}
\end{enumerate}
We chose to draw the pictures corresponding to the case $m'>0$. The cases with $m'<0$ are obtained by arrow duality (which preserves the property of the studied tensor product being simple or not). In the last two cases, the arrow duality interchanges the roles of $j$ and $j'$ in the following arguments. 
The reducibility of $V$ if $G$ is as in (1) or (4) follows from \Cref{c:source<-sink}, while case (3) follows from \Cref{t:toto} for type $A$. The next subsection is dedicated to the study of case (2), thus proving \Cref{t:3lineprime}.

\subsection{The proof of \Cref{t:3lineprime}}\label{ss:3linecut} We now suppose $G$ is as in \eqref{altcase} of the previous subsection. Let $J\subseteq I$ denote a minimal connected subdiagram satisfying
\begin{equation*}
	i,j\in J \quad\text{and}\quad m\in\mathscr R_{i,j, J}^{r,s}.
\end{equation*}
One easily checks using the characterization of the sets $\mathscr R_{i,j}^{r,s}$ in \eqref{t:krredsets} that, if $\lie g$ is of type $A$, $J$ is unique. In that case, we will show that $V$ is simple if and only if
\begin{equation}\label{2gencond}
	j'\in J, \qquad m'\in{\mathscr{R}_{j,j',J}^{s,s'}}, \qquad |m-m'-\check h_J|\in\mathscr R_{w_0^J(i),j',J}^{r,s'},
\end{equation}
and
\begin{equation}\label{2exracond}
	m-m'+1\notin\mathscr{R}_{i,j',J}^{r-1,s'}  \quad\text{if}\quad r>1.
\end{equation}

We begin by showing that \eqref{2gencond} holds if $V$ is simple and $\lie g$ is simply laced. The first two conditions are immediate from \Cref{l:j'inJ}.  For the last condition, assume $j'\in J, m'\in{\mathscr{R}_{j,j'}^{s,s'}}_J$,  consider 
\begin{equation*}
	N= L_q(\bs{\omega}_{j,aq^m,s})\otimes L_q(\bs{\omega}_{i,a,r}) \quad\textrm{and}\quad N'= L_q(\bs{\omega}_{j,aq^m,s})\otimes L_q(\bs{\omega}_{j',aq^{m-m'},s'}),
\end{equation*}
and note that $N_J$ and $N'_J$ are highest-$\ell$-weight and reducible. Thus, by the second part of  \Cref{p:critred2p}, $V_J$ is reducible provided
\begin{equation*}
	L_q((\bs{\omega}_{i,a,r})_J)\otimes L_q((\bs{\omega}_{j',aq^{m-m'},s'})_J)^*
\end{equation*}
is simple or, equivalently, 
$$|m-m'-\check h_J|\notin \mathscr R_{i,w_0^J(j'),J}^{r,s'} = \mathscr R_{w_0^J(i),j',J}^{r,s'}.$$
For type $A$, we shall see below that 
\begin{equation*}
	|m-m'-\check h_J| = m'-m+\check h_J
\end{equation*}
and \eqref{2exracond} is equivalent to
\begin{equation}\label{2exracondend}
	m+r\le m'+s' + d(i,j').
\end{equation}

Henceforth, we assume \eqref{2gencond} holds and $\lie g$ is of type $A$. In that case, we show $V$ is simple if and only \eqref{2exracond} holds. Note
\begin{equation*}
	V':= L_q(\bs\pi)\otimes L_q(\bs\omega_{i,a,r})
\end{equation*}
is highest-$\ell$-weight. Therefore, in light of \Cref{c:vnvstar},
\begin{equation}\label{Vishw}
	V \text{ is simple}\quad\Leftrightarrow\quad V\text{ is highest-$\ell$-weight.}
\end{equation}

Since $\lie g$ is of type $A$, \eqref{t:krredsets} implies
\begin{equation}\label{e:2tp1Am}
	m = r+s+d(i,j)-2p \quad\text{for some}\quad -d([i,j],\partial I)\le p<\min\{r,s\}
\end{equation}
and
\begin{equation}\label{e:2tp1Am'}
	m' = s+s'+d(j,j')-2p' \quad\text{for some}\quad -d([j,j'],\partial I)\le p'<\min\{s,s'\}.
\end{equation}
Moreover, $m\notin\mathscr R_{i,j,[i,j]}^{r,s}$ if and only if $p<0$ and similarly for $m'$.

The proof will proceed by separate analyzes of several cases and subcases. The main splitting is according the sign of the parameter $p$ defined in \eqref{e:2tp1Am}. The proof for $p\le 0$ is in \Cref{ss:p<0} while the case $p>0$ is treated in  \Cref{ss:p>0}. The knowledge of the case $p\le 0$ is used in  \Cref{ss:p>0}, so the proof of the latter is not independent of that of the former. To prove that $V$ is simple if \eqref{2exracond}  holds, in most cases we will prove that $V$ is highest-$\ell$-weight as an application of \Cref{cyc}. This will not be the case only for $p\le 0$ and $r=1$, which requires a more sophisticated argument which uses \eqref{socforl2}, \eqref{soctpd}, and, in one crucial moment, \Cref{rsh}. However, this case is used in the proof of other cases with $p\le 0$, as well as for the proof of the converse. So, indirectly, the whole proof depends on this more complicated argument for this particular subcase. To prove the converse, i.e., that $V$ is reducible if \eqref{2exracond} does not hold, we use \Cref{c:sJs} and Propositions \ref{p:critredp} and \ref{p:critred2p} for $p\le 0$, while, for $p>0$, we show $V$ is not highest-$\ell$-weight by contradiction using \Cref{cyc}.

\subsubsection{Technical Rephrasing of the Conditions}\label{ss:repcond}
We start by describing detailed conditions on the parameters $i,j,j',r,s,s',p,$ and $p'$ arising from
\eqref{2gencond} and \eqref{2exracond}. 
Recall \eqref{dikj} and that, since $\lie g$ is of type $A$, we have $d(i,\partial J) = d(j,\partial J)$, which implies
\begin{equation*}
	w_0^J(i) = j.
\end{equation*}
If $p'$ is given by  \eqref{e:2tp1Am'}, we have
\begin{equation}\label{defp=-}
	\begin{aligned}
		& m - m' = \pm ( r+s'+d(i,j') - 2p_\pm  ) \qquad\text{where}\\ p_+& = s'-p' + p + d_{i,j}^{j'} \qquad\text{and}\qquad p_- = r-p +p' + d_{j,j'}^i.
	\end{aligned}
\end{equation}
Note
\begin{gather}\notag
	|m-m'| = r+s'+d(i,j') - 2p_+ \quad\Leftrightarrow\quad 2p_+ \le r+s'+d(i,j') \quad\Leftrightarrow\quad m\ge m';
	\\ \text{and}\\ \notag
	|m-m'| = r+s'+d(i,j') - 2p_- \quad\Leftrightarrow\quad 2p_- \le r+s'+d(i,j') \quad\Leftrightarrow\quad m\le m'. 
\end{gather}
Thus, the condition $|m-m'|\notin \mathscr R_{i,j'}^{r,s'}$, arising from the assumption that $G$ is as in (2) of \Cref{ss:tpfund}, can be rephrased as
\begin{gather}\notag
	p_+ < -d([i,j'],\partial I) \quad\text{or}\quad p_+\ge \min\{r,s'\}, \qquad\text{if}\qquad m\ge m',\\ \label{notriang} \quad\text{and}\quad\\ \notag  p_- < -d([i,j'],\partial I) \quad\text{or}\quad p_-\ge \min\{r,s'\}, \qquad\text{if}\qquad m\le m'.
\end{gather}
In what follows, any statement about $p_+$ will carry implicitly the assumption that $m\ge m'$ and similarly for $p_-$.

\noindent {\bf Case 1.} 
If $p\le 0$, we have
\begin{equation}\label{J<>p}
	d(i,\partial J) = d(j,\partial J) =-p \qquad\text{and}\qquad \check h_J = d(i,j)-2p+2,
\end{equation}
and the first two requirements in \eqref{2gencond} imply
\begin{equation}\label{weird}
	-p'\le d([j,j'], \partial J)= d([i,j'], \partial J) = -p-d_{i,j}^{j'}.
\end{equation}
Indeed, one easily checks the equalities follow from \eqref{J<>p} and the condition $j'\in J$. The inequality is  immediate from the definition of $\mathscr R_{j,j',J}^{s,s'}$ and the second condition in \eqref{2gencond}. Note  \eqref{weird} implies 
\begin{equation*}
	m-m' = r-s' +2(p'-p) + d(i,j)-d(j',j) \ge r-s' + d(i,j').
\end{equation*}
Thus, 
\begin{equation}\label{m<m'=>r<s'}
	m\le m' \quad\Rightarrow\quad r\le s' - d(i,j') \le s'.
\end{equation}
Let us see that \eqref{weird} also implies 
\begin{equation}\label{2lbp+-}
	p_\pm \ge \min\{r,s'\} \qquad\text{and}\qquad p_+\le s'.
\end{equation}
In the case of $p_-$, \eqref{weird} implies
\begin{equation*}
	p_- = r+p'+d^i_{j,j'} - p \ge r+d^i_{j,j'}+ d^{j'}_{i,j}\ge r.
\end{equation*}
For $p_+$, \eqref{weird} implies
\begin{equation*}
	p_+ = s'-p'+p+d^{j'}_{i,j} \le s'.
\end{equation*}
It remais to check the first inequality for $p_+$. By \eqref{notriang}, it suffices to show we cannot have $p_+ < -d([i,j'],\partial I)$. Indeed, if this were the case, we would have
\begin{equation*}
	s'-p'+p+d^{j'}_{i,j}=p_+< -d([i,j'],\partial I)\le -d([i,j'],\partial J) = p+d_{i,j}^{j'},
\end{equation*}
which yields a contradiction since $p'<s'$. In particular, it follows from \eqref{2lbp+-} that
\begin{equation}\label{2p+=}
	r\ge s'   \quad\Rightarrow\quad p_+=s'.
\end{equation}

Next, we check the third condition in \eqref{2gencond} is equivalent to 
\begin{equation}\label{3rfP<0}
	|m-m'-\check h_J| = -(m-m'-\check h_J)\in\mathscr R_{j,j',J}^{r,s'}.
\end{equation}
To do that, it suffices to check
\begin{equation*}
	m-m'-\check h_J\notin\mathscr R_{j,j',J}^{r,s'}.
\end{equation*}
If $m-m'\le 0$, this is obvious since $m-m'-\check h_J<0$. Otherwise, 
\begin{align*}
	m - m' - h_J = r+s'+d(i,j') - 2p_+ - h_J = r+s' + d(j,j') - 2(p_+ - p + d^{j}_{i,j'} + 1)
\end{align*}
and we are done since
\begin{equation*}
	p_+ - p + d^{j}_{i,j'} + 1 \ge \min\{r,s'\} - 0 + 0 + 1 > \min\{r,s'\}.
\end{equation*}
Let us see   \eqref{3rfP<0} implies 
\begin{equation}\label{p'<ifp<}
	p'\le \min\{r,s'\}-r\le 0.
\end{equation}
Indeed, 
\begin{equation*}
	-(m-m'-\check h_J) = r+s'+d(j,j') - 2(r+p' -1),
\end{equation*}
and the claim follows from the description of $\mathscr R_{j,j',J}^{r,s'}$. This completes the information we needed to extract from  \eqref{2gencond}.

Next, for $r>1$, we check that \eqref{2exracond} is equivalent to
\begin{equation}\label{2exracond'}
	r\le s'. 
\end{equation}
If $m<m'$, then \eqref{2exracond} is obvious and \eqref{2exracond'} follows from \eqref{m<m'=>r<s'}. Otherwise, we have 
\begin{equation*}
	m-m'+1 = (r-1)+s'+d(i,j')-2(p_+-1),
\end{equation*}
and it lies in $\mathscr R_{i,j',J}^{r-1,s'}$ if and only if $-d([i,j'],\partial J)\le p_+-1<\min\{r-1,s'\}$. The first inequality in \eqref{2lbp+-} implies this is the same as
\begin{equation}\label{2exracondthink}
	\min\{r,s'\}-1  \le p_+-1<\min\{r-1,s'\},
\end{equation}
which is clearly impossible if $r\le s'$. For $r>s'$, \eqref{2p+=} implies $p_+=s'$ and, hence, $m-m'+1\in \mathscr R_{i,j',J}^{r-1,s'}$. 

Finally, let us check that \eqref{2exracond} is equivalent to \eqref{2exracondend}. Using \eqref{e:2tp1Am} and \eqref{e:2tp1Am'}, one easily checks the latter is equivalent to
\begin{equation}\label{2exracondend'}
	r-s'\le p-p' +  d^{j'}_{i,j}.
\end{equation}
This, together with \eqref{weird}, clearly implies \eqref{2exracond'} and, hence,  \eqref{2exracond}. Conversely, if \eqref{2exracond'} holds, then \eqref{2exracondend} is obvious if $m\le m'$. Otherwise, \eqref{2exracondend'} follows from the first inequality in \eqref{2lbp+-}.  In particular, \eqref{2exracondend'} holds if $r=1$.

\noindent {\bf Case 2.} If $p\ge 0$, then
\begin{equation}\label{Jp>0}
	J=[i,j] \qquad\text{and}\qquad \check h_J = d(i,j)+2
\end{equation}
and the first two conditions in \eqref{2gencond} immediately imply
\begin{equation}\label{p>0=>p'>0}
	j'\in [i,j] \qquad\text{and}\qquad p'\ge 0.
\end{equation}
Observe also that the first claim in \eqref{2lbp+-} remains valid. Indeed, in light of \eqref{notriang}, it suffices to show $p_\pm>0$. Since, by definition, $p<r$ and $p'<s'$, we have
\begin{equation*}
	p_- = (r-p)+p'+d^i_{j,j'} > 0 \qquad\text{and}\qquad p_+ = (s'-p')+p+d^{j'}_{i,j} >0.
\end{equation*}
We also observe that \eqref{3rfP<0} remains valid. As before, this is obvious if $m-m'\le 0$. Otherwise, 
\begin{align*}
	m - m' - h_J = r+s'+d(i,j') - 2p_+ - h_J = r+s' + d(j,j') - 2(p_+ + d(j,j') + 1)
\end{align*}
and we are done since
\begin{equation*}
	p_+ + d(j,j') + 1>p_+ \ge \min\{r,s'\}.
\end{equation*}
Using \eqref{defp=-}, we also have
\begin{equation*}
	-(m-m'-h_J) = r+s'+d(j,j') - 2(r-p+p'-1).
\end{equation*}
Thus,  \eqref{3rfP<0} is equivalent to
\begin{equation}\label{3rfP<0'}
	0\le r-p+p'-1< \min\{r,s'\},
\end{equation}
which, in particular, implies
\begin{equation}\label{p>0=>p>p'}
	p\ge p' \quad\text{as well as}\quad r \le s'+p-p' \ \text{ if }\ r\ge s'.
\end{equation}

Next, we check that \eqref{2exracond} is equivalent to 
\begin{equation}\label{2exracondp>0}
	r\le s' \qquad\text{or}\qquad p\ne p'.
\end{equation}
Indeed, if $m<m'$, then  \eqref{2exracond} is obvious and, if $r>s'$, the second part of \eqref{p>0=>p>p'} implies $p\ne p'$,
showing \eqref{2exracondp>0} holds. Otherwise, studying \eqref{2exracondthink}, we see
\begin{equation*}
	m-m'+1\in \mathscr R_{i,j',J}^{r-1,s'} \qquad\Leftrightarrow\qquad r> s' \quad\text{and}\quad p_+=s'.
\end{equation*}
Now \eqref{defp=-} implies the condition $p_+=s'$ is equivalent to $p=p'$, completing the proof. 

Finally, let us check that \eqref{2exracond} is equivalent to \eqref{2exracondend} or, equivalently, to  \eqref{2exracondend'} which, in this case, reduces to
\begin{equation}\label{2exracondend''}
	r-s'\le p-p'.
\end{equation}
Evidently, \eqref{2exracondend''} implies \eqref{2exracondp>0}. Conversely, \eqref{2exracondend''} clearly holds if $r\le s'$ (which includes the case $r=1$). Otherwise, if $p\ne p'$, \eqref{2exracondend''} follows from the second statement in \eqref{p>0=>p>p'}.

\subsubsection{The Proof - Case 1}\label{ss:p<0}  It follows from \Cref{ss:repcond} that, for $p\le 0$, \eqref{2exracond} is equivalent to  $r\le s'$. The case $r\le s'$ will be treated in Step 1 below while Step 2 will deal with the case $r>s'$.
We remark that Step 1 with $r=1$ will be used in the proof of Step 2.

\noindent {\bf Step 1.} Assume $r\le s'$.  We proceed by induction on $r$. The proofs of the cases $r=1$ and $r>1$ can be read independently. The argument for the latter is simpler and only evokes the simplicity of $V$ in the case $r=1$ with no partial use of its proof. 

\noindent {\bf Base of induction.} We will now treat the case $r=1$. However, we will prepare the argument without this restriction and set $r=1$ later on. We will also use a subinduction on $s$, which clearly starts for $s=0$, since $m-m'\notin\mathscr R_{i,j'}^{r,s'}$. Thus, we assume $s\ge 1$.   

If $V$ were reducible, so would be $V'$ and therefore, there would exist $\bs\lambda\in\mathcal P^+$ such that 
\begin{equation}\label{e:lambdanotop}
	\bs\lambda < \bs\pi\bs\omega_{i,a,r}.
\end{equation}
and nonzero homomorphisms
\begin{equation}\label{lambdainV'}
	L_q(\bs\lambda)\to V' \quad\text{and}\quad V\to L_q(\bs\lambda).
\end{equation}
Since $ L_q(\bs\omega_{j',b,s'})\otimes L_q(\bs\omega_{i,a,r})$ is simple, \eqref{e:toplow} implies  there exists $\bs\nu\in\mathcal P^+$ such that
\begin{equation}
	\bs\lambda = \bs\nu\bs\omega_{j',aq^{m-m'},s'} \qquad\text{and}\qquad L_q(\bs\nu)\hookrightarrow T_{r,s}^m:=L_q(\bs\omega_{j,aq^m,s})\otimes  L_q(\bs\omega_{i,a,r}).
\end{equation}
In fact, $L_q(\bs\nu)$ is the socle of $T_{r,s}^m$ and, since $m>0$, we have $\bs\nu < \bs\omega_{i,a,r}\bs\omega_{j,aq^m,s}$.

Setting $\widetilde{\bs\pi}=\bs\pi(\bs\omega_{j,aq^{m+s-1}})^{-1} = \bs\omega_{j,aq^{m-1},s-1}\bs\omega_{j',aq^{m-m'},s'},$ note also that we have an inclusion 
\begin{equation}\label{e:tilpitpj}
	L_q(\bs\pi) \hookrightarrow  L_q(\widetilde{\bs\pi})\otimes L_q(\bs\omega_{j,aq^{m+s-1}}),
\end{equation}
which can be combined with the above one to obtain
\begin{equation*}
	L_q(\bs\lambda)\hookrightarrow    L_q(\widetilde{\bs\pi})\otimes T_{r,1}^{m+s-1}.
\end{equation*}
By \eqref{e:frobrec}, this gives rise to a nonzero map
\begin{equation*}
	L_q(\widetilde{\bs\pi})^*\otimes L_q(\bs\lambda)\to T_{r,1}^{m+s-1}.
\end{equation*}
Let $L_q(\bs\mu)$ be a simple quotient of the image of this map. Thus, we get an epimorphism
\begin{equation*}
	L_q(\widetilde{\bs\pi})^*\otimes L_q(\bs\lambda)\to L_q(\bs\mu)
\end{equation*}
and, using \eqref{e:frobrec} again, an inclusion
\begin{equation}\label{e:linm}
	L_q(\bs\lambda)\hookrightarrow T':=L_q(\widetilde{\bs\pi})\otimes L_q(\bs\mu).
\end{equation}
Since $L_q(\bs\mu)$ is a simple factor of $T_{r,1}^{m+s-1}$, a description of the simple factors of $T_{r,1}^{m+s-1}$, which is done in \Cref{soctpkr} for $r=1$, allows us to study the possibilities for $\bs\mu$ to reach a contradiction. 

For instance, if $\bs\mu=\bs\omega_{j,aq^{m+s-1}}\bs\omega_{i,a,r}$, we claim
\begin{equation*}
	L_q(\bs\mu) \otimes L_q(\widetilde{\bs\pi}) \quad\text{is highest-$\ell$-weight.}
\end{equation*}
Using this and \Cref{l:subsandq}, it follows from \eqref{e:linm} that
\begin{equation}\label{e:linm'}
	\bs\lambda = \bs\mu\widetilde{\bs\pi} = \bs\pi\bs\omega_{i,a,r},
\end{equation}
contradicting \eqref{e:lambdanotop}. Thus, we can assume 
\begin{equation}\label{smallmu}
	\bs\mu < \bs\omega_{j,aq^{m+s-1}}\bs\omega_{i,a,r}.
\end{equation}
To prove the claim, note 
\begin{equation*}
	m+s-1\ge m-1\ge m-m'.
\end{equation*}
Thus, by \Cref{l:hlwquot}, we are left to check that
\begin{gather*}
	\tilde V:= L_q(\bs\omega_{i,a,r})\otimes  L_q(\bs{\tilde\pi})  \quad\text{is highest-$\ell$-weight.}
\end{gather*}
We proceed by induction on $s$ to show it is actually simple. The case $s=1$ is immediate from the assumption $|m-m'|\notin \mathscr R_{i,j'}^{r,s'}$. Otherwise, wee need to check whether $\tilde V$ satisfies all the assumptions we have made on $V$ or not, starting by checking that its $q$-factorization graph is as in (2) of \Cref{ss:tpfund}. We have replaced $m$ by $m-1$ and $s$ by $s-1$, so the condition required on the graph rephrases as
\begin{equation*}
	m-1\in\mathscr R_{i,j}^{r,s-1}, \qquad (m-1)-(m-m')\in\mathscr R_{j,j'}^{s-1,s'}, \quad\text{and}\qquad |m-m'|\notin\mathscr R_{i,j'}^{r,s'}.
\end{equation*}
Evidently, the last of these is satisfied. Note
\begin{equation*}
	m-1 = r+(s-1) + d(i,j) - 2p
\end{equation*}
and, since we are assuming $-d([i,j],\partial I)\le p\le 0$, the first condition is verified. The checking of the second is similar since we know $p'\le 0$ by \eqref{p'<ifp<}. The inductive argument is completed by checking that $\tilde V$ satisfies the corresponding version of \eqref{2gencond}, since  \eqref{2exracond'} is obviously satisfied. All conditions are easily checked after noting that $J$ is also the minimal subdiagram satisfying
\begin{equation*}
	i,j\in J \qquad\text{and}\qquad m-1\in\mathscr R_{i,j,J}^{r,s-1}.
\end{equation*}
Indeed, if $J'\subsetneqq J$, we have
\begin{equation*}
	d([i,j],\partial J')< d([i,j],\partial J) \stackrel{\eqref{J<>p}}{=} -p.
\end{equation*}
But then,
\begin{equation*}
	m-1 = r+(s-1) + d(i,j) -2p \notin \mathscr R_{i,j,J'}^{r,s-1}.
\end{equation*}
This completes the proof that we can assume \eqref{smallmu}.

Note \eqref{smallmu} implies $T_{r,1}^{m+s-1}$ is reducible and, hence,
\begin{equation}\label{newpforsmallmu}
	m+s-1 = r+1 + d(i,j) - 2\left(p-s +1\right) \in\mathscr R_{i,j}^{r,1}.
\end{equation}
We now set $r=1$. To simplify the use of the formulas from  \Cref{soctpkr},  we assume, without loss of generality, that $i\ge j$.  Recall $T_{1,1}^{m+s-1}$ has length $2$ by \eqref{soctpd} and \eqref{socforl2}. Using  \eqref{newpforsmallmu} in \eqref{socforl2}, we have
\begin{equation}
	\bs\mu=\bs\omega_{j+p-s,aq^{s-p+d(i,j)}}\ \bs\omega_{i+s-p,aq^{s-p}}.
\end{equation}
We claim that 
\begin{equation}\label{T'=lambda}
	T'\cong L_q(\bs\lambda).
\end{equation}
In particular, the first equality in \eqref{e:linm'} remains valid and  \eqref{lambdainV'} implies there exists  a monomorphism $T'\to V'$. Let $K$ be the minimal connected subdiagram such that $m+s-1\in\mathscr R_{i,j,K}^{1,1}$. It follows from \eqref{newpforsmallmu} and \eqref{e:tpfroots=min} that
\begin{equation*}
	K = [j+p-s+1,i-p+s-1] \quad\text{and, hence,}\quad \bs\mu_K=\bs 1.
\end{equation*}
Moreover, it follows from  the first equality in \eqref{e:linm'}, \eqref{e:tpfroots},  and \eqref{e:tpfroots=min} that
\begin{align*}
	\omega_i + \wt(\bs\pi) - \wt(\bs\lambda) & = \omega_i + \wt(\bs\pi) - (\wt(\bs\mu)+\wt(\widetilde{\bs\pi})) = \omega_i + (\wt(\bs\pi) - \wt(\widetilde{\bs\pi}))-\wt(\bs\mu) \\
	& = \omega_i + \omega_j - \wt(\bs\mu)\in Q_K^+.
\end{align*}
Therefore,  \Cref{p:subJrest} implies we also have a monomorphism 
\begin{equation*}
	L_q(\widetilde{\bs\pi}_K)\to  L_q(\bs\pi_K)\otimes L_q((\bs\omega_{i,a})_K)
\end{equation*}
An application of \eqref{e:frobrec} for the subalgebra $U_q(\tlie g)_K$ implies we have an epimorphism
\begin{equation*}
	L_q(\widetilde{\bs\pi}_K)\otimes L_q((\bs\omega_{i,a})_K)^*\to  L_q(\bs\pi_K)
\end{equation*}
Since $h^\vee_K = (i-p+s-1)-(j+p-s+1)+2 = m+s-1$ and $w_0^K(i)=j$, this is the same as
\begin{equation*}
	L_q(\widetilde{\bs\pi}_K)\otimes L_q((\bs\omega_{j,aq^{m+s-1}})_K)\to  L_q(\bs\pi_K)
\end{equation*}
Combining with \eqref{e:tilpitpj}, it follows that this map is an isomorphism. However, let us see that
\begin{equation*}
	L_q(\widetilde{\bs\pi}_j)\otimes L_q((\bs\omega_{j,aq^{m+s-1}})_j)\cong L_q((\bs\omega_{j',aq^{m-m'},s'})_j)\otimes L_q((\bs\omega_{j,aq^{m-1},s-1})_j)\otimes L_q((\bs\omega_{j,aq^{m+s-1}})_j)
\end{equation*}
which is reducible since $\bs\omega_{j,aq^{m-1},s-1}\bs\omega_{j,aq^{m+s-1}}=\bs\omega_{j,aq^m,s}$, reaching the desired contradiction. Indeed, if $j'\ne j$ the isomorphism is clear since $(\bs\omega_{j',aq^{m-m'},s'})_j=\bs 1$. Otherwise, if $j'=j$, the isomorphism follows because $\bs\omega_{j',aq^{m-m'},s'}$ and $\bs\omega_{j,aq^m,s}$ are $q$-factors of $\bs\pi$ or, equivalently, $m'\notin\mathscr R_{j,j}^{s,s'}$. Indeed, the isomorphism is not true if and only if $$m'-1 = (m-1)-(m-m')\in\mathscr R_{j,j}^{s-1,s'},$$
which implies $m'\in\mathscr R_{j,j}^{s,s'}$. 

It remains to prove \eqref{T'=lambda}. By \eqref{soctpd} and  \Cref{irred}, it suffices to show that
\begin{equation}
	\begin{aligned}
		& L_q(\bs\omega_{j',aq^{m-m'},s'}\bs\omega_{j,aq^{m-1},s-1})\otimes L_q(\bs\omega_{j+p-s,aq^{s-p+d(i,j)}})  \quad\text{and}\\ & L_q(\bs\omega_{j',aq^{m-m'},s'}\bs\omega_{j,aq^{m-1},s-1})\otimes L_q(\bs\omega_{i+s-p,aq^{s-p}}) \quad\text{are simple.}
	\end{aligned} 
\end{equation}
By \Cref{l:hlwquot}, this follows, respectively, if we check.
\begin{equation}\label{T'=lambda1}
	|m-m'-(s-p+d(i,j))|\notin\mathscr{R}_{j+p-s,j'}^{1,s'},\qquad |m-1-(s-p+d(i,j))|\notin\mathscr{R}_{j+p-s,j}^{1,s-1},
\end{equation}
\begin{equation}\label{T'=lambda2}
	|m-m'-(s-p)|\notin\mathscr{R}_{i+s-p,j'}^{1,s'}, \qquad\text{and}\qquad |m-1 - (s-p) |\notin\mathscr{R}_{i+s-p,j}^{1,s-1}.
\end{equation}
The second condition in each of these lines is vacuous if $s=1$.

We start with the first condition in \eqref{T'=lambda2}. If  $m-m'-(s-p)\ge 0$, then $m\ge m'$ and \eqref{defp=-} implies
\begin{align*}
	m-m' - (s-p) & =  1+s' + d(i,j') - 2p_+ -(s-p)\\
	& = 1+s' + d(i+s-p,j') -2(p_+-d^{i}_{i+s-p,j'}+s-p).
\end{align*}
Thus, it suffices to check that $p_+-d^{i}_{i+s-p,j'}+s-p\ge 1= \min\{1,s'\}$. Indeed, using \eqref{eq:cotd} we get
\begin{equation*}
	p_+-d^{i}_{i+s-p,j'}+s-p \ge p_+-d(i,i+s-p)+s-p=p_+
\end{equation*}
and \eqref{2lbp+-} completes the checking. If $m-m'-(s-p)< 0$,  we have\footnote{Here we are using $p_-$ to shorten notation without making any assumption on the sign of $m-m'$.}
\begin{align*}
	-(m-m' - (s-p)) & =   1+s' + d(i+s-p,j') -2(p_--d^{i}_{i+s-p,j'}).
\end{align*}
But,
\begin{align*}
	p_--d^{i}_{i+s-p,j'} & \stackrel{\eqref{eq:cotd}}{\ge} p_- - d(i,j') \stackrel{\eqref{defp=-}}{=} (1-p +p' + d_{j,j'}^i )- d(i,j')\\ & \stackrel{\eqref{eq:cotd}}{=} 1-p +p' - d^{j'}_{i,j}\stackrel{\eqref{weird}}{\ge} 1,
\end{align*}
completing the proof of the first condition in \eqref{T'=lambda2}. As for the second, note
\begin{equation*}
	m-1-s+p \stackrel{\eqref{e:2tp1Am}}{=} d(i,j)-p = 1+(s-1) + d(i+s-p,j) -2s.
\end{equation*}
The middle expression shows this number is nonnegative and we are done since $s\ge\min\{1,s-1\}$. 

Next, consider the first condition in \eqref{T'=lambda1}. By \eqref{e:2tp1Am} and \eqref{e:2tp1Am'} we have
\begin{align*}
	m-m'-(s-p+d(i,j)) & = 1-s-s' + 2p' - p  -d(j,j')\\
	& =  1+s' +d(j',j+p-s) - 2(s'-p'+1 + d)
\end{align*}
where $$2d=d(j,j')+(j'-j)+2(s-1)\ge 0.$$ Since $p'\le 0$ by \eqref{p'<ifp<}, it follows that $s'-p'+1 + d\ge 1$ and, hence, 
$$m-m'-(1-p+d(i,j)) \notin\mathscr{R}_{j+p-1,j'}^{1,s'}.$$ 
On the other hand,
\begin{align*}
	-(m-m'-(s-p+d(i,j))) & = s+s'-1 - 2p' + p  +d(j,j')\\
	& =  1+s' +d(j',j+p-s) - 2(p'-p+1-d)
\end{align*}
where $2d = d(j,j')+(j-j')\ge 0$. By \eqref{weird}, we have
\begin{equation*}
	p'-p+1-d \ge 1+ d^{j'}_{i,j}-d = \frac{1}{2}(d(i,j')+j'-i)\ge 1 =\min\{1,s'\},
\end{equation*}
thus completing the proof of the first condition in \eqref{T'=lambda1}. As for the second, 
\begin{equation*}
	m-1-(s-p+d(i,j)) \stackrel{\eqref{e:2tp1Am}}{=} -p = 1+(s-1) + d(j+p-s,j) -2s.
\end{equation*}
The middle expression shows this number is nonnegative and we are done since $s\ge\min\{1,s-1\}$.

\noindent {\bf Inductive Argument.} For $r>1$ we will show $V$ is highest-$\ell$-weight which completes the proof by \eqref{Vishw}.  To prove this, it suffices to show that  
\begin{equation*}
	L_q(\bs\omega_{i,aq,r-1})\otimes L_q(\bs\omega_{i,aq^{1-r}})\otimes L_q(\bs\pi) \quad\text{is highest-$\ell$-weight.}
\end{equation*}
Since the tensor product of the first two factors is highest-$\ell$-weight, \Cref{cyc} implies it suffices to show 
\begin{equation}\label{r>1hlw}
	L_q(\bs\omega_{i,aq^{1-r}})\otimes L_q(\bs\pi) \quad\text{and}\quad  L_q(\bs\omega_{i,aq,r-1})\otimes L_q(\bs\pi) \quad\text{are highest-$\ell$-weight.}
\end{equation}
The remainder of the argument will show that these tensor products are actually simple, thus completing Step 1.

By \Cref{l:hlwquot}, \eqref{defredset}, and \eqref{e:krhwtp}, the first claim in \eqref{r>1hlw} follows if
\begin{equation}\label{p<0indhlw}
	m-(1-r) \notin\mathscr R_{i,j}^{1,s} \qquad\text{and}\qquad (m-m')-(1-r)\notin\mathscr R_{i,j'}^{1,s'}.
\end{equation}
For the second number, note that, if $m-m'\le 0$, then
\begin{equation*}
	m-m'-(1-r)\le r-1 < r+1+d(i,j') = \min \mathscr R_{i,j'}^{1,s'}.
\end{equation*}
Thus, we can assume $m>m'$ in which case we have
\begin{equation}
	(m-m')-(1-r) \stackrel{}{=} r+s'+d(i,j') - 2p_+ - (1-r) = 1+s' +d(i,j') - 2(p_+ - r+1).
\end{equation}
Since $r\le s'$, \eqref{2lbp+-} implies $p_+ - r+1\ge 1=\min\{1,s'\}$, thus proving the second claim in \eqref{p<0indhlw}.
For the first claim in \eqref{p<0indhlw}, we have
\begin{equation*}
	0\le m-(1-r) = 1+s + d(i,j) - 2(p-r+1).
\end{equation*}
If $p-r+1<-d([i,j],\partial I)$, \eqref{p<0indhlw} is proved. Otherwise,
instead of checking \eqref{p<0indhlw}, we assume its first claim fails and use the induction hypothesis to prove directly that the first tensor product in \eqref{r>1hlw} is simple. 

Note we have $a$ replaced by $aq^{1-r}$ and $r$ replaced by $1$, so $m-(1-r)$ plays the role that $m$ did while $p-r+1$ plays the role that $p$ did. In particular, the initial assumption of \Cref{ss:p<0} is satisfied. Let us check whether the associated $q$-factorization graph  is as in (2) of \Cref{ss:tpfund}. i.e., if
\begin{equation}\label{remain2i1}
	m-(1-r)\in\mathscr R_{i,j}^{1,s}\qquad m'\in\mathscr R_{j,j'}^{s,s'},\qquad\text{and}\qquad |m-m'-(1-r)|\notin\mathscr R_{i,j'}^{1,s'}.
\end{equation}
The second condition is obvious and we have already partially proved the third in \eqref{p<0indhlw}, remaining to consider the case $|m-m'-(1-r)|=-(m-m'-(1-r))$. In this case we necessarily have $m\le m'$ and, thus,
\begin{equation*}
	-(m-m'-(1-r)) = r+s'+d(i,j')-2p_-+(1-r) = 1+s'+d(i,j')-2p_-
\end{equation*}
and the proof is completed using \eqref{2lbp+-}. After this, note that if the first  condition on \eqref{remain2i1} fails, the  first tensor product in \eqref{r>1hlw} is simple by \Cref{l:hlwquot}. In this case we do not need to use the induction hypothesis on $r$.

Otherwise, the inductive argument on $r$ follows if we check the corresponding version of \eqref{2gencond} is satisfied. More precisely,
\begin{equation}\label{2gencond'}
	j'\in J', \qquad m'\in{\mathscr{R}_{j,j',J'}^{s,s'}}, \qquad |m-(1-r)-m'-\check h_{J'}|\in\mathscr R_{j,j',J'}^{1,s'},
\end{equation}
where $J'$ is the minimal subdigram satisfying
\begin{equation*}
	i,j\in J' \quad\text{and}\quad m-(1-r)\in\mathscr R_{i,j, J'}^{1,s}
\end{equation*}
Since $p-r+1\le 0$, $J'$ is characterized by
\begin{equation*}
	d(i,\partial J') = d(j,\partial J') = - (p-r+1) \qquad\text{and}\qquad \check h_{J'} = d(i,j) - 2(p-r).
\end{equation*}
It then follows from \eqref{J<>p} that $J\subseteq J'$ and this immediately implies the first two conditions in \eqref{2gencond'}.
Finally, using \eqref{e:2tp1Am}, \eqref{e:2tp1Am'}, and \eqref{J<>p}, one easily checks that
\begin{equation*}
	-(m-(1-r)-m'-\check h_{J'}) = 1+s' +d(j,j')-2p'.
\end{equation*}
Since $p'\le 0$ by \eqref{p'<ifp<}, the third condition in \eqref{2gencond'} follows if we check
\begin{equation*}
	p'\ge -d([j,j'],\partial J').
\end{equation*}
But this is immediate from  the second condition in \eqref{2gencond'}.

To prove the second claim in \eqref{r>1hlw}, we again use the induction hypothesis on $r$ to show the corresponding module is simple. We again start by checking whether the $q$-factorization graph is as in (2) of \Cref{ss:tpfund}, which this time rephrases as 
\begin{equation}
	m-1\in\mathscr R_{i,j}^{r-1,s}, \qquad m'\in\mathscr R_{j,j'}^{s,s'}, \qquad\text{and}\qquad |m-m'-1|\notin \mathscr R_{i,j'}^{r-1,s'}.
\end{equation}
The second condition is obvious and, since
\begin{equation*}
	m-1 = (r-1)+s + d(i,j)-2p
\end{equation*}
and we are assuming $p\le 0$, the first condition and the initial assumption of \Cref{ss:p<0}  are also obvious. For the third, if $m> m'$, we have
\begin{equation*}
	0\le m-m'-1 = (r-1)+s'+d(i,j') - 2p_+ \qquad\text{and}\qquad p_+\ge r>r-1=\min\{r-1,s'\}
\end{equation*}
where we used \eqref{2lbp+-} and the hypothesis $r\le s'$ to obtain the inequalities. If $m\le m'$, then
\begin{equation*}
	0< -(m-m'-1) = (r-1)+s'+d(i,j') - 2(p_--1).
\end{equation*}
Now,
\begin{equation*}
	p_--1 \stackrel{\eqref{defp=-}}{=} r-p +p' + d_{j,j'}^i-1 \stackrel{\eqref{weird}}{\ge} r + d^{j'}_{i,j} + d_{j,j'}^i-1 \ge r-1=\min\{r-1,s'\}.
\end{equation*}

To complete the inductive argument, since $r-1<s'$ and, hence, we remain under the assumption of Step 2, it remains to check the corresponding version of \eqref{2gencond}, which reads
\begin{equation}\label{2gencond''}
	j'\in J', \qquad m'\in{\mathscr{R}_{j,j',J'}^{s,s'}}, \qquad |m-1-m'-\check h_{J'}|\in\mathscr R_{j,j',J'}^{r-1,s'},
\end{equation}
where $J'$ is the minimal subdigram satisfying
\begin{equation*}
	i,j\in J' \quad\text{and}\quad m-1\in\mathscr R_{i,j, J'}^{r-1,s}
\end{equation*}
It is immediate that $J'=J$ and, therefore, it remains to check the third condition in \eqref{2gencond''}. 
Using \eqref{e:2tp1Am}, \eqref{e:2tp1Am'}, and \eqref{J<>p}, one easily checks that 
\begin{equation*}
	-(m-1-m'-\check h_{J}) = (r-1)+s'+d(j,j') - 2(p'+r-2)
\end{equation*}
and, therefore, we need to check
\begin{equation*}
	-d([j,j'],\partial J) \le p'+r-2 \le r-1=\min\{r-1,s'\}.
\end{equation*}
The first inequality is clear from \eqref{weird} since $r>1$, while the second is clear from \eqref{p'<ifp<}.

\noindent {\bf Step 2.} We now  show   $V$ is not simple if $r>s'$ or, equivalently, if
\begin{equation}\label{2exracond''}
	m-m'+1\in\mathscr{R}_{i,j',J}^{r-1,s'}.
\end{equation}
By \Cref{c:sJs}, it suffices show that $V_J$ is reducible while \eqref{2exracond''} implies
\begin{equation*}
	L_q((\bs\omega_{j',aq^{m-m'},s'})_J)\otimes L_q((\bs\omega_{i,aq^{-1},r-1})_J)
\end{equation*}
is reducible and highest-$\ell$-weight. \Cref{p:critredp} and \Cref{l:hlwquot} then imply
\begin{equation*}
	L_1:=L_q(\bs\pi_J)\otimes L_q((\bs\omega_{i,aq^{-1},r-1})_J) 
\end{equation*}
is also reducible and highest-$\ell$-weight. On the other hand, obviously so is
\begin{equation*}
	L_2: = L_q((\bs\omega_{i,aq^{r-1}})_J)  \otimes L_q((\bs\omega_{i,aq^{-1},r-1})_J).
\end{equation*}
By \Cref{p:critred2p}, we are done if we show
\begin{equation}\label{Nissimple}
	N: = L_q((\bs\omega_{i,aq^{r-1}})_J) \otimes  L_q(\bs\pi_J)^* \quad\text{is simple.}
\end{equation}
To do this, we will check the $q$-factorization graph of $N$ is  as in (2) of \Cref{ss:tpfund} and that the corresponding version of \eqref{2gencond} holds. Then, \eqref{Nissimple} follows from the $r=1$ case which has been shown already. 

Set
\begin{equation*}
	k=w_0^J(j') \qquad\text{and}\quad L_q(\bs\pi') \cong L_q(\bs\pi_J)^*. 
\end{equation*}
We start by checking that the  $q$-factorization graph of $N$ is
\begin{equation}\label{newgraph}
	\begin{tikzcd}
		\stackrel{1}{i} \arrow[r,"\tilde m"] & \stackrel{s'}{k} & \arrow[swap,l,"m'"] \stackrel{s}{i}
	\end{tikzcd} \quad \text{where}\quad \tilde m= r-1 - m +m'+\check h_J.
\end{equation}
Recall $i=w_0^J(j)$, and, therefore,
\begin{equation}\label{dualdist}
	d(i,k)=d(j,j'), \qquad d([j,j'],\partial J) = d([i,k],\partial J), 
\end{equation}
and
\begin{equation*}
	\bs\pi' = \bs\omega_{i,aq^{m-\check h_J},s}\ \bs\omega_{k,aq^{m-m'-\check h_J},s'}.
\end{equation*}
Evidently,
\begin{equation*}
	m' = (m-\check h_J) - (m-m'-\check h_J)\in\mathscr R_{i,k}^{s,s'}. 
\end{equation*}
Thus,  \eqref{newgraph} follows if we check 
\begin{equation*}
	|(m-\check h_J) - (r-1)|\notin \mathscr R_{i,i}^{1,s} \qquad\text{and}\qquad \tilde m \in \mathscr R_{i,k}^{1,s'}.
\end{equation*}
The first of these follows since 
\begin{equation*}
	0\le  1+s+d(i,i)-2\cdot 1=s-1 = (m-\check h_J) - (r-1) \quad\text{and}\quad 1\ge \min\{1,s\},
\end{equation*}
where we used \eqref{e:2tp1Am} and \eqref{J<>p} in the second equality. As for the second, one easily checks that
\begin{equation}\label{newm}
	\tilde m = (r-1)- (m-m'-\check h_J)  = 1+s'+d(j,j') - 2p' = 1+s'+d(i,k) - 2p' 
\end{equation}
and we know
\begin{equation*}
	- d([i,k],\partial J)=- d([j,j'],\partial J)\le p' \stackrel{\eqref{p'<ifp<}}{\le } 0.
\end{equation*}

Now, let us write down the corresponding version of \eqref{2gencond}. From \eqref{newm}, the minimal subdiagram $J'$ such that
\begin{equation*}
	i,k\in J' \quad\text{and}\quad \tilde m\in\mathscr R_{i,k, J'}^{1,s'}
\end{equation*}
is characterized by the property
\begin{equation*}
	d(i,\partial J')= d(k,\partial J') = -p' \qquad\text{and}\qquad \check h_{J'} = d(i,k) - 2p'+2.
\end{equation*}
Thus,  the corresponding version of \eqref{2gencond} reads
\begin{equation}\label{2gencondstep3}
	i\in J', \qquad m'\in{\mathscr{R}_{i,k,J'}^{s,s'}}, \qquad |\tilde m-m'- \check h_{J'}|\in\mathscr R_{k,i,J'}^{1,s}.
\end{equation}
The first condition is obvious and the second follows from the characterization of $J'$, \eqref{dualdist}, and \eqref{e:2tp1Am'}. Finally,  using \eqref{newm} as well, we see that
\begin{equation*}
	m'+\check h_{J'} - \tilde m = 1+s+ d(i,k),
\end{equation*}
which proves the third condition in \eqref{2gencondstep3}.

\subsubsection{The Proof - Case 2}\label{ss:p>0}

In light of \eqref{Vishw} and \eqref{2exracondp>0}, we need to show that $V$ is highest-$\ell$-weight if and only if $r\le s'$ or $r>s'$ and $p\ne p'$.  In Steps 1 and 2 below, we prove that $V$ is highest-$\ell$-weight if $r\le s'$ or $r>s'$ and $p\ne p'$, respectively. In Step 3, we show $V$ is not highest-$\ell$-weight if $r>s'$ and $p=p'$. The main tool in all the steps is \Cref{cyc}, although previously proved cases are used as well.

\noindent {\bf Step 1.} Assume $r\le s'$. We shall use \Cref{cyc} several times in order to show $V$ is highest-$\ell$-weight, which completes the proof by \eqref{Vishw}. Let 
\begin{equation*}
	\bs{\tilde\pi} = \bs\pi\bs\omega^{-1}_{j',aq^{m-m'+s'-p'},p'} = \bs\omega_{j,aq^m,s}\bs\omega_{j',aq^{m-m'-p'},s'-p'}.
\end{equation*}
To conclude that $V$ is highest-$\ell$-weight, we will show that the following tensor products are highest-$\ell$-weight:
\begin{equation}\label{pishorttopi}
	L_q(\bs\omega_{j',aq^{m-m'+s'-p'},p'}) \otimes L_q(\bs{\tilde\pi}),
\end{equation}
\begin{equation}\label{ij'rest}
	L_q(\bs\omega_{i,a,r})\otimes L_q(\bs\omega_{j',aq^{m-m'+s'-p'},p'}),
\end{equation}
and
\begin{equation}\label{ipishort}
	L_q(\bs\omega_{i,a,r})\otimes  L_q(\bs{\tilde\pi}).
\end{equation}
Together with \Cref{cyc}, this implies
\begin{equation*}
	L_q(\bs\omega_{i,a,r})\otimes L_q(\bs\omega_{j',aq^{m-m'+s'-p'},p'}) \otimes L_q(\bs{\tilde\pi})
\end{equation*}
is also highest-$\ell$-weight. Moreover, \eqref{pishorttopi} implies we have a surjective map
\begin{equation*}
	L_q(\bs\omega_{i,a,r})\otimes L_q(\bs\omega_{j',aq^{m-m'+s'-p'},p'}) \otimes L_q(\bs{\tilde\pi}) \to V,
\end{equation*}
showing $V$ is highest-$\ell$-weight.

To prove \eqref{pishorttopi} is highest-$\ell$-weight  we use Lemma \ref{l:hlwquot} which says it suffices to show 
\begin{equation*}
	L_q(\bs\omega_{j',aq^{m-m'+s'-p'},p'}) \otimes L_q(\bs\omega_{j',aq^{m-m'-p'},s'-p'}) \quad\text{and}\quad L_q(\bs\omega_{j',aq^{m-m'+s'-p'},p'})\otimes L_q(\bs\omega_{j,aq^m,s})
\end{equation*}
are highest-$\ell$-weight. The first is clear since $s'-p'>0$. Proving the second is equivalent to checking that
\begin{equation*}
	m-(m-m'+s'-p') \notin\mathscr R_{j,j'}^{s,p'}
\end{equation*}
if $p'>0$. By \eqref{e:2tp1Am'} this number is equal to
\begin{equation*}
	s+d(j,j')-p' = s+p' + d(j,j')-2p'
\end{equation*}
and $p'\ge \min\{s,p'\}=p'$.

Showing that \eqref{ij'rest} is highest-$\ell$-weight is equivalent to checking that
\begin{equation*}
	m-m'+s'-p'\notin \mathscr R_{i,j'}^{r,p'}
\end{equation*}
if $p'>0$. Using \eqref{e:2tp1Am} and \eqref{e:2tp1Am'}, we see this number is equal to
\begin{equation*}
	r+d(i,j)-2p - d(j,j')+p' = r+p'+d(i,j') - 2(p+d^{j'}_{i,j}).
\end{equation*}
Since \eqref{p>0=>p>p'} implies $p+d^{j'}_{i,j}\ge p'\ge \min\{r,p'\}$, the checking is completed.

To prove \eqref{ipishort} is highest-$\ell$-weight, we will show
\begin{equation}\label{ipishort'}
	L_q(\bs\omega_{i,aq^{r-p},p})\otimes L_q(\bs{\tilde\pi}) \qquad\text{and}\qquad L_q(\bs\omega_{i,aq^{-p},r-p})\otimes L_q(\bs{\tilde\pi})
\end{equation}
are also  highest-$\ell$-weight. Assuming this and noting that 
\begin{equation*}
	L_q(\bs\omega_{i,aq^{r-p},p})\otimes L_q(\bs\omega_{i,aq^{-p},r-p})
\end{equation*}
is highest-$\ell$-weight and maps onto $L_q(\bs\omega_{i,a,r})$, \Cref{cyc} implies
\begin{equation*}
	L_q(\bs\omega_{i,aq^{r-p},p})\otimes L_q(\bs\omega_{i,aq^{-p},r-p}) \otimes L_q(\bs{\tilde\pi})
\end{equation*}
is highest-$\ell$-weight and maps onto \eqref{ipishort}, thus completing the proof.

To prove the first tensor product in \eqref{ipishort'} is highest-$\ell$-weight  we use Lemma \ref{l:hlwquot} which says it suffices to show 
\begin{equation*}
	L_q(\bs\omega_{i,aq^{r-p},p}) \otimes  L_q(\bs\omega_{j,aq^m,s}) \quad\text{and}\quad L_q(\bs\omega_{i,aq^{r-p},p})\otimes L_q(\bs\omega_{j',aq^{m-m'-p'},s'-p'})
\end{equation*}
are highest-$\ell$-weight. For the first, note
\begin{equation*}
	m-(r-p) = s-p+d(i,j) = p+s+d(i,j)-2p
\end{equation*}
and $p\ge\min\{p,s\}=p$, showing $m-(r-p)\notin \mathscr R_{i,j}^{p,s}$ as desired.
As for the second, we have
\begin{align*}
	m-m'-p' -(r-p)  &= -s'   + d(i,j)-d(j',j) +p'-p\\ & \stackrel{\eqref{p>0=>p'>0}}{=} p+(s'-p')+d(i,j') - 2(p+s'-p').
\end{align*}
Since $p+s'-p'\ge\min\{p,s'-p'\}$, this number is not in $\mathscr R_{i,j'}^{p,s'-p'}$, as desired.

It remains to prove that the second  tensor product in \eqref{ipishort'} is highest-$\ell$-weight. We will actually prove it is simple by showing that it is a module in the conditions studied in Step 2 of \Cref{ss:p<0}. We begin by checking that its $q$-factorization graph is
\begin{equation*}
	\begin{tikzcd}
		\stackrel{r-p}{i} & \arrow[swap,l,"\tilde m"]  \stackrel{s}{j} \arrow[r,"\tilde m'"] & \stackrel{s'-p}{j'} \qquad \text{where}\qquad \tilde m = m+p \quad\text{and}\quad \tilde m'=m'+p'.
	\end{tikzcd} 
\end{equation*}
Or, equivalently,
\begin{equation}\label{newgraph'}
	\tilde m\in \mathscr R_{i,j}^{r-p,s}, \qquad \tilde m'\in\mathscr R_{j,j'}^{s,s'-p'}, \qquad\text{and}\qquad |\tilde m-\tilde m'|\notin \mathscr R_{i,j'}^{r-p,s'-p'}.
\end{equation}
The first two are clear since
\begin{equation*}
	\tilde m = (r-p)+s+d(i,j)-2\tilde p, \quad\tilde m' = s+(s'-p')+d(j,j')-2\tilde p', \quad\text{where}\qquad \tilde p=\tilde p'=0.
\end{equation*}
For the third, note
\begin{equation*}
	\tilde m-\tilde m' = m-m'+p-p'.
\end{equation*}
Therefore, by \eqref{p>0=>p>p'}, if $\tilde m\le \tilde m'$, we also have $m\le m'$ and we get
\begin{align*}
	|\tilde m- \tilde m'| & = -(\tilde m- \tilde m') = -(m-m')-p+p' \stackrel{\eqref{defp=-}}{=} r+s'+d(i,j')-2p_- -p+p'\\
	& = (r-p) + (s'-p') + d(i,j')-2(p_- - p').
\end{align*}
Using \eqref{2lbp+-} and the assumption $r\le s'$, we get
\begin{equation*}
	p_- - p' \ge \min\{r,s'\} - p' = r-p' \stackrel{\eqref{p>0=>p>p'}}{\ge} r-p \ge \min\{r-p,s'-p'\},
\end{equation*}
which completes the proof of \eqref{newgraph'} in this case. If $\tilde m>\tilde m'$, then using \eqref{e:2tp1Am} and \eqref{e:2tp1Am'} we get
\begin{align*}
	|\tilde m- \tilde m'| & = \tilde m- \tilde m' = m-m'+p-p' = (r-p) + (s'-p') + d(i,j')-2(s'-p'+d^{j'}_{i,j}).
\end{align*}
Since
\begin{equation*}
	s'-p'+d^{j'}_{i,j} \ge s'-p' \ge \min\{r-p,s'-p'\},
\end{equation*}
the proof of \eqref{newgraph'} is complete.

The remaining conditions required in Step 2 of \Cref{ss:p<0} are now easily checked. Indeed, since we have already seen that $\tilde p=\tilde p'=0$, it remains to check that $r-p\le s'-p'$ which is clear from \eqref{p>0=>p>p'} and the assumption $r\le s'$. Thus, the second  tensor product in \eqref{ipishort'} is simple as claimed.

\noindent {\bf Step 2.} We now show that $V$ is highest-$\ell$-weight if $r>s'$ and $p> p'$. Note this follows if we show
\begin{equation*}
	W:=  L_q(\bs\omega_{i,aq^{r-p+p'},p-p'})\otimes L_q(\bs\omega_{i,aq^{p'-p},r-(p-p')})\otimes  L_q(\bs\pi) \qquad\text{is highest-$\ell$-weight.}
\end{equation*}
Indeed, since $r-p+p'>p'-p$ (because $p>p'$), the tensor product of the first two factors is highest-$\ell$-weight and has $L_q(\bs\omega_{i,a,r})$ as irreducible quotient. Thus, if we show $W$ is highest-$\ell$-weight, the natural map $W\to V$ implies that so is $V$. Using \Cref{cyc}, it now suffices to show
\begin{equation}\label{Step3from2}
	L_q(\bs\omega_{i,aq^{r-p+p'},p-p'})\otimes  L_q(\bs\pi) \qquad\text{and}\qquad L_q(\bs\omega_{i,aq^{p'-p},r-(p-p')})\otimes  L_q(\bs\pi)
\end{equation}
are highest-$\ell$-weight.

For the first of these tensor products, \Cref{l:hlwquot} implies it suffices to check
\begin{equation*}
	m-(r-p+p')\notin\mathscr R_{i,j}^{p-p',s} \qquad\text{and}\qquad  (m-m')-(r-p+p')\notin\mathscr R_{i,j'}^{p-p',s'}.
\end{equation*}
Indeed,
\begin{equation*}
	m-(r-p+p') \stackrel{\eqref{e:2tp1Am}}{=} s+ d(i,j) - p -p' = (p-p')+s+d(i,j)-2p
\end{equation*}
and $p\ge p-p'\ge\min\{p-p',s\}$, while, using \eqref{e:2tp1Am'} as well, we get
\begin{align*}
	(m-m')-(r-p+p') & = d(i,j)-d(j,j') - p +p'-s'\\ & = (p-p')+s' + d(i,j') - 2(p-p'+s'+d^{j'}_{i,j})
\end{align*}
and $p-p'+s'+d^{j'}_{i,j}\ge \min\{p-p',s'\}$.

We will now check that the second tensor product in \eqref{Step3from2} satisfies the conditions either of Step 2 or of Step 2 from \Cref{ss:p<0}. This implies it is simple, thus completing the proof. Hence, we need to check that its $q$-factorization graph is
\begin{equation*}
	\begin{tikzcd}
		\stackrel{\tilde r}{i} & \arrow[swap,l,"\tilde m"]  \stackrel{s}{j} \arrow[r,"m'"] & \stackrel{s'}{j'} \qquad\text{where}\qquad \tilde m = m+p-p' \quad\text{and}\quad \tilde r=r-(p-p'),
	\end{tikzcd} 
\end{equation*}
or, equivalently,
\begin{equation}\label{newgraphStep3}
	\tilde m\in \mathscr R_{i,j}^{\tilde r,s} \qquad\text{and}\qquad |\tilde m- m'|\notin \mathscr R_{i,j'}^{\tilde r,s'}.
\end{equation}
The other requirement to satisfy either one of Steps 2 is
\begin{equation*}
	\tilde r\le s', 
\end{equation*}
which follows from \eqref{3rfP<0'}. 
For the first number in \eqref{newgraphStep3}, we have
\begin{equation*}
	\tilde m \stackrel{\eqref{e:2tp1Am}}{=} r+s+d(i,j)-p-p' = (r-(p-p')) + s + d(i,j) - 2p'.
\end{equation*}
We also know $0\le p'\le s'$ and $p<r$ which implies $p'< r-(p-p')$, as desired.\footnote{Note this last computation shows that we are in the conditions of Step 2 from \Cref{ss:p<0} if and only if $p'=0$.}
For the second, assume first $\tilde m\le m'$ which implies $m\le m'$ since $p'>p$. Then,
\begin{align*}
	|\tilde m-m'| = - (m-m') -(p-p') \stackrel{\eqref{defp=-}}{=} \tilde r + s' +d(i,j') - 2p_-
\end{align*}
and
\begin{equation*}
	p_- \stackrel{\eqref{2lbp+-}}{\ge} \min\{r,s'\}=s' \ge \min\{s',\tilde r\}.
\end{equation*}
On the other hand, if $\tilde m> m'$, \eqref{e:2tp1Am} and \eqref{e:2tp1Am'} imply
\begin{align*}
	|\tilde m-m'| & =m-m' + p-p' = r-s'+ d(i,j)-d(j,j')-p+p' \\ &= (r-(p-p'))+s'+d(i,j')-2(s'+d^{j'}_{i,j})
\end{align*}
and $s'+d^{j'}_{i,j}\ge s'\ge \min\{s',\tilde r\}$.

\noindent {\bf Step 3.} Finally, we show that $V$ is not highest-$\ell$-weight if $r>s'$ and $p=p'$.  We proceed by contradiction. More precisely, we will show using \Cref{cyc} that, if $V$ were highest-$\ell$-weight, so would be
\begin{equation*}
	T:=L_q(\bs\omega_{i,a,r})\otimes L_q(\bs\omega_{j,aq^{m-(s'-p)},s+s'-p }).
\end{equation*}
However, $m-(s'-p)\in\mathscr R_{i,j}^{r,s+s'-p}$ yielding a contradiction. Indeed,
\begin{equation*}
	m-(s'-p) \stackrel{\eqref{e:2tp1Am}}{=} r + (s+s'-p) + d(i,j) - 2s'
\end{equation*}
and $0\le s'<\min\{r,s+s'-p\}$ since $s'<r$ and $p<s$.

In order to show $T$ is highest-$\ell$-weight, it suffices to show this is true for
\begin{equation*}
	T\otimes L_q(\bs\omega_{j',aq^{m-m'},s'}).
\end{equation*}
Set
\begin{equation*}
	\bs{\tilde\pi} = \bs\omega_{j,aq^{m-(s'-p)},s+s'-p }\bs\omega_{j',aq^{m-m'},s'} = \bs\omega_{j,aq^{\tilde m},s'-p}\bs\pi, 
\end{equation*}
where
\begin{equation*}
	\tilde m = m - (s+s'-p).
\end{equation*}
We claim
\begin{equation}\label{step4s}
	L_q(\bs{\tilde\pi}) \cong L_q(\bs\omega_{j,aq^{m-(s'-p)},s+s'-p })\otimes L_q(\bs\omega_{j',aq^{m-m'},s'}),
\end{equation}
which implies
\begin{equation*}
	T\otimes L_q(\bs\omega_{j',aq^{m-m'},s'}) \cong L_q(\bs\omega_{i,a,r})\otimes L_q(\bs{\tilde\pi}),
\end{equation*}
and, hence, it suffices to show
\begin{equation*}
	\tilde V:= L_q(\bs\omega_{i,a,r})\otimes L_q(\bs{\tilde\pi}) \quad\text{is highest-$\ell$-weight.}
\end{equation*}
Finally, in order to show this, it suffices to show
\begin{equation}\label{Wishw}
	W \qquad\text{and}\qquad L_q(\bs\pi)\otimes L_q(\bs\omega_{j,aq^{\tilde m},s'-p}) \quad\text{are highest-$\ell$-weight}
\end{equation}
where 
\begin{equation*}
	W =L_q(\bs\omega_{i,a,r}) \otimes L_q(\bs\pi)\otimes L_q(\bs\omega_{j,aq^{\tilde m},s'-p}) = V\otimes  L_q(\bs\omega_{j,aq^{\tilde m},s'-p}).
\end{equation*} 
Indeed, if this is true, we obtain a natural epimorphism $ W\to \tilde V$, showing $\tilde V$ is highest-$\ell$-weight.

Note that, assuming $V$ were highest-$\ell$-weight, \Cref{cyc} implies the first claim in \eqref{Wishw} would follow from the second together with 
\begin{equation*}
	L_q(\bs\omega_{i,a,r})\otimes L_q(\bs\omega_{j,aq^{\tilde m},s'-p}) \quad\text{is highest-$\ell$-weight}.
\end{equation*}
To check the latter, note
\begin{equation*}
	\tilde m = r-s'-p+d(i,j) = r+(s'-p)+d(i,j)-2s'
\end{equation*}
and, since $r>s'$, $s'\ge s'-p=\min\{r,s'-p\}$, so $\tilde m\notin\mathscr R_{i,j}^{r,s'-p}$ as desire. As for the former,  \Cref{l:hlwquot} implies it suffices to check
\begin{equation*}
	\tilde m- m\notin\mathscr R_{j,j}^{s,s'-p} \qquad\text{and}\qquad \tilde m - (m-m')\notin\mathscr R_{j,j'}^{s'-p,s'}.
\end{equation*}
The first is clear since $\tilde m\le m$, while
\begin{align*}
	\tilde m - (m-m') & = m'-(s+s'-p) \stackrel{\eqref{e:2tp1Am'}}{=} d(j,j') - 2p' + p = d(j,j') - p\\
	& = (s'-p)+s'+d(j,j') -2s',
\end{align*}
and $s'\ge \min\{s',s'-p\}$.

It remains to check \eqref{step4s}, i.e.,
\begin{equation*}
	|m-(s'-p)-(m-m')| = |m'-s'+p|\notin\mathscr R_{j,j'}^{s+s'-p,s'}.
\end{equation*}
Indeed, $m'-s'+p=s+d(j,j')-p \ge 0$ since $p<s$, so
\begin{equation*}
	|m'-s'+p| = s+d(j,j')-p = (s+s'-p)+ s'+d(j,j')-2s', 
\end{equation*}
and $s'\ge \min\{s',s+s'-p\}=s'$.

\subsection{Reality of Trees}\label{ss:realtree} 
We start with:

\begin{proof}[Proof of \Cref{c:3aline}]
	Letting $\bs\pi = (\bs\omega_{i,a,r})^2\bs\omega_{j,aq^m,s}$, we see that $G(\bs\pi)$ is 
	\begin{equation*}
		\begin{tikzcd}
			\stackrel{r}{i} & \arrow[swap,l,"m"]  \stackrel{s}{j} \arrow[r,"m"] & \stackrel{r}{i} 
		\end{tikzcd}
	\end{equation*}
	Therefore, we need to check that \eqref{2gencond}  and \eqref{2exracond} hold under the given assumptions, i.e., $j'=i, s'=r, m'=m$. The former becomes
	\begin{equation*}
		\check h_J \in\mathscr R_{j,i,J}^{r,r} = \{2r + d(i,j)-2k: -d([i,j],\partial J)\le k< r \}.
	\end{equation*}
	If $p$ is given by \eqref{e:2tp1Am}, we have seen in \eqref{J<>p} and \eqref{Jp>0} that
	\begin{equation*}
		\check h_J = 2+d(i,j) - 2\min\{0,p\}.
	\end{equation*}
	Thus, letting $k = r-1 + \min\{0,p\}$, we have 
	\begin{equation*}
		-d([i,j],\partial J) \le \min\{0,p\} \le k <r \qquad\text{and}\qquad \check h_J = 2r+d(i,j)-2k,
	\end{equation*}
	which shows \eqref{2gencond} holds.
	
	On the other hand, \eqref{2exracond}  becomes
	\begin{equation*}
		1\notin\mathscr R_{i,i,J}^{r-1,r} =  \{2r-1 -2k: -d(i,\partial J)\le k< r-1 \} \quad\text{if}\quad r>1.
	\end{equation*}
	Since the unique solution of the equation $2r-1 -2k=1$ is $k=r-1$, we are done.
\end{proof}

Finally:

\begin{proof}[Proof of \Cref{t:realtree}]
	We proceed by induction on the number $N$ of $q$-factors of $\bs\pi$, which clearly starts if $N=1$. Thus, assume $N>1$ and recall it suffices to show $L_q(\bs\pi)\otimes L_q(\bs\pi)$ is highest-$\ell$-weight by \Cref{c:vnvstar}.
	
	Choose $\bs\omega_{i,a,r}\in\partial G$, let $\bs\omega_{j,aq^m,s}$ be the unique vertex adjacent to $\bs\omega_{i,a,r}$, and write 
	\begin{equation*}
		\bs{\pi}=\bs{\varpi}^{\prime}\bs{\omega}_{i,a,r}\bs{\omega}_{j,aq^m,s} \qquad\text{and}\qquad \bs\varpi = \bs{\varpi}^{\prime}\bs{\omega}_{j,aq^m,s}.
	\end{equation*}	
	In particular, $|m|\in\mathscr R_{i,j}^{r,s}$ and  $L_q(\bs\omega_{i,a,r})\otimes L_q(\bs\omega)$ is simple for every $q$-factor $\bs\omega$ of $\bs\varpi'$. \Cref{l:hlwquot} then implies $L_q(\bs{\omega}_{i,a,r})\otimes L_q(\bs{\varpi}^{\prime})$ is simple. 
	Note also that
	\begin{equation*}
		\bs\pi^2 = \bs\pi \bs{\omega}_{i,a,r} \bs\varpi
	\end{equation*}
	and, since $G$ is a tree, so is $G(\bs\varpi)$ by \Cref{l:elemtree}. Hence, the induction hypothesis applies to $\bs\varpi$.

	Without loss of generality, assume $m>0$, so that $L_q(\bs\omega_{i,a,r})\otimes L_q(\bs\omega_{j,aq^m,s})$ is highest-$\ell$-weight. We claim
	\begin{equation}\label{realtreehlf}
		L_q(\bs{\pi})\otimes L_q(\bs{\omega}_{i,a,r}), \qquad L_q(\bs{\pi})\otimes L_q(\bs{\varpi}), \qquad\text{and}\qquad L_q(\bs{\omega}_{i,a,r})\otimes L_q(\bs{\varpi})
	\end{equation}
	are highest-$\ell$-weight. Together with \Cref{cyc}, this implies 
	\begin{equation*}
		L_q(\bs{\pi})\otimes L_q(\bs{\omega}_{i,a,r}) \otimes L_q(\bs{\varpi})
	\end{equation*}
	is highest-$\ell$-weight and has $L_q(\bs\pi)\otimes L_q(\bs\pi)$ as a quotient, thus completing the proof.
	
	For the last tensor product in \eqref{realtreehlf}, it follows from the observations we have already made that $L_q(\bs\omega_{i,a,r})\otimes L_q(\bs\omega)$ is highest-$\ell$-weight for every $q$-factor $\bs\omega$ of $\bs\varpi$. Thus, the claim follows from \Cref{l:hlwquot} in this case. Moreover, since $L_q(\bs\varpi)$ is real by the induction hypothesis, this also implies that
	\begin{equation*}
		L_q(\bs{\omega}_{i,a,r})\otimes L_q(\bs{\varpi}) \otimes L_q(\bs{\varpi})
	\end{equation*}
	is highest-$\ell$-weight and has the middle tensor product in \eqref{realtreehlf} as a quotient. Thus, it remains to prove the claim for the first tensor product.
	
	To do that, let $\bs\varpi_+$ be the product of all $q$-factors $\bs\omega$ of $\bs\varpi'$ such that $\bs\omega \succ\bs\omega_{j,aq^m,s}$ and let
	$\bs\varpi_-$ be such that
	\begin{equation*}
		\bs\pi = \bs\varpi_+\bs{\omega}_{i,a,r}\bs\omega_{j,aq^m,s} \bs\varpi_-.
	\end{equation*}
	Consider
	\begin{equation*}
		W= L_q(\bs{\varpi}_+)\otimes L_q(\bs{\omega}_{i,a,r}\bs{\omega}_{j,aq^m,s}) \otimes L_q(\bs{\varpi}_-)\otimes L_q(\bs{\omega}_{i,a,r})
	\end{equation*}
	and note that, except for
	\begin{equation*}
		L_q(\bs{\omega}_{i,a,r}\bs{\omega}_{j,aq^m,s}) \otimes L_q(\bs{\omega}_{i,a,r}),
	\end{equation*}
	all the others $2$-fold ordered tensor products in the definition of $W$ are highest-$\ell$-weight by construction. On the other hand, this latter tensor product is simple by \Cref{c:3aline}. Therefore, $W$ is highest-$\ell$-weight and has the first tensor product in \eqref{realtreehlf} as a quotient. This completes the proof.
\end{proof}

\bibliographystyle{amsplain}

\end{document}